\documentclass[a4paper,12pt,oneside]{article}

\addtolength{\hoffset}{-1.0cm}
\addtolength{\textwidth}{2.0cm}
\addtolength{\voffset}{-1.0cm}
\addtolength{\textheight}{2.0cm}
\usepackage{graphicx}
\usepackage[all]{xy}
\usepackage[centertags]{amsmath}
\usepackage{latexsym}
\usepackage{xcolor}
\usepackage{amsfonts}
\usepackage{tikz-cd}
\usepackage{pst-node}
\usepackage{amssymb,amsthm}
\frenchspacing  \linespread{1.1}

\usepackage[english]{babel}

\usepackage{newlfont}

\usepackage{indentfirst}
\usepackage[english]{babel}
\usepackage[latin1]{inputenc}
\usepackage{newlfont}

\usepackage{indentfirst}
\usepackage[english]{babel}
\usepackage[latin1]{inputenc}
\usepackage{eufrak}
\usepackage{hyperref}

\hfuzz2pt

\newtheorem{remark}{Remark}[section]

\theoremstyle{plain}
\newtheorem{lemma}{Lemma}[section]

\newtheorem{proposition}{Proposition}[section]
\newtheorem{theorem}{Theorem}[section]
\newtheorem{definition}{Definition}[section]
\newtheorem{corollary}{Corollary}[section]

\newtheorem{example}{Example}[section]

\newcommand{\G}{\mathcal{G}}

\newcommand{\beqn}{\begin{eqnarray}}
\newcommand{\eeqn}{\end{eqnarray}}
\newcommand{\beq}{\begin{eqnarray}}
\newcommand{\eeq}{\end{eqnarray}}

\newcommand{\bpro}{\begin{proposition}}

\newcommand{\epro}{\end{proposition}}
\newcommand{\blem}{\begin{lemma}}
\newcommand{\elem}{\end{lemma}}
\newcommand{\bdfn}{\begin{definition}}
\newcommand{\edfn}{\end{definition}}
\newcommand{\bcor}{\begin{corollary}}
\newcommand{\ecor}{\end{corollary}}
\newcommand{\bthm}{\begin{theorem}}
\newcommand{\ethm}{\end{theorem}}
\newcommand{\bex}{\begin{example}}
\newcommand{\eex}{\end{example}}
\newcommand{\brmq}{\begin{remark}}
\newcommand{\ermq}{\end{remark}}
\newcommand{\benum}{\begin{enumerate}}
\newcommand{\eenum}{\end{enumerate}}
\newcommand{\bitem}{\begin{itemize}}
\newcommand{\eitem}{\end{itemize}}

\theoremstyle{plain}

\linespread{1}

\usepackage[latin1]{inputenc}

\title{Cartan-Schouten metrics for information geometry and machine learning}

  \author{ Andr\'e Diatta$^{( 1)}$\footnote{ \footnotesize \noindent (1) Aix-Marseille Univ, CNRS, Centrale Marseille, Institut Fresnel, 13013 Marseille, France.
\newline Email: andre.diatta@fresnel.fr; andrediatta@gmail.com.}
;  Bakary Manga$^{( 2)}$
  and  
Fatimata Sy$^{( 2)}$\footnote{\footnotesize \noindent(2) D\'epartement de Math\'ematiques et Informatique,
Universit\'e Cheikh Anta Diop de Dakar,
BP 5005 Dakar-Fann, Dakar, S\'en\'egal. Email: bakary.manga@ucad.edu.sn; syfatima89@gmail.com
} 
  }  

\begin{document}
\maketitle

\begin{abstract}
We study Cartan-Schouten metrics, explore invariant dual  connections, and propose them as models for Information Geometry. 
 Based on the underlying Riemannian barycenter and the biinvariant mean of Lie groups, we subsequently propose a new parametric  mean  for data science and machine learning which comes with several advantages compared to traditional tools  such as the arithmetic mean,     median, mode,  expectation,  least square method, maximum likelihood, linear regression.                         
We call a metric on a Lie group, a Cartan-Schouten metric, if its Levi-Civita connection is biinvariant, so  every 1-parameter subgroup through the unit is a geodesic. 
 Except for not being left or right invariant in general, Cartan-Schouten metrics enjoy the same geometry as biinvariant metrics, since they share the same Levi-Civita connection. 
To bypass the non-invariance apparent drawback, we show that Cartan-Schouten metrics are completely determined by their value at the unit. We give an explicit formula for recovering them from their value at the unit, thus making them much less computationally demanding, compared to general metrics on manifolds. Furthermore,  Lie groups with Cartan-Schouten metrics are complete Riemannian or pseudo-Riemannian manifolds.   We give a   complete characterization of Lie groups with Riemannian or Lorentzian Cartan-Schouten metrics.                                                     
Cartan-Schouten metrics  are in abundance on 2-nilpotent Lie groups. Namely,  on every 2-nilpotent Lie group, there is a 1-1 correspondence between the set of left invariant metrics and that of Cartan-Schouten metrics. 
\end{abstract}

\noindent
{\bf Keywords:}  Cartan-Schouten metric, dual  connections,   $\alpha$-connections, Fisher information matrix, biinvariant metric,  Lorentz metric, exponential  barycenter, center of mass, Amari-Chentov 3-tenor, Machine Learning, information geometry.

\tableofcontents

\section{Introduction}

Information geometry is a field that sets the geometric framework for a deeper  understanding of information theory. It applies the concepts and techniques of differential geometry to statistics and probability theory.  Namely,
 families of probability distributions are studied as Riemannian manifolds (the Riemannian metric, say $\mu,$ being for instance the Fisher information metric) provided with some additional structure, a connection such that the covariant derivative of the Riemannian  metric is a totally symmetric $3$-tensor                          
(the Amari-Chentov tensor). Then a parameter family of pairs of connections $(\nabla^{\alpha},\nabla^{-\alpha})$ which are mutually dual with respect to the metric can be deduced such that the covariant derivatives  $\nabla^{\alpha}\mu,$ $\nabla^{-\alpha}\mu$      are
totally symmetric $3$-tensors. Conversely any totally symmetric $3$-tensor gives rise to such connections. See Section \ref{sect:fisher}.   

Generalizing the  above,  a statistical manifold is a Riemannian or pseudo-Riemannian manifold $(M,\mu)$, together with a (locally) flat torsion free linear connection
    $\nabla^1$ such that the covariant derivative $\nabla^1\mu=:S$ is a totally symmetric $(0,3)$-tensor. There is a unique connection $\nabla^{-1}$ dual to $\nabla^1$ with respect  to $\mu.$                         
                  The mutual duality  is equivalent to the following 
\beqn
Z\cdot\mu(X,Y) = \mu(\nabla_Z^1X,Y) + \mu(X,\nabla_Z^{-1} Y),
\eeqn for any vector fields $X,Y$ on $M.$ 
Letting $\nabla$  stand for the Levi-Civita connection of $\mu,$ one deduces a parameter family of pairs of mutually dual torsion free (locally) flat connections $(\nabla^{\alpha},\nabla^{-\alpha}),$ $\alpha\in\mathbb R,$ by
\beqn
\mu(\nabla_X^1Y,Z)&:=&\mu(\nabla_XY,Z)-\frac{1}{2}S(X,Y,Z)\nonumber\\
\mu(\nabla_X^{-1}Y,Z)&:=&\mu(\nabla_XY,Z)+\frac{1}{2}S(X,Y,Z)
\label{duality-3-tensor}
 \eeqn 
such that the covariant derivatives  $\nabla^{\alpha}\mu=-\nabla^{-\alpha}\mu$   are totally symmetric \cite{lauritzen87}, \cite{Matsuzoe2007}.   Such a generalization is not a mere mathematical theory, since it has indeed been proved that  every statistical manifold corresponds to a statistical model  \cite{van-le2005,matumoto1993}.             
The notion of dual connections, which was borrowed from  affine geometry (see e.g.\cite{lauritzen87}, \cite{Matsuzoe2010})  and introduced into information geometry by Amari \cite{amari85}, is now widely used as a  key tool in the applications of information geometry.
While metrics such as the Fisher information metric and divergences like the Kullback-Liebler divergence provide ways of measuring distances or differences between different probability distributions, geodesics appear as the shortest distances between probability distributions. All these tools strengthen the central role of information geometry in data science (see e.g.\cite{amaribook2016,amari85}).

In the present work, we study Cartan-Schouten metrics and propose them as models for information geometry.
A Cartan-Schouten metric on a Lie group $G$, is a metric whose Levi-Civita connection is the canonical biinvariant  Cartan-Schouten connection  \cite{Cartan-Schouten}, defined
by $\nabla_xy=\frac{1}{2} [x,y],$ for any $x,y$ in the Lie algebra $\mathcal G$ of G. So  every 1-parameter subgroup through the unit is a geodesic. 
Lie groups  with  Cartan Schouten metrics are geodesically (hence metrically) complete manifolds as well as realizing the 1-parameter subgroups as geodesics, which allows for a natural way of doing statistics and information geometry, using their intrinsic connection (just as is the case for biinvariant  metrics, see \cite{pennec1,miolane-pennec2015,pennec-biinvariant-means}).  In low dimension, Lie groups with a Cartan-Schouten metric have been classified, up to dimension 6, in \cite{bi-invariant-and-noninvariant-metrics-ghanam-hindeleh-thompson, rawashdeh-thompson, thompson3D}. 
The study of Cartan-Schouten metrics  is also motivated by the use of Riemannian statistics on Lie groups in the traditional fields such as  Machine Learning, Anatomy   
(\cite{miolane-pennec2015}, \cite{pennec1}).
Here are the main results and organization of the paper. In Section \ref{chap:Onstatistical-structures-on-groups},  we delve  into the  discussions on statistical manifolds and information geometry in details, directly deriving the Fisher information metric, dual connections, etc. We also set up  the framework, the definitions and discuss the motivations.                                                                      
Another interesting feature is that, although Cartan-Schouten metrics are in general not left nor right invariant, their geometry is biinvariant, since it is conveyed by their  Levi-Civita connection which is biinvariant. More interestingly, we provide a formula that uses the sole value of any Cartan-Schouten metric at the  unit (neutral element) to give its value everywhere, just like for left (or right) invariant metrics (see Theorem \ref{theo:exp-diffeo} and  Theorem \ref{theo:2-nilpotent}). This allows to bypass the noninvariance drawback and makes Cartan-Schouten metrics computationally  more attractive.
In Theorem \ref{theo:Riemann-Cartan-Schouten}, we describe solvable non-Abelian Lie groups with a Riemannian Cartan-Schouten metric. More
precisely we prove that a solvable non-Abelian Lie group admits a positive (or negative) definite Cartan-Schouten metric if
and only if it is 2-nilpotent. Section \ref{sec:Lorentzian-Carten-Schouten} is devoted to the discussion and complete characterization of Lie groups with a Cartan-Schouten metric $\mu$ of Lorentz type   (Lorentzian, for short), that is, $\mu$ is of signature $(1,n-1).$  Unlike biinvariant Lorentzian metrics which are      rather scarce (roughly speaking, they only exit on oscillator groups and on the special linear group SL(2) \cite{medina85}), Lie groups with Lorentzian Cartan-Schouten metrics are in abundance (see  Theorem \ref{thm:lorentz-Cartan-Schouten}, Theorem \ref{thm:metrics-oscilator} and   Theorem \ref{sect:2-nilpotent}).           
We expect Lorentzian Cartan-Schouten metrics studied in Section \ref{sec:Lorentzian-Carten-Schouten} to offer good models for singular statistical learning theory as in \cite{watanabe2009}, and  applications to relativity or quantum physics. 
 In any 2-nilpotent Lie group, Cartan-Schouten metrics can have any desired signature, and in fact, they are as many 
as metrics (of any signature) on the corresponding Lie algebra, see Theorem  \ref{theo:SetOfCartan-SchoutenMetrics} and Theorem \ref{theo:2-nilpotent}. And yet, all these Cartan-Schouten metrics share the same Levi-Civita connection and hence the same geometry. However, they may infer different  statistics, since for a fixed totally symmetric 3-tensor taken to be the Amari-Chentsov tensor, the corresponding parameter family of dual $\alpha$-connections $(\nabla^{\alpha},\nabla^{-\alpha})$, depends of the chosen Cartan-Schouten metric. When the connection coincides with the Levi-Civita connection of the metric, then the information geometry coincides with the Riemannian statistics   (or  statistics in Riemannian manifolds), which is a field in mathematics.              
In Theorem \ref{thm:general-compatible-metric-on-h-type-group} we describe all Cartan-Schouten metrics on H-type Carnot groups and supply their explicit expressions. 
Note that in 2-nilpotent Lie groups, Cartan-Schouten metrics have very simple expressions. Indeed, in global affine (exponential) coordinates, all their coefficients are polynomials of degree 2.                
 We explore and describe left invariant and biinvariant dual connections  and statistics   in Section \ref{sect:biinvariant-dual-connections-statistics}, see                                                              
Proposition \ref{prop:left-invariant-Cartan-Schouten-statistics}, Proposition  \ref{prop:biinvariantconnections}, Theorem \ref{theo:biinvariant-connections}.
In Section \ref{sect:newmodel}, we propose a new model of  parametric  mean for information geometry, data  science and machine learning. On the one hand, it is the common Riemannian barycenter of all the Cartan-Schouten metrics on Carnot groups of H-type.
On the other hand, it is also the  biinvariant exponential barycenter of  those Carnot groups of H-type. So it is at  the interplay between Riemannian statistics and Lie group invariant statistic. Furthermore, it combines the arithmetic mean and the variance,
and enjoys   a manifold  of parameters  to choose from.                                      

We expect  this work to  foster news  routes  of  research and  applications in several areas of science and technology,   within the scope  of applications of    Information     Geometry.                        
                              
Throughout the present work, unless  explicitly stated, the word metric refers to both Riemannian and pseudo-Riemannian  metrics.     We will let $\delta_{j,k}$  stand  for the  Kronecker symbol, with $\delta_{j,k}=1$ if $k=j$ and $0$ otherwise. Unless otherwise explicitly stated, $\nabla$ often stands for the canonical Cartan-Schouten connection, with $\nabla_xy:=\frac{1}{2}[x,y]$ for                  $x,y$ in   the Lie algebra at hand.

\section{On Information  Geometry  on Lie groups}\label{chap:Onstatistical-structures-on-groups}
\subsection{Fisher information metric,  Amari-Chentsov 3-tensor, $\alpha$-connections\label{sect:fisher}}
Let  $\Omega$ be a mesurable subset of  $ \mathbb R^m$ and $ \mathcal U$ a domain in $ \mathbb R^n.$  In $\Omega$, 
 consider a  familly of probabilities $p(-,\theta):\Omega \to \mathbb R,$ parametrized by
$\theta=(\theta_1,\dots\theta_n)\in\mathcal U$, such that the following hold :
(1) the family $p(-,\theta)$ is smooth with respect to $\theta,$
(2) $p(x,\theta)> 0,$ for any $x\in\Omega$ and any $\theta\in \mathcal U$  and (3) $\int_{\Omega}p(x,\theta)dx=1,$ for any $\theta\in \mathcal U.$ Here we have let   $\int_{\Omega}p(x,\theta)dx$ also stand for the  sum  $\sum\limits_{x\in\Omega}p(x,\theta)$ when  $\Omega$ is a discrete set.  
The Fisher information matrix associated to the familly $p(-,\theta)$ is the symmetric matrix $[\mu_{i,j}(\theta)]$ 
given by 
\beqn \label{Ficher-metric} \mu_{ij}(\theta)&:=&\int_{\Omega}\Big(\frac{\partial}{\partial \theta_i} \log p(x,\theta) \Big)\Big( \frac{\partial}{\partial \theta_j} \log p(x,\theta)\Big	)p(x,\theta) dx\nonumber\\
&=&E_{\theta}\Big(\frac{\partial}{\partial \theta_i} \log p(x,\theta) \;\frac{\partial}{\partial \theta_j} \log p(x,\theta)\Big	)\;,\eeqn
where  $E_{\theta}(f):= \int_{\Omega}fp(x,\theta) dx$ 
is the expectation of $f:\Omega\to  \mathbb R,$ with respect to $p(-,\theta).$

One notes that the  matrix $[\mu_{i,j}(\theta)]$ is positive semi-definite, since for any $Y=(y_1,\cdots,y_n)$ in $\mathbb  R^n$, one has: 
\beqn \label{semi-definite} Y[\mu_{i,j}(\theta)]Y^T&=&\sum\limits_{i,j=1}^n\mu_{ij}(\theta)y_iy_j\nonumber\\
&=&\int_{\Omega}\Big(\sum\limits_{i=1}^ny_i\frac{\partial}{\partial \theta_i} \log p(x,\theta) \Big)\Big( \sum\limits_{j=1}^ny_j\frac{\partial}{\partial \theta_j} \log p(x,\theta)\Big	)p(x,\theta) dx\nonumber\\
&=&E_{\theta}\Big(\sum\limits_{i}y_i\frac{\partial}{\partial \theta_i} \log p(x,\theta) \Big	)^2\ge 0\;.\eeqn

Hence $ \mu_{ij}(\theta)$ gives rise to a (possibly  pseudo-) Riemannian metric $\mu$ defined as $ \mu(x,y)= X[\mu_{i,j}(\theta)]Y^T,$ for any tangent vector field $x,y$ with (local) components $X,Y\in\mathbb R^n$. The metric (\ref{Ficher-metric}) is called the Fisher metric on $\mathcal M$, when it is definite positive \cite{amari85}, \cite{lauritzen87}, \cite{Matsuzoe2010}.

We further suppose that the derivation with respect to $\theta$ and the integration with respect to $x$ commute.  The latter property is always satisfied under a few conditions. Differentiating the equality $\int_{\Omega}p(x,\theta)dx=1,$  gives
\beqn 0&=&\frac{\partial}{\partial \theta_i}\int_{\Omega}p(x,\theta)dx=\int_{\Omega}\frac{\partial}{\partial \theta_i}p(x,\theta)dx =
\int_{\Omega}\Big(\frac{\partial}{\partial \theta_i}\log p(x,\theta)\Big)p(x,\theta)dx \nonumber\\
&=&E_{\theta}(\frac{\partial}{\partial \theta_i}\log p(x,\theta)).
\eeqn
The second derivatives lead to
\beqn 0&=&\frac{\partial^2}{\partial \theta_j\partial \theta_i}\int_{\Omega}p(x,\theta)dx=
\int_{\Omega}\Big(\frac{\partial^2}{\partial \theta_j\partial \theta_i}\log p(x,\theta)\Big)p(x,\theta)dx\nonumber\\
&+& \int_{\Omega}\Big(\frac{\partial}{\partial \theta_j}\log p(x,\theta) \; \frac{\partial}{\partial \theta_i}\log p(x,\theta)\Big)p(x,\theta)dx
\nonumber\\
&=&E_{\theta}\Big(\frac{\partial^2}{\partial \theta_j\partial \theta_i}\log p(x,\theta)\Big)+ \mu_{ij}(\theta).
\eeqn
Hence, one gets the following
\beqn
 \mu_{ij}(\theta)=-E_{\theta}\Big(\frac{\partial^2}{\partial \theta_j\partial \theta_i}\log p(x,\theta)\Big).
\eeqn
Differentiating (\ref{Ficher-metric}) with respect to $\theta_k,$ one gets

\beqn \label{Ficher-metric1} \frac{\partial}{\theta_k}\mu_{ij}(\theta)&:=&\int_{\Omega}\Big(\frac{\partial^2}{\partial \theta_k\partial \theta_i} \log p(x,\theta) \Big)\Big( \frac{\partial}{\partial \theta_j} \log p(x,\theta)\Big)p(x,\theta) dx\nonumber\\
&&+
\int_{\Omega}\Big(\frac{\partial}{\partial \theta_i} \log p(x,\theta) \Big)\Big( \frac{\partial^2}{\partial \theta_k\partial \theta_j} \log p(x,\theta)\Big)p(x,\theta) dx\nonumber\\
&&+
\int_{\Omega}\Big(\frac{\partial}{\partial \theta_i} \log p(x,\theta) \Big)\Big( \frac{\partial}{\partial \theta_j} \log p(x,\theta)\Big)\Big(
 \frac{\partial}{\partial \theta_k} \log p(x,\theta) \Big) p(x,\theta)dx .
\eeqn

Now we re-write (\ref{Ficher-metric1}) as 
\beqn \label{Ficher-metric2} \frac{\partial}{\theta_k}\mu_{ij}(\theta)&=&E_{\theta}\Big(\frac{\partial^2}{\partial\theta_k\partial \theta_i} \log p(x,\theta) \;\frac{\partial}{\partial \theta_j} \log p(x,\theta)\Big)
\nonumber\\
&&+
E_{\theta}\Big(\frac{\partial}{\partial \theta_i} \log p(x,\theta)  \frac{\partial^2}{\partial \theta_k\partial \theta_j} \log p(x,\theta)\Big) +S_{i,jk}(\theta)
,\eeqn
where $S$ stands for the Amari-Chentsov symmetric 3-tensor, with coefficients
\beqn 
S_{i,jk}(\theta):=E_{\theta}\Big(\frac{\partial}{\partial \theta_i} \log p(x,\theta)  \frac{\partial}{\partial \theta_j} \log p(x,\theta)
 \frac{\partial}{\partial \theta_k} \log p(x,\theta) \Big)\;.
\eeqn

Bringing Levi-Civita connection $\nabla$ of $\mu$ into play, we use the formula
\beqn \label{Ricci1} \frac{\partial}{\partial \theta_i}\mu_{jk}&=&\mu(\nabla_{\frac{\partial}{\partial \theta_i}}\frac{\partial}{\partial \theta_j},\frac{\partial}{\partial \theta_k})+\mu(\frac{\partial}{\partial \theta_j},\nabla_{\frac{\partial}{\partial \theta_i}}\frac{\partial}{\partial \theta_k})
,
\eeqn
to deduce  the following
\beqn \label{civita3}\frac{\partial}{\partial \theta_i}\mu_{jk}+\frac{\partial}{\partial \theta_j}\mu_{ik}-\frac{\partial}{\partial \theta_k}\mu_{ij}&=&2\mu(\nabla_{\frac{\partial}{\partial \theta_i}}\frac{\partial}{\partial \theta_j},\frac{\partial}{\partial \theta_k}).
\eeqn
Plugging (\ref{Ficher-metric2}) into the left hand side of (\ref{civita3}) we get 
\beqn \label{civita4}  E_{\theta}\Big(\frac{\partial^2}{\partial\theta_k\partial \theta_i} \log p(x,\theta) \;\frac{\partial}{\partial \theta_j} \log p(x,\theta)\Big)= \mu(\nabla_{\frac{\partial}{\partial \theta_i}}\frac{\partial}{\partial \theta_j},\frac{\partial}{\partial \theta_k})
-\frac{1}{2}S_{ijk}.
\eeqn
The right hand side of (\ref{civita4}) defines a connection $\nabla^1$ by  
\beqn \label{alpha-connections1}  \mu(\nabla_{\frac{\partial}{\partial \theta_i}}^{1}\frac{\partial}{\partial \theta_j},\frac{\partial}{\partial \theta_k})= \mu(\nabla_{\frac{\partial}{\partial \theta_i}}\frac{\partial}{\partial \theta_j},\frac{\partial}{\partial \theta_k})
-\frac{1}{2}S_{ijk},
\eeqn
$i,j,k=1,\dots,n.$ Since $S$ is symmetric, $\nabla^1$ is torsion-free. The torsion-free connection $\nabla^{-1}$ defined by  
\beqn \label{alpha-connections-m1}  \mu(\nabla_{\frac{\partial}{\partial \theta_i}}^{-1}\frac{\partial}{\partial \theta_j},\frac{\partial}{\partial \theta_k})= \mu(\nabla_{\frac{\partial}{\partial \theta_i}}\frac{\partial}{\partial \theta_j},\frac{\partial}{\partial \theta_k})
+\frac{1}{2}S_{ijk},
\eeqn
satisfies
\beqn \label{alpha-connections1m1}  \mu(\nabla_{\frac{\partial}{\partial \theta_i}}^{1}\frac{\partial}{\partial \theta_j},\frac{\partial}{\partial \theta_k})+
\mu(\nabla_{\frac{\partial}{\partial \theta_i}}^{-1}\frac{\partial}{\partial \theta_j},\frac{\partial}{\partial \theta_k})
= 2\mu(\nabla_{\frac{\partial}{\partial \theta_i}}\frac{\partial}{\partial \theta_j},\frac{\partial}{\partial \theta_k}) 
\eeqn
and
\beqn \label{alpha-connections1m1b}  \mu(\nabla_{\frac{\partial}{\partial \theta_i}}^{1}\frac{\partial}{\partial \theta_j},\frac{\partial}{\partial \theta_k})+
\mu(\frac{\partial}{\partial \theta_j},\nabla_{\frac{\partial}{\partial \theta_i}}^{-1}\frac{\partial}{\partial \theta_k})&=&
 \mu(\nabla_{\frac{\partial}{\partial \theta_i}}\frac{\partial}{\partial \theta_j},\frac{\partial}{\partial \theta_k})+
\mu(\frac{\partial}{\partial \theta_j},\nabla_{\frac{\partial}{\partial \theta_i}}\frac{\partial}{\partial \theta_k})\nonumber\\
&=&\frac{\partial}{\partial \theta_i}\mu_{jk}\;.
\eeqn
Equality (\ref{alpha-connections1m1}) is equivalent to 
\beqn \label{alpha-connections2a} \nabla= \frac{1}{2}(\nabla^{1}+\nabla^{-1}) 
\eeqn
whereas
Equality (\ref{alpha-connections1m1b}) is equivalent to 
\beqn \label{alpha-connections2b}X\cdot\mu(Y,Z)=  \mu(\nabla_{X}^{1}Y,Z) +  \mu(Y,\nabla_{X}^{-1}Z)\;,
\eeqn
for any vector fields $X,Y,Z$ on $\mathcal M.$
More generally, the torsion-free connections $\nabla^{\alpha},$ $\nabla^{-\alpha},$ $\alpha\in\mathbb  R,$  defined  for any $i,j,k=1,\dots,n,$ by 
\beqn \label{alpha-connections2}  \mu(\nabla_{\frac{\partial}{\partial \theta_i}}^{\alpha}\frac{\partial}{\partial \theta_j},\frac{\partial}{\partial \theta_k})&=& \mu(\nabla_{\frac{\partial}{\partial \theta_i}}\frac{\partial}{\partial \theta_j},\frac{\partial}{\partial \theta_k})
-\frac{\alpha}{2}S_{ijk}\;,\nonumber
\\
  \mu(\nabla_{\frac{\partial}{\partial \theta_i}}^{-\alpha}\frac{\partial}{\partial \theta_j},\frac{\partial}{\partial \theta_k})&=& \mu(\nabla_{\frac{\partial}{\partial \theta_i}}\frac{\partial}{\partial \theta_j},\frac{\partial}{\partial \theta_k})
+\frac{\alpha}{2}S_{ijk}
\eeqn
are re mutually dual with respect to $\mu,$ that is,
\beqn \label{alpha-connections3}X\cdot\mu(Y,Z)=  \mu(\nabla_{X}^{\alpha}Y,Z) +  \mu(Y,\nabla_{X}^{-\alpha}Z)\;,
\eeqn
 for any vector fields $X,Y,Z$ on $\mathcal M,$ and they satisfy $\nabla= \frac{1}{2}(\nabla^{\alpha}+\nabla^{-\alpha}), $  for any $\alpha\in\mathbb R.$
  Furthermore, the $3$-tensors $\nabla^{\alpha} \mu$  and $\nabla^{-\alpha} \mu$ are totally symmetric. More precisely, we have the following identities 
\beqn \nabla^{\alpha}\mu(X,Y,Z)&=&X\cdot\mu(Y,Z)-\mu(\nabla_{X}^{\alpha}Y,Z) -  \mu(Y,\nabla_{X}^{\alpha}Z)\nonumber\\
&=&X\cdot\mu(Y,Z)-\Big(\mu(\nabla_{X}Y,Z)- \frac{\alpha}{2}S(X,Y,Z)\Big)\nonumber\\
&&-\Big(\mu(Y,\nabla_{X}Z)- \frac{\alpha}{2}S(X,Y,Z)\Big) =\alpha S(X,Y,Z)
\eeqn
and 
\beqn \nabla^{-\alpha}\mu(X,Y,Z)&=&X\cdot\mu(Y,Z)-\mu(\nabla_{X}^{-\alpha}Y,Z) -  \mu(Y,\nabla_{X}^{-\alpha}Z)\nonumber\\
&=&X\cdot\mu(Y,Z)-\Big(\mu(\nabla_{X}Y,Z)+ \frac{\alpha}{2}S(X,Y,Z)\Big)\nonumber\\
&&-\Big(\mu(Y,\nabla_{X}Z)+ \frac{\alpha}{2}S(X,Y,Z)\Big) =-\alpha S(X,Y,Z)\;.
\eeqn

Along these lines, one defines a statistical manifold as a triplet $(M,\mu,\nabla^1)$  where $M$ is a differential manifold, $\mu$ and $\nabla^1$
 are respectively a Riemannian or pseudo-Riemannian metric and a torsion free linear connection on $M$ such that the covariant derivative
 $\nabla^1 \mu$  of $\mu$ with respect to $\nabla^1,$ is totally
symmetric.  The pair $(\mu,\nabla^1)$ is then also termed a 
Codazzi structure or a statistics on $M,$ while $\mu$ and $\nabla^1$ are said to be compatible. The totally symmetric  $3$-tensor $S:=\nabla^1\mu$  is termed the Amari-Chentsov tensor. There is a unique torsion free  affine connection $\nabla^{-1}$ on $M,$ such that 
\beqn\label{dual-connections}
X\cdot\mu(Y,Z)=\mu(\nabla_X^{1}Y,Z)+\mu(Y,\nabla_X^{-1}Z)\;,\eeqn
 for any vector fields  $X,Y,Z\in\mathfrak X(M)$. Two connections  $\nabla^1$ and $\nabla^{-1}$ satisfying (\ref{dual-connections}), are said to be mutually dual (or just dual, for short) with respect to $\mu.$ 
Conversely, in a (pseudo-) Riemannian manifold ($M,\mu$) with Levi-Civita (LC, for short) connection $\nabla,$ consider a totally symmetric covariant $3$-tensor $S$. The tensor field $A$  defined by 
\beqn
\mu(A(X,Y),Z):=S(X,Y,Z), 
\eeqn
for any $X,Y,Z\in\mathfrak X(M),$
 gives rise to the two torsion free linear connections $\nabla^1:=\nabla-\frac{1}{2} A$  and $\nabla^{-1}:=\nabla+\frac{1}{2} A$, which satisfy (\ref{dual-connections}) and $\nabla^1\mu=-\nabla^{-1}\mu=S$. 
 This thus establishes an equivalence between statistical structures and totally symmetric $3$-tensors on M.  One defines a parameter family of  manifolds, the $\alpha$-manifolds $(M,\mu,\nabla^\alpha,\nabla^{-\alpha}),$ where the $\alpha$-connections are given by
\beqn
\mu(\nabla_X^\alpha Y,Z)&=&\mu(\nabla_X Y,Z)-\frac{\alpha}{2}S(X,Y,Z)\;,\nonumber\\
 \mu(\nabla_X^{-\alpha} Y,Z)&=&\mu(\nabla_X Y,Z)+\frac{\alpha}{2}S(X,Y,Z),  
\eeqn
or equivalently 
\beqn  \nabla^\alpha=\nabla-\frac{\alpha}{2} A\;,\;\nabla^{-\alpha}=\nabla+\frac{\alpha}{2} A\;,
\eeqn
for any $X,Y,Z\in \mathfrak X(M).$ The connections $\nabla^\alpha$ and $\nabla^{-\alpha}$ are dual with respect to $\nabla.$
So another equivalent definition of a statistical manifold, is a Riemannian (or more generally pseudo-Riemannian)  manifold, together with a totally symmetric tensor $S.$

 In this generalization, totally geodesic submanifolds now play the role of affine
subspaces of Eulidean space in standard statistical models, while geodesics replace straight lines.
It has been proved (\cite{van-le2005,matumoto1993}) that  every statistical manifold is indeed a statistical model.       
\subsection{Information  geometry using  Cartan-Schouten metrics}
Let us remind that Cartan-Schouten connections on a Lie group $G,$ are the left invariant connections such that every $1$-parameter subgroup of $G$ through the identity $\epsilon$ is a geodesic. 
The classical  $+$, $-$ and $0$ Cartan-Schouten connections (\cite{Cartan-Schouten}) are respectively given in the Lie algebra $\mathcal G$ of $G$ by
\beqn\label{Cartan-connections} \nabla_x y=\lambda [x,y], \;\; \lambda=1,0,\frac{1}{2}, \;\;\forall \; x,y\in\mathcal G\;.
\eeqn
\begin{definition} 
We will refer to the $0$-connection given by  $\nabla_xy=\frac{1}{2} [x,y],$ $\forall \; x,y\in\mathcal G,$  as the Cartan-Schouten canonical connection. It is the unique symmetric (torsion free) Cartan-Schouten which is  bi-invariant.
\end{definition}

Our models of statistical manifolds of interest are couples ($M, \mu$),  
where $M:=G$ is a Lie group and $\mu$ is a Riemannian or pseudo-Riemannian metric on $G$,
 which is covariantly constant (or equivalently, parallel) with respect to the Cartan-Schouten 
canonical connection $\nabla$. The latter property simply reads $\nabla \mu=0$  or, equivalently,
\beqn \label{eq:parallel} x^+\cdot\mu(y^+,z^+)=\frac{1}{2}\Big(\mu([x^+,y^+],z^+)+\mu(y^+,[x^+,z^+])\Big) 
\eeqn
for any left invariant vector fields $x^+,y^+,z^+$ on $G.$ Importantly, when we take the dual connections to coincide with $\nabla,$ then the model coincides with both the (pseudo-) Riemannian Statistics and Lie group (bi)invariant statistics  as in \cite{miolane-pennec2015}, \cite{pennec1}, \cite{pennec2}.  

\begin{definition} \label{def:Cartan-Schouten-Metrics} If a metric  $\mu$ on a Lie Group, is parallel with respect to the Cartan-Schouten canonical connection, that is $\nabla\mu=0$, then we call it a Cartan-Schouten metric. Equivalently, a Cartan-Schouten metric is a metric whose Levi-Civita connection is the (biinvariant) Cartan-Schouten 
canonical connection $\nabla.$   
\end{definition}

Note that, although the two approaches of defining a statistical manifold given above are equivalent, 
in our case the choice of one may influence the direction of the study in the following way.
Indeed, since Cartan-Schouten metrics are not left nor right invariant in general, fixing a left invariant Amari-Chentsov
 3-tensor induces in general a non-invariant pair of dual connections which thus come
 with a non-invariant underlying statistical model.
In contrast, although it infers a non-invariant 3-tensor, the choice of a left invariant or biinvariant
 connection compatible with $\mu,$ leads to an invariant statistical model.

\subsection{Some advantages and more motivations}
As mentioned in the introduction, every  $1$-parameter subgroup through the unit is a geodesic. 
 Cartan Schouten metrics are geodesically (hence metrically) complete manifolds. So they are a  nice model for Riemannian statistics.
 Their geometries are invariant, since they are  conveyed by the biinvariant symmetric connection. 
 The geodesics of $\nabla$ are translates of one parameter subgroups,  left or right-invariant vector fields are Killing vector fields or affine collineation of
the connection $\nabla$. It has also been proposed, see e.g. \cite{miolane-pennec2015}, \cite{pennec1}, that Riemannian or pseudo-Riemannian metrics which are  biinvariant be used for Riemannian statistics on Lie groups. Cartan-Schouten metrics are a generalization of biinvariant metrics where the invariance assumption on the metric has been dropped. Since biinvariant metrics and Cartan-Shouten metrics share the same Levi-Civita connection (which is biinvariant), they thus share the same geometry. Their common (pseudo-) Riemannian mean coincides  with the Lie group biinvariant exponential barycenter. Thus
 Cartan-Schouten metrics allow for a much larger familly of Lie groups with a biinvariant geometry, as we shall see throughout this work.
 For example, Heisenberg groups $\mathbb H_{2n+1}$, $n\ge 1,$  carry infinitely many non-equivalent Cartan-Schouten metrics, although they do not carry any biinvariant metric (see Example \ref{Heisenberg}).
 Interestingly, those metrics can have any signature, and in fact, there are as many 
Cartan-Schouten metrics in any 2-nilpotent Lie group than metrics (of any signature) on its Lie algebra.
Another interesting geometric feature is that, there are infinitely many non isometric Cartan Schouten metrics that all share the same geometry (same Riemannian curvature, same holonomy group, same geodesics, same Ricci curvature, same sectional  curvature) and yet have different signatures. As a matter of fact, such a set of metrics covers all possible signatures $(p,q),$ with no restriction on the the integers $p,q\ge 0$ with $p+q=n.$ 
Cartan-Schouten metrics of Lorentz type studied in Section \ref{sec:Lorentzian-Carten-Schouten}  may also be of good interest for singular statistical learning theory (\cite{watanabe2009}) and applications to relativity and quantum physics.     

\subsection{Hypersurfaces, totally geodesic submanifolds}
In standard statistical models, affine spaces play important roles, for example in dimension reduction techniques (principal component analysis and factor analysis), linear models, optimization problems, etc.   
 In the generalization to Riemannian and pseudo-Riemannian statistics, totally geodesic submanifolds now play the role of affine
subspaces of Eulidean space in standard statistical models, while geodesics replace straight lines.
In that context, geodesics certainly play a crucial role, since they represent the most natural path between probability distributions. A geodesic submanifold $\tilde M$  of a manifold $(M,\nabla)$ endowed with a connection, is a submanifold such that $\nabla$ preserves  tangent vectors, so $\nabla$ restricts to a  connection in  $\tilde M.$ Thus, any geodesic of  $(M,\nabla)$  passing through a point of  $\tilde M$ stays entirely in $\tilde M,$  any geodesic in  $\tilde M$ is also a geodesic of $M.$ In other words, the geodesic equation of $M$ restricts to the geodesic  equation of $\tilde M.$    Geodesic submanifolds can  represent simpler constrained models,  the  subset of uncorrelated variables in the multivariate gaussian distributions. Totally geodesic submanifold  also provide a natural way of dimensionality reduction of parameter space. 

In  our models, totally geodesic submanifolds are closed subgroups. Non-degenerate subgroup inherit the Cartan-Schouten structure by restriction of the metric. This allows for a natural dimension reduction.

\section{Cartan-Schouten metrics on Lie groups}

\subsection{General results}
The following result from \cite{diatta-manga-sy-NLAGA} covers most of nonsolvable Lie groups, including  all semi-simple Lie groups and most of non-decomposable nonsolvable Lie groups.                                                                                                        
\begin{theorem} \label{Theorem:parallel-metric-perfect-Lie-groups}\cite{diatta-manga-sy-NLAGA} Let $G$ be a perfect Lie group, that is, its Lie algebra $\mathcal G$  satisfies $[\mathcal G,\mathcal G]=\mathcal G.$  
If $G$ has a Cartan-Schouten metric $\mu$, then $\mu$ is necessarily biinvariant.
 \end{theorem}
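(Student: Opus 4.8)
The plan is to reduce the statement to a purely algebraic fact about the value of $\mu$ at the unit $\epsilon$, and then to feed in perfectness at the very last step. Write $B:=\mu_\epsilon$ for the inner product induced on $\mathcal{G}=T_\epsilon G$. By Definition \ref{def:Cartan-Schouten-Metrics} the Levi-Civita connection of $\mu$ is the canonical connection $\nabla$, so $\nabla\mu=0$, which on left invariant fields is exactly \eqref{eq:parallel}:
\[
x^+\cdot\mu(y^+,z^+)=\tfrac12\big(\mu([x,y]^+,z^+)+\mu(y^+,[x,z]^+)\big).
\]
The claim is equivalent to showing that $B$ is $\mathrm{ad}$-invariant, i.e. $B([w,y],z)+B(y,[w,z])=0$ for all $w,y,z\in\mathcal{G}$. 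Indeed, by the value-at-the-unit recovery formula (Theorem \ref{theo:exp-diffeo}) the left-invariant-frame expression of $\mu$ at $\exp x$ is $B\big(e^{\frac12\mathrm{ad}_x}\,\cdot\,,\,e^{\frac12\mathrm{ad}_x}\,\cdot\,\big)$; if $B$ is $\mathrm{ad}$-invariant then each $e^{\frac12\mathrm{ad}_x}$ is a $B$-isometry, this expression collapses to $B$, and $\mu$ becomes left invariant, hence (being left invariant with $\mathrm{ad}$-invariant value at $\epsilon$) biinvariant. So everything rests on establishing $\mathrm{ad}$-invariance of $B$.

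To extract this algebraic identity from the differential condition, I would fix $y,z$ and read \eqref{eq:parallel} as a first order system for $F\colon G\to\mathrm{Sym}^2(\mathcal{G}^*)$, $F(p)(y,z):=\mu_p(y^+_p,z^+_p)$, namely $x^+\cdot F=\tfrac12\,\mathcal{L}_xF$, where $(\mathcal{L}_xF)(y,z):=F([x,y],z)+F(y,[x,z])$ is the natural point-independent derivation induced by $\mathrm{ad}_x$. The key step is a commutator (integrability) computation. Applying $x'^+\cdot$ to $x^+\cdot F=\tfrac12\mathcal{L}_xF$ and using that $\mathcal{L}_x$ is constant (so it commutes with directional derivatives), the mixed second derivative $[x'^+,x^+]\cdot F$ evaluates on one hand to $[x',x]^+\cdot F=-\tfrac12\mathcal{L}_{[x,x']}F$ and on the other hand to $\tfrac14[\mathcal{L}_x,\mathcal{L}_{x'}]F=\tfrac14\mathcal{L}_{[x,x']}F$, where $[\mathcal{L}_x,\mathcal{L}_{x'}]=\mathcal{L}_{[x,x']}$ encodes the Jacobi identity. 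Equating the two expressions forces $\tfrac34\,\mathcal{L}_{[x,x']}F\equiv0$, hence $\mathcal{L}_{[x,x']}F\equiv0$ on all of $G$. Evaluating at $\epsilon$ gives $\mathcal{L}_wB=0$, that is $B([w,y],z)+B(y,[w,z])=0$, for every $w$ in the span of the brackets, i.e. for every $w\in[\mathcal{G},\mathcal{G}]$.

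Finally I would invoke the hypothesis $[\mathcal{G},\mathcal{G}]=\mathcal{G}$: the identity $\mathcal{L}_wB=0$ then holds for every $w\in\mathcal{G}$, so $B=\mu_\epsilon$ is $\mathrm{ad}$-invariant, and by the reduction of the first paragraph $\mu$ is biinvariant. Conceptually this is the holonomy argument in disguise: the curvature of $\nabla$ is $R(x,y)=-\tfrac14\,\mathrm{ad}_{[x,y]}$, so by Ambrose-Singer the holonomy algebra contains $\mathrm{ad}_{[\mathcal{G},\mathcal{G}]}$, which equals $\mathrm{ad}_{\mathcal{G}}$ for a perfect group; since a parallel metric is holonomy-invariant, $\mu_\epsilon$ is $\mathrm{ad}$-invariant. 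I expect the commutator/integrability step to be the main obstacle, being the only place where one must track the factors $\tfrac12$, $\tfrac14$ and invoke Jacobi through $[\mathcal{L}_x,\mathcal{L}_{x'}]=\mathcal{L}_{[x,x']}$; once $\mathrm{ad}$-invariance on $[\mathcal{G},\mathcal{G}]$ is in hand, perfectness upgrades it to full $\mathrm{ad}$-invariance in a single line.
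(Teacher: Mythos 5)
Your core computation is sound, and it is essentially the paper's own mechanism in disguise: your commutator/integrability step is a hands-on version of Proposition \ref{prop:curvature-Levi-Civita} (parallelism forces the curvature $R(x,y)=-\tfrac14\,\mathrm{ad}_{[x,y]}$ of the canonical connection to be skew with respect to $\mu$), and it correctly yields that $\mathrm{ad}_w$ is $\mu_\epsilon$-skew for every $w\in[\mathcal G,\mathcal G]$, i.e.\ identity \eqref{eq:Cartan-Schouten}, which perfectness upgrades to full $\mathrm{ad}$-invariance. One sign slip: with your definition of $\mathcal L_x$ one has $[\mathcal L_x,\mathcal L_{x'}]=-\mathcal L_{[x,x']}$, not $+\mathcal L_{[x,x']}$ (your $\mathcal L_x$ is \emph{minus} the natural representation of $\mathcal G$ on bilinear forms), so the two evaluations of the mixed derivative are $-\tfrac12\mathcal L_{[x,x']}F$ and $-\tfrac14\mathcal L_{[x,x']}F$, forcing $\tfrac14\mathcal L_{[x,x']}F\equiv 0$ rather than $\tfrac34\mathcal L_{[x,x']}F\equiv 0$; the conclusion $\mathcal L_{[x,x']}F\equiv 0$ is unaffected.

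The genuine gap is in your first paragraph, the reduction from ``$B=\mu_\epsilon$ is $\mathrm{ad}$-invariant'' to ``$\mu$ is biinvariant.'' You invoke Theorem \ref{theo:exp-diffeo}, but that theorem assumes $\exp:\mathcal G\to G$ is a \emph{global} diffeomorphism, and this hypothesis fails for every nontrivial perfect Lie group: by the Dixmier--Saito criterion quoted in the paper, $\exp$ is a diffeomorphism only for certain connected, simply connected \emph{solvable} groups, and a solvable Lie algebra with $[\mathcal G,\mathcal G]=\mathcal G$ is zero. So the theorem you cite is vacuous precisely in the setting of this statement (think of $SU(2)$ or $SL(2,\mathbb R)$, where $\exp$ is not injective). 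The fix is cheap and avoids $\exp$ altogether: let $B^+$ denote the left invariant metric with value $B$ at $\epsilon$. Its coefficients $B^+(y^+,z^+)$ in a left invariant frame are constant functions, so equation \eqref{eq:parallel} for $B^+$ reduces to $0=\tfrac12\bigl(B([x,y],z)+B(y,[x,z])\bigr)$, which is exactly the $\mathrm{ad}$-invariance you established; hence $B^+$ is $\nabla$-parallel. Now $\mu$ and $B^+$ are two tensors parallel for the same connection which agree at $\epsilon$, and a parallel tensor on a connected manifold is determined by its value at one point via parallel transport, so $\mu=B^+$ everywhere. Thus $\mu$ is left invariant with $\mathrm{Ad}$-invariant value at the unit (connectedness converts $\mathrm{ad}$- into $\mathrm{Ad}$-invariance), hence biinvariant. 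With that replacement, your argument is complete and matches the curvature/holonomy route the paper outlines.
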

In particular, every Cartan-Schouten metric on a semi-simple Lie group $G$ or on its cotangent bundle $T^*G,$ is biinvariant,  where $T^*G$ is endowed with its Lie group structure induced by the right trivialization \cite{diatta-manga-sy-NLAGA}. 

We announce the following result whose complete proof will be published elsewhere. 
\begin{theorem}\label{theo:Cartan-Schouten-formula-Lie-Algebra}A connected Lie group $G$  has a Cartan-Schouten metric $\mu$ if and only if its Lie algebra, say $\mathcal G$,
 has  a metric $\bar \mu$ with same signature as $\mu,$ satisfying the following
\beqn \label{eq:Cartan-Schouten}0&=&\bar \mu([[x_1,x_2],y],z) +\mu(y,[[x_1,x_2],z])
\eeqn
for any $x_1,x_2,y,z\in\mathcal G.$ Furthermore, at the unit $\epsilon\in G$, the two metrics $\mu$ and $\bar \mu$ coincide.
\end{theorem}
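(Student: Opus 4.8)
The plan is to recognize that a metric $\mu$ is Cartan-Schouten precisely when it is parallel for the canonical connection, i.e. $\nabla\mu=0$, which in the left-invariant framing is exactly equation (\ref{eq:parallel}). Writing $f_{y,z}(g):=\mu(y^+,z^+)(g)$ for $y,z\in\mathcal G$, condition (\ref{eq:parallel}) becomes the first-order linear system
\beqn
x^+ f_{y,z}&=&\tfrac12\big(f_{[x,y],z}+f_{y,[x,z]}\big),\qquad x,y,z\in\mathcal G .
\eeqn
I would treat this as a complete (overdetermined) system that prescribes every derivative of the unknowns $f_{y,z}$, with prescribed initial value $f_{y,z}(\epsilon)=\bar\mu(y,z)$, and extract its integrability condition.

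For the ``only if'' direction, assume $\mu$ is Cartan-Schouten, so the system above holds on all of $G$. Computing $[x_1^+,x_2^+]f_{y,z}$ in two ways --- directly, using $[x_1^+,x_2^+]=[x_1,x_2]^+$, and by iterating the system --- and then simplifying the second expression with the Jacobi identity $[x_1,[x_2,\,\cdot\,]]-[x_2,[x_1,\,\cdot\,]]=[[x_1,x_2],\,\cdot\,]$, the two results agree up to a factor $\tfrac12$ versus $\tfrac14$. Their equality forces
\beqn
f_{[[x_1,x_2],y],z}+f_{y,[[x_1,x_2],z]}&=&0
\eeqn
identically on $G$; evaluating at $\epsilon$ gives precisely (\ref{eq:Cartan-Schouten}) for $\bar\mu:=\mu_\epsilon$, and $\mu$ and $\bar\mu$ agree at $\epsilon$ by definition.

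For the converse, I would run the same computation as a Frobenius argument on the trivial bundle $G\times\mathrm{Sym}^2\mathcal G^*$, whose leaves (graphs of solutions) are the integral manifolds of the distribution $D$ spanned by the fields $x^+$ together with the vertical field generated by $F\mapsto\rho(x)F:=\tfrac12\big(F([x,\,\cdot\,],\,\cdot\,)+F(\,\cdot\,,[x,\,\cdot\,])\big)$. The bracket computation above shows that $D$ fails to be involutive by exactly the vertical term $\rho([x_1,x_2])F$, so $D$ is integrable precisely over the subbundle $G\times W$, where $W$ consists of the forms killed by all $\rho([x_1,x_2])$ --- that is, the forms satisfying (\ref{eq:Cartan-Schouten}). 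Using $\rho(w)\rho(x)F=\rho(x)\rho(w)F+\tfrac12\rho([w,x])F$ together with $[w,x]\in[\mathcal G,\mathcal G]$, one checks that $D$ is tangent to $G\times W$, so through $(\epsilon,\bar\mu)$ with $\bar\mu\in W$ there passes an integral leaf, yielding a solution $F$ and hence a metric $\mu$ with $\mu_\epsilon=\bar\mu$ and $\nabla\mu=0$. Since $\nabla$ is torsion-free it is then the Levi-Civita connection of $\mu$, and since parallel transport is a linear isomorphism, $\mu$ has the signature of $\bar\mu$ everywhere. Equivalently, and more explicitly, I would define $\mu$ by radial parallel transport, $\mu_{\exp x}(u,v)=\bar\mu\big(e^{\frac12\mathrm{ad}_x}\tilde u,\,e^{\frac12\mathrm{ad}_x}\tilde v\big)$ in the left-trivialization (cf. Theorem \ref{theo:exp-diffeo}), and verify $\nabla\mu=0$ using (\ref{eq:Cartan-Schouten}).

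The step I expect to be the main obstacle is the global single-valuedness of $\mu$ over a non-simply-connected $G$: the Frobenius leaf through $(\epsilon,\bar\mu)$ covers $G$ and descends to an honest global section only if the holonomy of this flat connection fixes $\bar\mu$. The infinitesimal part of the holonomy is fully controlled by (\ref{eq:Cartan-Schouten}): since the canonical connection is a symmetric-space connection one has $\nabla R=0$ with $R(x,y)=-\tfrac14\,\mathrm{ad}_{[x,y]}$, so by the Ambrose-Singer theorem the restricted holonomy algebra is $\mathrm{span}\{\mathrm{ad}_{[x,y]}\}$ and (\ref{eq:Cartan-Schouten}) is exactly the skew-symmetry of these operators for $\bar\mu$. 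The discrete part coming from $\pi_1(G)$ must, however, be handled separately --- by passing to the universal cover, or, where $\exp$ is a diffeomorphism, by the explicit radial formula above, which is manifestly globally defined.
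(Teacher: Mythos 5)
Your ``only if'' half is correct and is in substance the argument the paper itself gives. The paper obtains (\ref{eq:Cartan-Schouten}) from Proposition \ref{prop:curvature-Levi-Civita} (the curvature of any connection that parallelizes $\mu$ is $\mu$-skew, deduced from Proposition \ref{prop:curvatures-of-dual-connections} on dual connections), applied to left invariant fields, where $R(x_1,x_2)y=-\tfrac14[[x_1,x_2],y]$, and evaluated at the unit. Your computation of $[x_1^+,x_2^+]f_{y,z}$ in two ways is exactly this curvature obstruction unrolled by hand: the mismatch between the coefficient $\tfrac12$ (direct use of the system for $[x_1,x_2]^+$) and $\tfrac14$ (iteration plus Jacobi) is precisely the curvature acting on $\mu$. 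The paper's detour through dual connections buys a statement it reuses in its information-geometric setting; your version is more self-contained. (A cosmetic point: with your conventions the commutation rule is $\rho(w)\rho(x)=\rho(x)\rho(w)-\tfrac12\rho([w,x])$, not $+\tfrac12$; this is immaterial since you only use that $\rho([w,x])$ annihilates $W$.)

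The converse is where the genuine gap lies, and here there is nothing in the paper to compare against: the paper explicitly states that the proof of this direction ``will be published elsewhere'' (as is the proof of Theorem \ref{theo:exp-diffeo}, which you invoke as an alternative). Your Frobenius/holonomy argument does prove the converse when $G$ is simply connected: involutivity over $G\times W$, tangency of the distribution to $G\times W$, completeness of the lifted flows, and the Ambrose--Singer identification of the restricted holonomy algebra with $\mathrm{ad}_{[\G,\G]}$ (legitimate because $\nabla R=0$) are all sound, and nondegeneracy and signature propagate by parallel transport. But the theorem is stated for an arbitrary connected $G$, and your two fallbacks do not close the case $\pi_1(G)\neq 0$. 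Passing to the universal cover produces a Cartan-Schouten metric on $\tilde G$ only; to descend it to $G=\tilde G/\Gamma$ you need the unique parallel extension of $\bar\mu$ to be invariant under the deck transformations, i.e.\ you need the \emph{full} holonomy group, not just its identity component, to fix $\bar\mu$, while (\ref{eq:Cartan-Schouten}) controls only the restricted part. Concretely, for a central $\gamma=\exp(c)\in\Gamma$ the transport acts on $\bar\mu$ through the involution $e^{\frac12\mathrm{ad}_c}$ (an involution because $e^{\mathrm{ad}_c}=\mathrm{id}$), and when $c\notin[\G,\G]$ nothing you have written shows this fixes $\bar\mu$. Your second fallback, the radial formula, presupposes that $\exp$ is a global diffeomorphism, which already forces $G$ to be simply connected, so it adds nothing. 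As it stands, your proposal proves the statement only for simply connected $G$; whether the discrete-holonomy obstruction always vanishes (so that the theorem holds as stated) cannot be checked against the paper, since the paper withholds its own proof of this direction.
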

For the proof that the Lie algebra of a Lie Group with a Cartan-Schouten metric, has   a metric satisfying (\ref{eq:Cartan-Schouten}), we will need Proposition \ref{prop:curvature-Levi-Civita}, which is a corollary of 
the following well-known Proposition \ref{prop:curvatures-of-dual-connections}, whose proof is provided below using a new method. If a metric on a Lie algebra satisfies (\ref{eq:Cartan-Schouten}),  we call it a Cartan-Schouten metric on the Lie algebra.  Equality (\ref{eq:Cartan-Schouten}) is equivalent to the fact that the adjoint operator ad$_u$ of any $u\in[\G,\G],$ is skew symmetric with respect to $\mu.$                             

\begin{proposition}\label{prop:curvatures-of-dual-connections}
Let $\mu$ be a Riemannian or pseudo-Riemannian metric on a manifold $M.$ If $\nabla^1$ and $\nabla^{-1}$ are two  connections on $M$ which are dual with respect to $\mu,$ then their respective curvature tensors $R^{\nabla^1}$ and $R^{\nabla^{-1}}$   satisfy the following equation,
\beqn \label{eq:curvatures-of-dual-connections} 0&=&\mu\Big(R^{\nabla^1} (X_1,X_2)Y,Z\Big)+\mu\Big(Y,R^{\nabla^{-1}} (X_1,X_2)Z \Big) \;,
\eeqn
for any $X_1,X_2,Y,Z\in\mathfrak{X}(M).$
\end{proposition}
\begin{proof} 
For any $Y,Z\in\mathfrak{X}(M),$ consider the function  $F_{Y,Z}=\mu(Y,Z)\in C^\infty(M,\mathbb R)$ and denote its differential by   $\nu:=dF_{Y,Z}.$ The relation
\beqn \label{eq:curvatures-dual-connections} 0=\nabla\mu (X,Y,Z)= X\cdot \mu(Y,Z)-\mu(\nabla_X^1Y,Z)-\mu(Y,\nabla_X^{-1}Z)),
\eeqn
is thus equivalent to 
$\nu (X)=\mu(\nabla_XY^1,Z)+\mu(Y,\nabla_X^{-1}Z))$ for any $X\in\chi(M).$
 Since $\nu$ is an exact 1-form, we thus get
\beqn \label{eq1a} 0=d\nu(X_1,X_2)&=&X_1\cdot \nu(X_2)-X_2\cdot \nu(X_1)-\nu([X_1,X_2])\nonumber\\
&=& X_1\cdot\mu(\nabla_{X_2}^1Y,Z)+ X_1\cdot\mu(Y,\nabla_{X_2}^{-1}Z)
- X_2\cdot\mu(\nabla_{X_1}^1 Y,Z)
\nonumber\\
&&- X_2\cdot\mu(Y,\nabla_{X_1}^{-1}Z)
-\mu(\nabla_{[X_1,X_2]}^1Y,Z)- \mu(Y, \nabla_{[X_1,X_2]}^{-1}Z),\eeqn
which we now expand, by applying (\ref{eq:curvatures-dual-connections})  to (\ref{eq1a}),  as
\beqn \label{eq1b} 0&=& \mu(\nabla_{X_1}^1\nabla_{X_2}^1Y,Z)+\mu(\nabla_{X_2}^1Y,\nabla_{X_1}^{-1}Z) 
+ \mu(\nabla_{X_1}^1Y,\nabla_{X_2}^{-1}Z) + \mu(Y,\nabla_{X_1}^{-1}\nabla_{X_2}^{-1}Z) \nonumber\\
&&- \mu(\nabla_{X_2}^1\nabla_{X_1}^1 Y,Z)- \mu(\nabla_{X_1}^1 Y,\nabla_{X_2}^{-1}Z)
- \mu(\nabla_{X_2}^1Y,\nabla_{X_1}^{-1}Z)- \mu(Y,\nabla_{X_2}^{-1}\nabla_{X_1}^{-1}Z)\nonumber\\
&&
-\mu(\nabla_{[X_1,X_2]}^1Y,Z)- \mu(Y, \nabla_{[X_1,X_2]}^{-1}Z).\eeqn

Equality (\ref{eq1b}) readily simplifies to 
\beqn\label{eq1c} 0&=&\mu\Big((\nabla_{X_1}^1\nabla_{X_2}^1- \nabla_{X_2}^1\nabla_{X_1}^1 -\nabla_{[X_1,X_2]}^1)Y,Z\Big)
\nonumber\\
&&
+\mu\Big(Y,(\nabla_{X_1}^{-1}\nabla_{X_2}^{-1} - \nabla_{X_2}^{-1}\nabla_{X_1}^{-1} - \nabla_{[X_1,X_2]}^{-1})Z \Big)
\nonumber\\
&=&\mu\Big(R^{\nabla^1} (X_1,X_2)Y,Z\Big)+\mu\Big(Y,R^{\nabla^{-1}} (X_1,X_2)Z \Big) \;.
\eeqn
\end{proof}

As a direct corollary of Proposition \ref{prop:curvatures-of-dual-connections}, we get the following result, which is also consequence of the Ambrose-Singer holonomy theorem.  
\begin{proposition}\label{prop:curvature-Levi-Civita}
Let $\mu$ be a Riemannian or pseudo-Riemannian metric and $\bar\nabla$ a (not necessarily torsion free) connection, on a manifold $M.$  Suppose $\bar\nabla\mu=0.$ Then the curvature tensor $R^{\bar\nabla}$ of $\bar\nabla$, is skew symmetric with respect to $\mu:$
\beqn \label{eq:skew-curvature-parallel} 0&=&\mu\Big(R^{\bar\nabla} (X_1,X_2)Y,Z\Big)+\mu\Big(Y,R^{\bar\nabla} (X_1,X_2)Z \Big) \;,
\eeqn
for any $X_1,X_2,Y,Z\in\mathfrak{X}(M).$
\end{proposition}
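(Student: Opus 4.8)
The plan is to obtain this statement as the diagonal case of the already-proven Proposition \ref{prop:curvatures-of-dual-connections}. Recall that proposition concerns a \emph{pair} of connections $\nabla^1,\nabla^{-1}$ that are mutually dual with respect to $\mu$, and concludes
\[
0=\mu\bigl(R^{\nabla^1}(X_1,X_2)Y,Z\bigr)+\mu\bigl(Y,R^{\nabla^{-1}}(X_1,X_2)Z\bigr).
\]
The key observation I would exploit is that the hypothesis $\bar\nabla\mu=0$ is exactly the statement that $\bar\nabla$ is dual to \emph{itself}: writing out $\bar\nabla\mu(X,Y,Z)=X\cdot\mu(Y,Z)-\mu(\bar\nabla_XY,Z)-\mu(Y,\bar\nabla_XZ)=0$ gives the duality relation (\ref{dual-connections}) with both $\nabla^1$ and $\nabla^{-1}$ replaced by $\bar\nabla$. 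So the first step is simply to record that $(\bar\nabla,\bar\nabla)$ is a dual pair.

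Next I would substitute $\nabla^1=\nabla^{-1}=\bar\nabla$ into Proposition \ref{prop:curvatures-of-dual-connections}. Its conclusion then reads
\[
0=\mu\bigl(R^{\bar\nabla}(X_1,X_2)Y,Z\bigr)+\mu\bigl(Y,R^{\bar\nabla}(X_1,X_2)Z\bigr),
\]
which is precisely (\ref{eq:skew-curvature-parallel}). The only point demanding care is that the proof of Proposition \ref{prop:curvatures-of-dual-connections} nowhere used torsion-freeness of the two connections nor any relation between them beyond the duality identity and the fact that $\mu$ is a fixed metric; inspecting that proof, it rests only on (\ref{eq:curvatures-dual-connections}), on differentiating the exact one-form $\nu=dF_{Y,Z}$, and on the definition of curvature. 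Since those ingredients survive verbatim when the two connections coincide, the substitution is legitimate even though $\bar\nabla$ is allowed to have torsion.

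The main (and essentially only) obstacle is therefore a bookkeeping one rather than a conceptual one: I must verify that the derivation leading to (\ref{eq1c}) does not secretly invoke $\nabla^1\neq\nabla^{-1}$ — for instance in the cancellation of the cross terms $\mu(\nabla_{X_2}^1Y,\nabla_{X_1}^{-1}Z)$ against their counterparts. A quick check shows these mixed terms cancel purely by antisymmetrization in $X_1,X_2$, independently of whether the two connections agree, so the coincidence $\nabla^1=\nabla^{-1}=\bar\nabla$ causes no collapse. Alternatively, if one prefers an intrinsic argument, I would note that $\bar\nabla\mu=0$ means parallel transport along any loop preserves $\mu$, hence lies in the orthogonal group $O(\mu)$ of the tangent space; differentiating, the holonomy Lie algebra — which by the Ambrose--Singer theorem is generated by the curvature endomorphisms $R^{\bar\nabla}(X_1,X_2)$ — consists of $\mu$-skew endomorphisms, which is exactly (\ref{eq:skew-curvature-parallel}). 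I would present the first (direct-specialization) route as the main proof and mention the Ambrose--Singer argument as the conceptual explanation.
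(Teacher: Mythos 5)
Your proposal is correct and matches the paper's own proof exactly: the paper likewise proves this proposition by substituting $\nabla^1=\nabla^{-1}=\bar\nabla$ into Proposition \ref{prop:curvatures-of-dual-connections}, and it also notes the Ambrose--Singer holonomy theorem as an alternative justification, just as you do. Your added verification that the earlier proof never uses torsion-freeness or distinctness of the two connections is a sensible (if implicit in the paper) bit of diligence.
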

\begin{proof} In Proposition \ref{prop:curvatures-of-dual-connections}, take $\nabla^1=\nabla^{-1}=\bar\nabla,$ the identity (\ref{eq:curvatures-of-dual-connections} ) then becomes (\ref{eq:skew-curvature-parallel}).
\end{proof}

The proof that the Lie algebra of a Lie Group with a Cartan-Schouten metric, has   a metric satisfying (\ref{eq:Cartan-Schouten}) is deduced from  Proposition \ref{prop:curvature-Levi-Civita}. Indeed, if $M=G$ is a Lie group, $\G$ its Lie algebra, we take $X_1,X_2,Y,Z$ to be all left invariant vector fields on $G$ and $\bar\nabla$ to be the Cartan-Schouten standard connection. Then Equation (\ref{eq:skew-curvature-parallel} ) taken at the unit of $G$, gives Equation (\ref{eq:Cartan-Schouten}). The proof  that from a metric on $\G$ satisfying 
 Equation (\ref{eq:Cartan-Schouten}), we can construct a Cartan-Schouten metric on any connected Lie group  with Lie algebra $\G,$ will be published elsewhere.

\subsection{Riemannian Cartan-Schouten metrics}

In this section, we  show that the Lie groups with a Riemannian metric which is also a Cartan-Schouten metric, are essentially all the 2-nilpotent Lie groups,  the compact simple Lie groups, the Abelian Lie groups and all their Cartesian (direct) products.

The following result is due to J. Milnor.
\begin{theorem}\label{theo:milnor}\cite{Milnor76} The only Lie groups with a biinvariant Riemannian metric are the Cartesian product of some compact Lie group and  some additive vector group.
\end{theorem}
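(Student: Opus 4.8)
The plan is to prove both directions, with the substance in the implication that a biinvariant Riemannian metric forces the product structure. The easy direction is a construction: given $G = K \times V$ with $K$ compact and $V \cong \mathbb{R}^k$ an additive vector group, I would average any left-invariant metric on $K$ against the Haar measure of $K$ to obtain a biinvariant metric on $K$, equip $V$ with its flat Euclidean metric (which is biinvariant since $V$ is abelian), and take the product metric. For the converse, the first step is to translate biinvariance into an infinitesimal condition. A left-invariant metric is determined by the inner product $\langle\cdot,\cdot\rangle$ it induces on $\mathcal G$ at the unit, and, exactly as in (\ref{eq:parallel}), it is biinvariant if and only if this inner product is $\mathrm{Ad}(G)$-invariant, equivalently $\mathrm{ad}_x$ is skew-symmetric for every $x\in\mathcal G$:
\[
\langle [x,y], z\rangle + \langle y, [x,z]\rangle = 0, \qquad x,y,z\in\mathcal G.
\]
This is the condition (\ref{eq:Cartan-Schouten}) now imposed for \emph{all} $x$, which is precisely what makes the canonical Cartan-Schouten connection the Levi-Civita connection of $\mu$.

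Next I would read off the algebraic structure forced by skew-symmetry. The key observation is that if $\mathfrak a\subseteq\mathcal G$ is an ideal, then its orthogonal complement $\mathfrak a^\perp$ is again an ideal, since for $a\in\mathfrak a^\perp$, $b\in\mathfrak a$ one has $\langle [x,a],b\rangle = -\langle a,[x,b]\rangle = 0$. Because $\langle\cdot,\cdot\rangle$ is definite, $\mathfrak a\cap\mathfrak a^\perp = 0$, so every ideal yields a splitting of $\mathcal G$ into a direct sum of ideals. Iterating this on a minimal ideal, and splitting the center into lines, I would obtain an orthogonal decomposition $\mathcal G = \mathfrak z \oplus \mathfrak g_1 \oplus\cdots\oplus \mathfrak g_r$, where $\mathfrak z$ is the (abelian) center and each $\mathfrak g_i$ is a simple ideal; in particular $\mathcal G$ is reductive with $[\mathcal G,\mathcal G] = \mathfrak g_1\oplus\cdots\oplus\mathfrak g_r$ semisimple.

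The third step identifies the semisimple part as being of compact type. Since each $\mathrm{ad}_x$ is skew-symmetric with respect to the \emph{positive definite} inner product, it is diagonalizable over $\mathbb{C}$ with purely imaginary eigenvalues, so $\mathrm{ad}_x^2$ is negative semidefinite and the Killing form satisfies $B(x,x) = \mathrm{tr}(\mathrm{ad}_x^2)\le 0$ for all $x$. Thus $B$ is negative semidefinite on $\mathcal G$, and its restriction to the semisimple ideal $[\mathcal G,\mathcal G]$ is nondegenerate, hence negative definite; therefore each $\mathfrak g_i$ is a compact-type simple Lie algebra.

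Finally I would integrate back to the group. The compact-type semisimple factor $\mathfrak s = \bigoplus_i\mathfrak g_i$ integrates, by Weyl's theorem that a simply connected semisimple Lie group with negative definite Killing form has finite fundamental group and is compact, to a compact subgroup, while the central factor $\mathfrak z$ integrates to $\mathbb{R}^k\times T^m$; absorbing the torus $T^m$ into the compact part gives $G\cong K\times\mathbb{R}^k$ with $K$ compact. The hard part will be this last, group-level, step rather than the Lie-algebra bookkeeping: the orthogonal decomposition and the compact-type identification are essentially formal once skew-symmetry is in hand, but passing from ``$\mathfrak s$ is of compact type'' to ``the corresponding subgroup is compact'' genuinely requires Weyl's finiteness theorem, and assembling a global product splitting $G\cong K\times\mathbb{R}^k$ (as opposed to a splitting merely of $\mathcal G$ or of the universal cover) requires care with the discrete center and with the possibility that $G$ is a nontrivial quotient.
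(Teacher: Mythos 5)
The paper offers no proof of this statement at all---it is quoted verbatim from Milnor \cite{Milnor76} (Lemma 7.5 there)---and your outline is essentially Milnor's own argument: biinvariance $\Leftrightarrow$ skew-adjointness of every $\mathrm{ad}_x$, orthogonal complements of ideals are ideals so $\mathcal G$ splits as center plus simple ideals, the Killing form is negative definite on the semisimple part (compact type), and then Weyl's finiteness theorem plus an analysis of the discrete central kernel of $\tilde S\times\mathbb R^n\to G$ yields the group-level splitting. Your plan is correct, including your identification of the last step as the genuinely delicate one; the only caveat worth recording is that both the statement and the argument implicitly require $G$ connected (otherwise $\mathrm{ad}$-skew-symmetry does not capture biinvariance, and discrete infinite groups such as $\mathbb Z$ give counterexamples to the statement as literally written).
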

Milnor's result somehow  implicitly spelled disappointment for those who expected a lot of Lie groups with biinvariant Riemannian metrics for applications. However, we show here that if we drop the biinvariance property for the Riemannian metric, and only require that its Levi-Civita connection be biinvariant (which characterizes Cartan-Schouten metrics), then it turns out that a much wider family of Lie groups enjoy such a property. We actually further show that
Cartan Schouten metrics of any given signature are in abundance on 2-nilpotent Lie groups, as we shall see Section \ref{sect:2-nilpotent}.
\begin{theorem} \label{theo:Riemann-Cartan-Schouten}For a solvable non-Abelian Lie group G, the following are equivalent.
(1) G admits a positive (or negative) definite Cartan-Schouten metric. 
(2) G is 2-nilpotent. \end{theorem}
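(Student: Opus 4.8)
The plan is to carry out the entire argument at the Lie-algebra level, using Theorem \ref{theo:Cartan-Schouten-formula-Lie-Algebra} in its reformulated shape: $G$ admits a positive-definite Cartan-Schouten metric exactly when $\G$ carries a positive-definite inner product $\mu$ for which $\mathrm{ad}_u$ is skew-symmetric (with respect to $\mu$) for every $u\in[\G,\G]$. So I would reduce both implications to statements about the operators $\mathrm{ad}_u$, $u\in[\G,\G]$, acting on the Euclidean space $(\G,\mu)$. I would note at the outset that it is \emph{definiteness}, not positivity, that the argument uses: replacing $\mu$ by $-\mu$ leaves the skew-symmetry condition $\mu(\mathrm{ad}_u y,z)+\mu(y,\mathrm{ad}_u z)=0$ unchanged, so the negative-definite case reduces to the positive-definite one.

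The implication $(2)\Rightarrow(1)$ is immediate. If $G$ is 2-nilpotent then $[\G,\G]\subseteq Z(\G)$, hence $\mathrm{ad}_u=0$ for every $u\in[\G,\G]$, and the skew-symmetry requirement is then satisfied by \emph{any} inner product. Picking any positive-definite $\mu$ on $\G$ and invoking Theorem \ref{theo:Cartan-Schouten-formula-Lie-Algebra} produces the desired Riemannian Cartan-Schouten metric on $G$.

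The substance lies in $(1)\Rightarrow(2)$, where I would combine two ingredients. First, solvability forces $\mathrm{ad}_u$ to be nilpotent for every $u\in[\G,\G]$: after complexifying, Lie's theorem gives a basis of $\G_\C$ in which every $\mathrm{ad}_x$ is upper triangular, so each $\mathrm{ad}_{[x,y]}=[\mathrm{ad}_x,\mathrm{ad}_y]$ is strictly upper triangular; since $[\G,\G]$ is spanned by brackets and $\mathrm{ad}$ is linear, $\mathrm{ad}_u$ is strictly upper triangular, hence nilpotent, for all $u\in[\G,\G]$ (and nilpotency passes to the real form). Second, I would use the linear-algebra fact that a $\mu$-skew-symmetric nilpotent operator must vanish: writing $A=\mathrm{ad}_u$ in a $\mu$-orthonormal basis, skew-symmetry reads $A^{\mathsf T}=-A$, so $\mathrm{tr}(A^{\mathsf T}A)=-\mathrm{tr}(A^2)=0$ because $A^2$ is nilpotent, and since $\mathrm{tr}(A^{\mathsf T}A)=\sum_{i,j}A_{ij}^2$, positive-definiteness forces $A=0$. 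Hence $\mathrm{ad}_u=0$ for all $u\in[\G,\G]$, i.e. $[\G,\G]\subseteq Z(\G)$, which is precisely the 2-nilpotency $[[\G,\G],\G]=0$.

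The hard part is $(1)\Rightarrow(2)$, and conceptually its heart is that definiteness of $\mu$ is exactly what makes the argument close: over a definite metric a skew-symmetric operator is diagonalizable with purely imaginary spectrum, so together with the nilpotency coming from Lie's theorem the spectrum collapses to $\{0\}$ and the operator dies. The two ingredients are individually standard, so the only real care is bookkeeping: performing the trace computation in a $\mu$-orthonormal basis so that $\mu$-skew-symmetry becomes the matrix identity $A^{\mathsf T}=-A$, and checking that nilpotency transfers between $\G$ and its complexification. It is worth flagging that this mechanism pinpoints where positive-definiteness is indispensable: in indefinite signature (the Lorentzian setting of Section \ref{sec:Lorentzian-Carten-Schouten}) skew-symmetric nilpotent operators need not vanish, which is exactly why non-2-nilpotent examples appear there.
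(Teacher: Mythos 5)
Your proposal is correct and follows essentially the same route as the paper's own proof: both reduce to the Lie-algebra condition (\ref{eq:Cartan-Schouten}), use solvability to get nilpotency of $\mathrm{ad}_u$ for $u\in[\G,\G]$, conclude that a nilpotent operator that is skew-adjoint with respect to a definite metric must vanish, and obtain the converse from the construction of Cartan-Schouten metrics on 2-nilpotent groups (Theorem \ref{theo:2-nilpotent}, itself resting on Theorem \ref{theo:Cartan-Schouten-formula-Lie-Algebra}). The only difference is that you spell out the two ingredients (Lie's theorem and the trace argument) that the paper states without proof.
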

\begin{proof} Let $\mathcal G$ be the Lie algebra of G. Since $\mathcal G$ is solvable, the  linear transformation $ad_u$ is nilpotent for  any $u\in[\mathcal G,\mathcal G].$   Thus, if G
admits a Cartan-Schouten metric  which is positive (or negative) definite, $ad_u$ being   skew-adjoint (see (\ref{eq:Cartan-Schouten})), is necessarily equal to the zero map, for  any $u\in[\mathcal G,\mathcal G].$  Thus $\G$ is 2-nilpotent.
Conversely, any 2-nilpotent Lie group possess as many Cartan-Schouten metrics which are positive definite, as left (resp. right) invariant  Riemannian metrics. See Theorem  \ref{theo:2-nilpotent}.
\end{proof}

From Theorem \ref{Theorem:parallel-metric-perfect-Lie-groups} and Theorem \ref{theo:milnor}, we deduce following 

\begin{theorem} A perfect Lie group admits a Cartan-Schouten metric  which is positive (or negative) definite, if and only  if it is semisimple and compact. 
\end{theorem}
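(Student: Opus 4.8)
The statement to prove is: a perfect Lie group admits a positive (or negative) definite Cartan-Schouten metric if and only if it is semisimple and compact. Let me sketch how I would establish both directions, leveraging the two theorems cited immediately before the statement.
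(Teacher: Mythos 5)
Your proposal stops exactly where the proof should begin: it restates the theorem, announces that you will ``leverage the two theorems cited immediately before the statement,'' and then ends. That strategy is indeed the paper's own strategy -- the paper deduces the result from Theorem \ref{Theorem:parallel-metric-perfect-Lie-groups} (any Cartan-Schouten metric on a perfect Lie group is necessarily biinvariant) and Milnor's Theorem \ref{theo:milnor} (any Lie group with a biinvariant Riemannian metric is a Cartesian product of a compact group and a vector group) -- but naming the ingredients is not the same as combining them, and your text performs no deduction at all. As it stands, this is a plan, not a proof.

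Concretely, what is missing is the following. For the forward direction: given a perfect Lie group $G$ with a positive (or negative) definite Cartan-Schouten metric $\mu$, Theorem \ref{Theorem:parallel-metric-perfect-Lie-groups} makes $\mu$ biinvariant (if $\mu$ is negative definite, pass to $-\mu$, which has the same Levi-Civita connection); Milnor's theorem then gives $G \cong K \times \mathbb{R}^n$ with $K$ compact. Neither cited theorem finishes the job by itself: you must still use perfectness to kill the non-semisimple pieces. Writing the Lie algebra as $\mathfrak{k}\oplus\mathbb{R}^n$ with $\mathfrak{k}=[\mathfrak{k},\mathfrak{k}]\oplus\mathfrak{z}(\mathfrak{k})$ (the standard splitting of a compact Lie algebra into a semisimple ideal and its center), the condition $[\G,\G]=\G$ forces $\mathbb{R}^n=0$ and $\mathfrak{z}(\mathfrak{k})=0$, so $G$ is compact and semisimple. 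For the converse direction, which your sketch does not even mention: on a compact semisimple Lie group the negative of the Killing form is an ad-invariant positive definite scalar product, the associated biinvariant Riemannian metric has $\nabla_x y=\frac{1}{2}[x,y]$ as its Levi-Civita connection, and hence is a positive definite Cartan-Schouten metric. (The paper's written proof only carries out the forward direction, treating the converse as standard; a complete argument should at least record it.) Until these steps are actually written down, there is no proof to evaluate.
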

\begin{proof}
According to Theorem \ref{Theorem:parallel-metric-perfect-Lie-groups}, if a perfect  Lie group $G$ admits a Cartan-Schouten metric, then the latter  must be biinvariant.  Now from Theorem \ref{theo:milnor},  we deduce that  $G$ must  then be a compact  and semisimlple Lie group. 
\end{proof}

\subsection{Lorentzian Cartan-Schouten metrics}\label{sec:Lorentzian-Carten-Schouten}

\subsubsection{A general result on Lorentzian Cartan-Schouten metrics}
Recall that Lie groups with  a  biinvariant  metric  are those special    Lie groups with a Cartan-Schouten  metric $\mu$ which is invariant under both left and right translations of all the group elements. Equivalently, the Lie algebras of such Lie groups have a metric (the value of $\mu$  at the unit $\epsilon$) which is invariant under the adjoint operator (adjoint-invariant, or ad-invariant for short) of every element of the Lie algebra. 
\begin{theorem}\cite{medina85}
The only simply connected and nonsimple Lie groups which admit an indecomposable biinvariant Lorentz metric are the oscillator Lie groups $G_\lambda$.
\end{theorem}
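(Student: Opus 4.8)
The plan is to pass to the infinitesimal picture and use the structure theory of metric (quadratic) Lie algebras. By the correspondence recalled just above, a biinvariant metric on the simply connected group $G$ is the same datum as an $\operatorname{ad}$-invariant scalar product on its Lie algebra $\G$, and this metric is indecomposable of Lorentz signature $(1,n-1)$ exactly when $(\G,\langle\cdot,\cdot\rangle)$ is an indecomposable metric Lie algebra of signature $(1,n-1)$, where indecomposable means $\G$ is not an orthogonal direct sum of two nonzero ideals. Since $G$ is determined by $\G$, it suffices to show that an indecomposable, non-simple Lorentzian metric Lie algebra is isomorphic, as a metric Lie algebra, to an oscillator algebra $\G_\lambda$, whose simply connected group is $G_\lambda$.

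First I would locate a minimal nonzero ideal $\mathfrak i\subseteq\G$. Its orthogonal $\mathfrak i^{\perp}$ is again an ideal by $\operatorname{ad}$-invariance, so if $\mathfrak i$ were nondegenerate we would get the orthogonal splitting $\G=\mathfrak i\oplus\mathfrak i^{\perp}$, contradicting indecomposability; hence $\mathfrak i$ is totally isotropic. In Lorentz signature a totally isotropic subspace has dimension at most $1$, so $\mathfrak i=\R z$ with $z$ a nonzero null vector spanning an ideal. This is exactly the situation that produces a double extension: $z^{\perp}$ is a codimension-one ideal containing $z$, the quotient $\mathfrak b:=z^{\perp}/\R z$ carries an induced $\operatorname{ad}$-invariant metric of the residual signature $(0,n-2)$, hence definite, and $\G$ is recovered as the one-dimensional double extension of $\mathfrak b$ by $\R t$, where $t$ completes a hyperbolic plane $\langle t,z\rangle=1$ with $z$. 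The operator $D:=\operatorname{ad}_t$ descends to a skew-symmetric derivation of $\mathfrak b$.

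Next I would pin down $\mathfrak b$. A definite $\operatorname{ad}$-invariant metric Lie algebra is reductive, $\mathfrak b=\mathfrak s\oplus\mathfrak a$ with $\mathfrak s$ a compact semisimple ideal and $\mathfrak a=Z(\mathfrak b)$ abelian (the Lie-algebra content of Milnor's Theorem~\ref{theo:milnor}). The skew derivation $D$ preserves $[\mathfrak b,\mathfrak b]=\mathfrak s$ and the center $\mathfrak a$; since every derivation of $\mathfrak s$ is inner, $D|_{\mathfrak s}=\operatorname{ad}_{s_0}$ for some $s_0\in\mathfrak s$. I would then show that, after correcting $t$ by $s_0$ and by a suitable element of $\R z$, the factor $\mathfrak s$ splits off as an orthogonal ideal of $\G$ carrying its own biinvariant metric; indecomposability forces $\mathfrak s=0$, so $\mathfrak b=\mathfrak a$ is abelian. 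On the abelian definite space $\mathfrak b$ the derivation $D$ is just a skew-symmetric endomorphism, and if $\ker D\neq0$ a kernel vector becomes central and nondegenerate, generating an ideal that splits off and again contradicting indecomposability; hence $D$ is invertible. An invertible skew-symmetric map of a Euclidean space forces $\dim\mathfrak b=2n$ even and block-diagonalizes into rotations with nonzero frequencies $\lambda_1,\dots,\lambda_n$; reading off the bracket relations (Heisenberg relations coming from the canonical cocycle $[b,b']=\langle Db,b'\rangle\,z$, with $t$ acting by $D$), $\G$ is precisely the oscillator algebra $\G_\lambda$, and $G=G_\lambda$.

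The main obstacle is the splitting-off of the semisimple factor $\mathfrak s$. Unlike the abelian kernel case, $\mathfrak s$ is not automatically orthogonal to the extending direction $t$, and the cocycle term $\langle Db,s\rangle\,z$ in the double-extension bracket obstructs a naive decomposition. The careful point is to exploit that $D|_{\mathfrak s}$ is inner in order to absorb $s_0$ into $t$ while simultaneously adjusting within the null line $\R z$ to restore isotropy and the normalization of the hyperbolic plane, so that $\mathfrak s$ becomes a genuine orthogonal direct ideal; this is where indecomposability and the reductive structure of $\mathfrak b$ are really used. Once $\mathfrak b$ is abelian, the remaining steps are the routine normal form of a nondegenerate skew-symmetric operator and the identification of the resulting bracket with the oscillator relations.
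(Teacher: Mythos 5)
This theorem is quoted in the paper from \cite{medina85} without an internal proof, so there is nothing in the paper itself to compare against; your argument is essentially the classical one of Medina (and the Medina--Revoy double-extension machinery) from that reference, and it is correct in outline. The chain you describe --- minimal ideal forced to be a one-dimensional isotropic (and in fact central) line $\R z$, double extension over the definite quotient $z^{\perp}/\R z$, reduction of that quotient to compact-semisimple $\oplus$ abelian, splitting off the semisimple factor via the embedding twisted by the inner derivation $\operatorname{ad}_{s_0}$ and a $z$-correction, invertibility of the skew operator $D$ by indecomposability, and the rotation normal form yielding the oscillator relations --- is exactly the standard proof, and you correctly isolated the $\mathfrak{s}$-splitting as the only genuinely delicate step.
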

Unlike biinvariant Lorentz metrics which only exist on the oscillator Lie groups $G_\lambda,$ the special linear group SL($2$), the Abelian Lie groups and their direct products, the more general Cartan-Schouten metrics which  are Lorentz metrics, exist in abundance.  Indeed, every 2-nilpotent  Lie group of dimension  $n,$ has as many Lorentzian  Cartan-Schouten metrics as there are scalar products of  signature ($1,n-1$) in   any vector space of dimension $n$ (see  Theorem \ref{theo:2-nilpotent}).      

We announce the following general result whose proof will be published elsewhere.
\begin{theorem}\label{thm:lorentz-Cartan-Schouten} Let G be a nondecomposable solvable Lie group. Suppose G has a Cartan Schouten  metric, which is a Lorentzian metric.Then G is one of the following types: (1) either 2-nilpotent or 3-nilpotent, (2) 3-step solvable and the derived ideal of its Lie algebra is a direct sum of a Heisenberg Lie algebra and an Abelian Lie algebra. \end{theorem}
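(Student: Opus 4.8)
The plan is to pass to the Lie algebra and exploit the rigidity of nilpotent skew-symmetric operators in Lorentz signature. By Theorem \ref{theo:Cartan-Schouten-formula-Lie-Algebra}, the hypothesis produces a Lorentzian scalar product $\bar\mu$ of signature $(1,n-1)$ on the Lie algebra $\G$ for which $\mathrm{ad}_u$ is skew-symmetric for every $u$ in the derived ideal $\mathfrak{n}:=[\G,\G]$; equivalently $\mathrm{ad}(\mathfrak{n})\subseteq\mathfrak{so}(1,n-1)$. Since $\G$ is solvable, $\mathfrak{n}$ is nilpotent, so each such $\mathrm{ad}_u$ is a \emph{nilpotent} element of $\mathfrak{so}(1,n-1)$. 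First I would record the two basic constraints this forces. A $\bar\mu$-skew-symmetric operator has even rank, because the associated form $\omega(x,y)=\bar\mu(\mathrm{ad}_u x,y)$ is alternating; and a nilpotent $N$ with $N^2=0$ would have totally isotropic image, hence rank at most $1$ by Witt index $1$, which combined with evenness forces $N=0$. Thus every nonzero $\mathrm{ad}_u$ has rank exactly $2$, satisfies $(\mathrm{ad}_u)^3=0$ (a Jordan block of size $\ge 4$ would produce a $2$-dimensional isotropic subspace), and determines a null line $\ell_u=\mathrm{Im}\big((\mathrm{ad}_u)^2\big)$.

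Next I would analyze the subalgebra $\mathfrak{h}:=\mathrm{ad}(\mathfrak{n})\subseteq\mathfrak{so}(1,n-1)$, which is closed under bracket and consists entirely of nilpotent elements. The key structural input is that, in real rank one, such a subalgebra lies in the nilradical of the (unique type of) minimal parabolic of $\mathfrak{so}(1,n-1)$, and that this nilradical is \emph{abelian}: it is the algebra of null rotations about a single null line $\ell$, since the restricted root system is $A_1$ and $2\alpha$ is not a root. I would obtain this either by citing the Mostow--Kostant description of subalgebras of nilpotent elements, or self-containedly by combining Engel's theorem with the observation that the only isotropic flag available in Lorentz signature is $0\subset\ell\subset\ell^\perp\subset\G$, which forces a common null line $\ell=\ell_u$ and hence $[\mathrm{ad}_u,\mathrm{ad}_v]=0$. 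Consequently $\mathrm{ad}_{[u,v]}=0$, i.e. $[\mathfrak{n},\mathfrak{n}]\subseteq Z(\G)$. This already gives $\G^{(3)}=0$, so $\G$ is at most $3$-step solvable, and it makes $\mathfrak{n}$ at most $2$-step nilpotent.

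I would then pin down $\mathfrak{n}$. Using that $[\mathfrak{n},\mathfrak{n}]$ is central, for $u,v\in\mathfrak{n}$ one has $(\mathrm{ad}_u)^2 v=[u,[u,v]]=0$, whence choosing $w$ with $(\mathrm{ad}_u)^2 w$ spanning $\ell$ the identity $\bar\mu(v,e_+)=\bar\mu\big((\mathrm{ad}_u)^2 v,w\big)=0$ shows $\mathfrak{n}\subseteq\ell^\perp$. Feeding this back, each $\mathrm{ad}_u$, being a null rotation about $\ell$, maps $\ell^\perp$ into $\ell$; since $\mathfrak{n}\subseteq\ell^\perp$, every bracket $[u,v]=\mathrm{ad}_u v$ lies in $\ell$, so $\dim[\mathfrak{n},\mathfrak{n}]\le 1$. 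Therefore $\mathfrak{n}$ is either abelian or the direct sum of a Heisenberg algebra with an abelian ideal, according as $\dim[\mathfrak{n},\mathfrak{n}]$ is $0$ or $1$. The case $\dim[\mathfrak{n},\mathfrak{n}]=1$ with $\G$ non-nilpotent is exactly alternative (2), while the nilpotent instances land in alternative (1).

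The main obstacle I anticipate is this last separation, and in particular controlling the operators $\mathrm{ad}_x$ for $x\notin\mathfrak{n}$, on which the Cartan--Schouten condition imposes no skew-symmetry. Here the even-rank constraint must be used to exclude non-nilpotent metabelian algebras such as $\mathfrak{e}(2)$, where $\mathrm{ad}_{T_1}$ is a rank-one nilpotent and hence cannot be skew with respect to any nondegenerate form; and the indecomposability hypothesis must be invoked to prevent $\G$ from splitting off flat or lower-dimensional ideals, to bound the nilpotency class by $3$ in the nilpotent case, and to force the non-nilpotent examples into the oscillator-type shape underlying Medina's classification \cite{medina85}, so that the derived ideal is genuinely Heisenberg $\oplus$ abelian rather than a more general $2$-step algebra. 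Organizing this final case analysis, reconciling the semisimple part of the isotropy action with the single timelike direction permitted by signature $(1,n-1)$, is where the real work lies, and it is the part deferred to the companion paper.
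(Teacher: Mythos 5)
A preliminary remark on the ground truth: the paper contains \emph{no proof} of Theorem \ref{thm:lorentz-Cartan-Schouten} --- it is announced with the words ``whose proof will be published elsewhere'' --- so your proposal can only be measured against the statement itself, not against an argument in the text. Up to the determination of the structure of $\mathfrak{n}:=[\G,\G]$, your argument is sound and is surely the natural route: the reduction to the Lie algebra via Theorem \ref{theo:Cartan-Schouten-formula-Lie-Algebra} (whose forward direction the paper does prove), the nilpotency of $\mathrm{ad}_u$ for $u\in\mathfrak{n}$, the even-rank remark, the Lorentz rigidity ($N$ skew and $N^2=0$ force $N=0$), the normal form of a nonzero nilpotent skew operator (rank $2$, $N^3=0$, null line $\mathrm{Im}(N^2)$), the common null line via an Engel-type argument, and the consequences $[\mathfrak{n},\mathfrak{n}]\subseteq Z(\G)$, $\mathfrak{n}\subseteq\ell^{\perp}$, $\dim[\mathfrak{n},\mathfrak{n}]\le 1$, hence $\mathfrak{n}$ abelian or Heisenberg $\oplus$ abelian. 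These steps are all correct (modulo the undefined symbol $e_+$ and the somewhat loose flag argument, which can be made precise by induction on the $\mathrm{ad}(\mathfrak{n})$-fixed subspace).

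The genuine gap is your final paragraph, where the theorem's actual disjunction is asserted (``the nilpotent instances land in alternative (1)'') and the work is deferred. What you have proved --- derived length at most $3$ together with the structure of $\mathfrak{n}$ --- is strictly weaker than the conclusion: a filiform nilpotent algebra of class $4$ has abelian derived ideal and derived length $2$, so it satisfies everything you derived about $\mathfrak{n}$ while being of neither type (1) nor type (2); nothing in your write-up bounds the nilpotency class of $\G$ when $\G$ is nilpotent, and nothing excludes a non-nilpotent $\G$ with abelian $\mathfrak{n}$. However, the fix requires none of the machinery you anticipate (Medina's classification, indecomposability, analysis of the isotropy action); it is immediate from your own rigidity lemma. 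If $\mathfrak{n}$ is abelian, then for every $u\in\mathfrak{n}$ one has $\mathrm{Im}(\mathrm{ad}_u)\subseteq[\G,\G]=\mathfrak{n}\subseteq\ker(\mathrm{ad}_u)$, so $(\mathrm{ad}_u)^2=0$, whence $\mathrm{ad}_u=0$ by rigidity; thus $\mathfrak{n}\subseteq Z(\G)$ and $\G$ is $2$-nilpotent. This single observation settles the nilpotent case (indeed more sharply than the statement requires) and simultaneously kills every non-nilpotent candidate with abelian derived ideal, which is exactly your $\mathfrak{e}(2)$-type worry; it is also the same mechanism by which the paper proves the Riemannian Theorem \ref{theo:Riemann-Cartan-Schouten}. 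If instead $\mathfrak{n}$ is non-abelian, your own analysis gives $[\mathfrak{n},\mathfrak{n}]=\ell$ one-dimensional and central, $\mathfrak{n}\cong$ Heisenberg $\oplus$ abelian, and $\G^{(2)}=\ell\neq 0=\G^{(3)}$, i.e.\ precisely type (2). So the theorem follows from the lemmas you already have; the deferred ``final case analysis'' is a two-line argument, not a companion paper.
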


\subsubsection{The oscillator Lie algebras and Lie groups}
The oscillator Lie algebras $\G_\lambda$ (resp. Lie groups $G_\lambda$) are an important family of Lie groups in mathematics and physics with a rich geometry \cite{medina85}.
Let $\lambda:=(\lambda_1,\dots,\lambda_n)$ be in $\mathbb R^n$ with $0<\lambda_1\leq \lambda_2\leq \dots\leq\lambda_n$ and let $G_\lambda$ stand for 
$\mathbb R^{2n+2}=\mathbb R\times\mathbb R\times \mathbb C^n$ endowed with the Lie group structure given by the following product. Let $\sigma=(t,s,z_1,\dots z_n), \tau=(t',s',z_1',\dots z_n')$ $\in  G_{\lambda}$,

\begin{eqnarray}\sigma\tau&=&\Big(t+t',s+s'+\frac{1}{2}\displaystyle\sum_{j=1}^nIm(\bar z_j)\exp(it\lambda_j)z_j'\; ,
z_1+ \exp(it\lambda_1)z_1'\; ,
\nonumber\\
&& \dots,z_j+ \exp(it\lambda_j)z_j',\dots,z_n+\exp(it\lambda_n)z_n'\Big)\;.\end{eqnarray}

Set $z_j=x_j+i y_j$, where $x_j, y_j\in \mathbb R$ and $i^2=-1,$ so that $\sigma$  and $\tau$ are respectively identified with 
 the $(2n+2)$-uplets $(t,s,x_1, \dots, x_n,y_1, \dots,y_n),$ $(t',s',x'_1, \dots x'_n,y'_1, \dots,y'_n,).$ Hence, the above product now reads
 \beqn
\sigma\tau&=&\Big(t+t', s+s'+\frac{1}{2}\sum_{j=1}^n(x_jy'_j-y_jx'_j)\cos(\lambda_jt)
 +\frac{1}{2}\sum_{j=1}^n(x_jx'_j+y_jy'_j)\sin(\lambda_jt);  \cr 
         & & x_1+x'_1\cos(\lambda_1t)-y'_1\sin(\lambda_1t), \dots, 
  x_n+x'_n\cos(\lambda_nt)-y'_n\sin(\lambda_nt); \cr 
   & & \cr 
  & &  y_1+y'_1\cos(\lambda_1t)+x'_1\sin(\lambda_1t) , \dots, y_n+y'_n\cos(\lambda_nt)+x'_n\sin(\lambda_nt)\Big).
\eeqn 
 
The Lie algebra $\mathcal G_\lambda$ of the oscillator group $G_\lambda$ has a basis $\Big(e_{-1},e_0,e_1,\dots,e_{2n}\Big)$ in which its Lie bracket reads
\begin{eqnarray}[e_j,e_{n+j}] =e_0\;,\; [e_{-1},e_j] =\lambda_je_{n+j}\;, \; [e_{-1},e_{n+j}] =-\lambda_je_{j}\;,\;j=1,\dots,n.\end{eqnarray}
Following \cite{medina85}, up to a scalar factor, there is a unique adjoint invariant metric $\langle,\rangle_\lambda$ on $\mathcal G_\lambda$ given, for $x=\sum\limits_{j=-1}^{2n}x_je_j,$  $y=\sum\limits_{j=-1}^{2n}y_je_j,$ by
\begin{eqnarray}\langle x,y\rangle_\lambda&=& x_{-1} y_{0}+y_{-1}x_{0}+ \displaystyle\sum_{j=1}^n \frac{1}{\lambda_j} \Big(x_{j} y_{j}+ x_{n+j}y_{n+j}\Big).
\end{eqnarray}
Note that the ad-invariant metric $\langle, \rangle_\lambda$ is a Lorentz metric.
Following Theorem \ref{theo:Cartan-Schouten-formula-Lie-Algebra}, in order to find all Cartan-Schouten metrics, we just need to directly look at the Lie algebra level.
We apply Equation (\ref{eq:Cartan-Schouten}) to get all the Cartan-Schouten metrics on $\mathcal G_\lambda.$ Note that the derived ideal  $[\mathcal G_\lambda,\mathcal G_\lambda]$ of $\mathcal G_\lambda,$ is spanned by  $(e_0,e_1,\dots,e_{2n}).$
When Equation (\ref{eq:Cartan-Schouten}) is satisfied in a trivial way, so that no information on $\mu$ can be read off, we simply skip it.   

\noindent
$\bullet $  $0=\mu([e_j,e_{-1}],e_{-1})+\mu(e_{-1},[e_j,e_{-1}])=-2\lambda_j \mu(e_{n+j},e_{-1}),$

\noindent
$\bullet $  $0=\mu([e_j,e_{-1}],e_0)+\mu(e_{-1},[e_j,e_{0}])=-\lambda_j \mu(e_{n+j},e_0),$

\noindent
$\bullet $    $0=\mu([e_j,e_{-1}],e_k)+\mu(e_{-1},[e_j,e_k])= -\lambda_j \mu(e_{n+j},e_k),$  

\noindent
$\bullet $    $0=\mu([e_j,e_{-1}],e_{n+k})+\mu(e_{-1},[e_j,e_{n+k}])=-\lambda_j\mu(e_{n+j},e_{n+k})+\delta_{j,k}\mu(e_{-1},e_{0}), $

\noindent
$\bullet $   $ 0=\mu([e_j,e_k],e_{-1})+\mu(e_{k},[e_j,e_{-1}])=  -\lambda_j\mu(e_{k},e_{n+j}),$

\noindent
$\bullet$      $ 0=\mu([e_j,e_{k}],e_{n+p})+\mu(e_{k},[e_j,e_{n+p}])= \delta_{j,p}\mu(e_{k},e_{0})=0,$

\noindent
$\bullet $    $0=\mu([e_j,e_{n+k}],e_{0})+\mu(e_{n+k},[e_j,e_{0}])=\delta_{j,k}\mu(e_{0},e_{0})=0,$

\noindent
$\bullet $      $0=\mu([e_j,e_{n+k}],e_{n+p})+\mu(e_{n+k},[e_j,e_{n+p}])=  \delta_{j,k}\mu(e_{0},e_{n+p})+ \delta_{j,p}\mu(e_{n+k},e_{0}),$

\noindent
$\bullet $    $0=\mu([e_{n+j},e_{-1}],e_{-1})+\mu(e_{-1},[e_{n+j},e_{-1}])= 2\lambda_j\mu(e_{j},e_{-1}),$  

\noindent
$\bullet $    $0=\mu([e_{n+j},e_{-1}],e_{k})+\mu(e_{-1},[e_{n+j},e_{k}])= \lambda_j\mu(e_{j},e_{k})-\delta_{j,k}\mu(e_{-1},e_0),$

We summarize the above in the
\begin{theorem}\label{thm:metrics-oscilator}
A metric on $\G_\lambda$ is a Cartan-Schouten metric if and only if its nonzero coefficients are as follows
\beqn\label{Cartan-Schouten-oscillator} \mu(e_{j},e_{j})= \mu(e_{n+j},e_{n+j})=\frac{1}{\lambda_j}\mu(e_{-1},e_{0})\neq 0\;,\; \mu(e_{-1},e_{-1})\;,\; j=1,\dots,n.  \eeqn 
Every metric as in (\ref{Cartan-Schouten-oscillator})  is a Lorentzian. It is ad-invariant if and only if  $\mu(e_{-1},e_{-1})=0.$
\end{theorem}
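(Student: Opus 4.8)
The plan is to pass to the Lie algebra and argue purely linearly. By Theorem \ref{theo:Cartan-Schouten-formula-Lie-Algebra}, a Cartan-Schouten metric on $G_\lambda$ is the same datum as a metric $\mu$ on $\mathcal G_\lambda$ for which $\mathrm{ad}_u$ is skew-adjoint for every $u\in[\mathcal G_\lambda,\mathcal G_\lambda]$, i.e.\ which satisfies (\ref{eq:Cartan-Schouten}). Since $[\mathcal G_\lambda,\mathcal G_\lambda]=\mathrm{span}(e_0,e_1,\dots,e_{2n})$ and the condition is linear in $u$, it suffices to impose skew-adjointness of $\mathrm{ad}_u$ as $u$ runs over this spanning set; because $e_0$ is central we have $\mathrm{ad}_{e_0}=0$, so only the generators $e_j,e_{n+j}$ ($j=1,\dots,n$) produce constraints. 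Concretely I would insert each pair of basis vectors for $(y,z)$ into (\ref{eq:Cartan-Schouten}) with $[x_1,x_2]\in\{e_j,e_{n+j}\}$ (realized as $e_{n+j}=\tfrac1{\lambda_j}[e_{-1},e_j]$ and $e_j=-\tfrac1{\lambda_j}[e_{-1},e_{n+j}]$). This is exactly the list of bullet identities displayed just before the statement.

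Next I would read off and solve the resulting linear system on the coefficients $\mu(e_a,e_b)$. The bullets force every ``mixed'' coefficient to vanish, namely $\mu(e_{-1},e_j)=\mu(e_{-1},e_{n+j})=\mu(e_0,e_j)=\mu(e_0,e_{n+j})=\mu(e_0,e_0)=0$, together with $\mu(e_j,e_k)=\mu(e_{n+j},e_{n+k})=0$ for $j\ne k$ and $\mu(e_j,e_{n+k})=0$ for all $j,k$; pairing $e_j$ against $(e_{-1},e_{n+j})$ and $e_{n+j}$ against $(e_{-1},e_j)$ yields the diagonal relations $\lambda_j\mu(e_j,e_j)=\lambda_j\mu(e_{n+j},e_{n+j})=\mu(e_{-1},e_0)$. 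The only entries left unconstrained are $\mu(e_{-1},e_0)$ and $\mu(e_{-1},e_{-1})$. Finally, nondegeneracy of $\mu$ forces $\mu(e_{-1},e_0)\ne0$: otherwise all diagonal entries on $\mathrm{span}(e_1,\dots,e_{2n})$ vanish and $e_0$ pairs trivially with the whole algebra, placing $e_0$ in the radical. This gives precisely (\ref{Cartan-Schouten-oscillator}).

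For the Lorentzian claim I would note that, in the basis $(e_{-1},e_0,e_1,\dots,e_{2n})$, the Gram matrix of any metric of the form (\ref{Cartan-Schouten-oscillator}) is block diagonal: a $2\times2$ block $\begin{pmatrix}\mu(e_{-1},e_{-1}) & \mu(e_{-1},e_0)\\ \mu(e_{-1},e_0) & 0\end{pmatrix}$ on $\mathrm{span}(e_{-1},e_0)$, and a diagonal $2n\times2n$ block on $\mathrm{span}(e_1,\dots,e_{2n})$ whose entries are the numbers $\mu(e_{-1},e_0)/\lambda_j$, each occurring twice. The $2\times2$ block has determinant $-\mu(e_{-1},e_0)^2<0$, hence signature $(1,1)$; the diagonal block is definite because all its entries share the sign of $\mu(e_{-1},e_0)$ (recall $\lambda_j>0$). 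Summing, the total signature is $(2n+1,1)$ or $(1,2n+1)$ according to the sign of $\mu(e_{-1},e_0)$, so $\mu$ is Lorentzian in either case.

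It remains to characterize ad-invariance, and this is the step I expect to demand the most care. Ad-invariance requires $\mathrm{ad}_u$ to be skew-adjoint for \emph{every} $u\in\mathcal G_\lambda$, whereas the Cartan-Schouten condition only covers $u\in[\mathcal G_\lambda,\mathcal G_\lambda]$; since $\{e_{-1}\}$ completes the derived-ideal basis to a basis of $\mathcal G_\lambda$, the single extra requirement is that $\mathrm{ad}_{e_{-1}}$ be skew-adjoint. I would therefore expand $\mu(\mathrm{ad}_{e_{-1}}y,z)+\mu(y,\mathrm{ad}_{e_{-1}}z)=0$ over all basis pairs, using $\mathrm{ad}_{e_{-1}}e_j=\lambda_je_{n+j}$ and $\mathrm{ad}_{e_{-1}}e_{n+j}=-\lambda_je_j$, and isolate the condition this places on $\mu(e_{-1},e_{-1})$. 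The delicate point, and the crux of the statement, is to track precisely where $\mu(e_{-1},e_{-1})$ enters these identities, given that the image of every adjoint map lands inside the $e_{-1}$-free derived ideal; pinning down the status of this last coefficient is where the argument must be carried out with the greatest care.
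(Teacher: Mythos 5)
Your reduction to the Lie algebra via Theorem \ref{theo:Cartan-Schouten-formula-Lie-Algebra} and the solution of the resulting linear system is exactly the paper's own proof (the bullet list preceding the theorem is precisely the computation you describe), and your two additions are correct and welcome: the nondegeneracy argument forcing $\mu(e_{-1},e_0)\neq 0$ (if it vanished, $e_0$ would lie in the radical) and the block-diagonal signature count $\bigl(2\times 2$ block of determinant $-\mu(e_{-1},e_0)^2<0$, plus a definite diagonal block$\bigr)$ for the Lorentzian claim, both of which the paper asserts without proof.

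The gap is your last step, and your own hesitation there is the heart of the matter: the computation you propose, once carried out, places \emph{no} condition on $\mu(e_{-1},e_{-1})$. For every $u\in\G_\lambda$ and all $y,z$, the brackets $[u,y]$ and $[u,z]$ lie in $[\G_\lambda,\G_\lambda]=\mathrm{span}(e_0,e_1,\dots,e_{2n})$, which has no $e_{-1}$-component, so the coefficient $\mu(e_{-1},e_{-1})$ can never occur in any identity $\mu([u,y],z)+\mu(y,[u,z])=0$. Concretely, for $u=e_{-1}$ the only identities that are not trivially zero for a metric of the form (\ref{Cartan-Schouten-oscillator}) are those with $y=e_j$, $z=e_{n+j}$, which read $\lambda_j\mu(e_{n+j},e_{n+j})-\lambda_j\mu(e_j,e_j)=0$ and hold automatically. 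Equivalently, $e_{-1}^*\otimes e_{-1}^*$ is itself an ad-invariant (degenerate) symmetric bilinear form, and the family (\ref{Cartan-Schouten-oscillator}) is exactly Medina's metric plus arbitrary multiples of it; hence \emph{every} Cartan-Schouten metric on $\G_\lambda$ is ad-invariant, and the theorem's final biconditional fails in the ``only if'' direction --- it cannot be proven by your route or any other. The apparent conflict with the quoted uniqueness of $\langle\,,\rangle_\lambda$ ``up to a scalar factor'' dissolves once one notes that this uniqueness is up to automorphism: the automorphism $e_{-1}\mapsto e_{-1}+c\,e_0$, fixing the remaining basis vectors, pulls the case $\mu(e_{-1},e_{-1})=0$ back to $\mu(e_{-1},e_{-1})=2c\,\mu(e_{-1},e_0)$. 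So your instinct to flag this as the delicate point was right, but the honest conclusion of the computation is that the claimed characterization is false (only its ``if'' direction holds), not that it needs more care.
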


\subsection{Case where the exponential map is a diffeomorphism}

For many Lie groups, the exponential map  $\exp: \G\to G,$ is a diffeomorphism. This  is the case for every connected and  simply connected nilpotent Lie group and more generally, every  connected and  simply connected completely solvable Lie  group. More precisely, following the well known works of Dixmier and Saito in 1957, the exponential map is a diffeomorphism, if and only if $G$ is  a connected and  simply connected solvable Lie group  that does not  contain a closed subgroup isomorphic to the circle, the universal cover of the special linear group $SL(2,\mathbb R)$  of $2$ by $2$ real matrices with determinant $1,$
 the universal cover of the group $E(2):=O(2)\ltimes \mathbb R^2$ of rigid motions of the Euclidean $2$-space, or the 4-dimensional connected and simply connected oscillator Lie group with Lie algebra $\bar{\mathfrak{e}}$ having a basis $(x,y,H,z),$ with Lie bracket $[H,x]=y,$ $[H,y]=-x,$ $[x,y]=z.$  
Equivalently, the exponential map is a diffeomorphism, if and only if $G$ is  a connected and  simply connected solvable Lie group whose Lie algebra $\G$ does not  contain a Lie subalgebra isomorphic  the Lie algebra $\mathfrak{e}$ of $E(2):=O(2)\ltimes \mathbb R^2$ or $\bar{\mathfrak{e}}.$ 
For Lie groups containing a closed subgroup isomorphic to one of the aforementioned subgroups, the exponential map is not injective. The exponential map could be  surjective although it may even not be a local diffeomorphism, and may have a cut locus. This is the case for $SU(2).$
We announce the following result whose proof will be published elsewhere and which will be needed for the proof of Theorem \ref{theo:2-nilpotent}. It allows to directly get the value $\mu_\sigma$ of a Cartan-Schouten metric $\mu$ at any point $\sigma\in G$ from the sole value $\mu_\epsilon$ at the unit $\epsilon,$ exactly like for invariant metrics.

\begin{theorem}\label{theo:exp-diffeo} Let $G$ be a Lie group, $\G$ its Lie algebra, $\epsilon$ its unit. If $\exp: \G\to G$ is a diffeomorphism and $G$ has a Cartan-Schouten metric $\mu$, then setting $\bar \mu:=\mu_\epsilon$ and $\log:=\exp^{-1},$  we have, for every left invariant vector fields $x^+,$ $y^+$ on $G$ and $\sigma\in G,$
\beqn\label{eq:exp-diffeo} (\mu (x^+,y^+))(\sigma)&=&\sum\limits_{p,q=0}^{\infty} \frac{1}{p!q!2^{p+q} }   \bar\mu \Big(ad_{\log\sigma}^p\; x\; , \;ad_{\log\sigma}^q\;  y\Big).
\eeqn
 Conversely, if  $\exp: \G\to G$ is a diffeomorphism, then every metric  $\bar \mu$ on $\mathcal G$ satisfying (\ref{eq:Cartan-Schouten})  gives rise to a Cartan-Schouten metric  $\mu$ on $G$, via (\ref{eq:exp-diffeo}).
\end{theorem}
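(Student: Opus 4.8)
The plan is to prove the two implications separately: the direct one by explicitly integrating the parallel transport of $\nabla$ along one-parameter subgroups, and the converse one by a holonomy argument that sidesteps the hardest computation.

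\emph{Direct implication.} Since $\exp$ is a diffeomorphism, every $\sigma\in G$ is uniquely $\sigma=\exp(X)$ with $X=\log\sigma$, and the curve $\gamma(t)=\exp(tX)$ is the $\nabla$-geodesic joining $\epsilon$ to $\sigma$. First I would compute the $\nabla$-parallel transport along $\gamma$. Writing a field along $\gamma$ in the left-invariant trivialization as $V(t)=(dL_{\gamma(t)})v(t)$ with $v(t)\in\G$, and using that $\gamma(t)^{-1}\dot\gamma(t)=X$ is constant together with $\nabla_X v=\frac12[X,v]$, the parallel transport equation collapses to $\dot v+\frac12\,ad_X\,v=0$, whence $v(t)=e^{-\frac{t}{2}ad_X}v(0)$ and the transport operator is $P_\sigma=(dL_\sigma)\circ e^{-\frac12 ad_X}\colon T_\epsilon G\to T_\sigma G$. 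Because $\mu$ is Cartan-Schouten, $\nabla\mu=0$, so $P_\sigma$ is a $\mu$-isometry, i.e. $\mu_\sigma(P_\sigma u,P_\sigma w)=\bar\mu(u,w)$. Writing $x^+(\sigma)=(dL_\sigma)x=P_\sigma\big(e^{\frac12 ad_X}x\big)$ and likewise for $y$ gives $(\mu(x^+,y^+))(\sigma)=\bar\mu\big(e^{\frac12 ad_X}x,\,e^{\frac12 ad_X}y\big)$, and expanding the two operator exponentials in powers of $ad_X=ad_{\log\sigma}$ yields exactly (\ref{eq:exp-diffeo}). Note this argument also proves uniqueness: any Cartan-Schouten metric whose value at $\epsilon$ is $\bar\mu$ must be the one produced by the formula.

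\emph{Converse implication.} I would first check that (\ref{eq:exp-diffeo}) genuinely defines a smooth metric of the same signature as $\bar\mu$: the double series is the power-series expansion of $\bar\mu\big(e^{\frac12 ad_X}x,e^{\frac12 ad_X}y\big)$, the endomorphism $e^{\frac12 ad_X}$ of the finite-dimensional $\G$ is always convergent, invertible, and smooth in $X=\log\sigma$, and every tangent vector at $\sigma$ is uniquely $x^+(\sigma)$; pulling $\bar\mu$ back by an invertible map preserves nondegeneracy and signature. The substance is $\nabla\mu=0$. Here I would exploit two facts about the canonical connection, both one-line consequences of the Jacobi identity applied to left-invariant fields: the curvature is $R^\nabla(u,v)w=-\frac14[[u,v],w]=-\frac14\,ad_{[u,v]}w$, and it is \emph{parallel}, $\nabla R^\nabla=0$. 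With $\nabla R^\nabla=0$, the Ambrose--Singer theorem identifies the holonomy algebra of $\nabla$ at $\epsilon$ with the Lie subalgebra of $\mathrm{End}(\G)$ generated by the curvature endomorphisms $\{-\tfrac14\,ad_{[u,v]}\}$; since $ad$ is a Lie-algebra homomorphism and $[\G,\G]$ is an ideal, this algebra is contained in $\{ad_w:w\in[\G,\G]\}$.

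Now condition (\ref{eq:Cartan-Schouten}) says precisely that $ad_w$ is $\bar\mu$-skew for every $w\in[\G,\G]$ (this is the skew-symmetry of $R^\nabla$ with respect to $\bar\mu$, consistent with Proposition \ref{prop:curvature-Levi-Civita}). As the $\bar\mu$-skew endomorphisms $\mathfrak{so}(\bar\mu)$ form a Lie subalgebra, the holonomy algebra lies in $\mathfrak{so}(\bar\mu)$; and because $\exp$ being a diffeomorphism forces $G\cong\R^n$ to be simply connected, the holonomy group is connected and contained in $O(\bar\mu)$. Hence $\bar\mu$ is holonomy-invariant, so by the holonomy principle it extends to a parallel metric $\mu$ on $G$ with $\mu_\epsilon=\bar\mu$; this $\mu$ is a Cartan-Schouten metric, and by the direct part it is given by (\ref{eq:exp-diffeo}).

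\emph{Main obstacle.} In the direct part the delicate point is getting the transport operator right, namely the exponent $-\frac12\,ad_X$ and the direction of $P_\sigma$, which controls the sign and the factor $2^{p+q}$ in (\ref{eq:exp-diffeo}). In the converse the crux is justifying the two curvature identities, above all $\nabla R^\nabla=0$, and invoking Ambrose--Singer in this parallel-curvature setting so that the holonomy algebra is generated purely by the base-point curvature. A more computational alternative for the converse would verify (\ref{eq:parallel}) directly for the tensor defined by (\ref{eq:exp-diffeo}); there the hard part is differentiating $s\mapsto\log(\sigma\exp(sz))$, whose derivative involves $\frac{ad_X}{1-e^{-ad_X}}$, together with the non-commutative derivative of $X\mapsto e^{\frac12 ad_X}$, and it is exactly this calculation that the holonomy argument is designed to avoid.
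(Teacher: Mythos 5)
Your proposal cannot be checked against the paper's own argument for the simple reason that there is none: the authors explicitly announce Theorem \ref{theo:exp-diffeo} with the remark that its proof ``will be published elsewhere.'' So your argument fills a genuine gap rather than paralleling an existing proof, and it is correct. In the direct part, the parallel transport equation along $\gamma(t)=\exp(tX)$ in the left trivialization is indeed $\dot v+\tfrac12\,\mathrm{ad}_X\,v=0$, giving $P_\sigma=(dL_\sigma)\circ e^{-\frac12 \mathrm{ad}_X}$, hence $x^+(\sigma)=P_\sigma\big(e^{\frac12 \mathrm{ad}_X}x\big)$ and $\mu_\sigma(x^+,y^+)=\bar\mu\big(e^{\frac12 \mathrm{ad}_X}x,\,e^{\frac12 \mathrm{ad}_X}y\big)$, whose double expansion reproduces (\ref{eq:exp-diffeo}) with the correct factor $2^{-(p+q)}$ and no alternating signs; a reassuring consistency check is that in the $2$-nilpotent case ($\mathrm{ad}_X^2=0$) your formula collapses exactly to (\ref{eq:Cartan-chouten-2-nilpotent}). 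In the converse, both curvature identities follow from the Jacobi identity ($R(u,v)w=-\tfrac14[[u,v],w]$ and $\nabla R=0$), and with parallel curvature the Ambrose--Singer theorem does reduce the holonomy algebra to the span of the curvature endomorphisms at $\epsilon$, which lies in $\{\mathrm{ad}_w: w\in[\G,\G]\}$; condition (\ref{eq:Cartan-Schouten}) is precisely $\bar\mu$-skewness of these operators (an equivalence the paper itself records after Theorem \ref{theo:Cartan-Schouten-formula-Lie-Algebra}), simple connectedness of $G\cong\mathbb{R}^n$ makes the holonomy group connected, and the holonomy principle plus your direct-part uniqueness identifies the resulting parallel metric with the tensor defined by (\ref{eq:exp-diffeo}). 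The only point worth spelling out in a write-up is the exact form of Ambrose--Singer you invoke: in general the holonomy algebra is spanned by curvature endomorphisms conjugated by parallel transport along arbitrary paths, and it is precisely $\nabla R=0$ that lets you drop the conjugation and work at the base point alone. Your route is, incidentally, consonant with the authors' toolkit, since they already cite the Ambrose--Singer theorem in connection with Proposition \ref{prop:curvature-Levi-Civita}.
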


\subsection{Cartan-Schouten metrics on 2-nilpotent Lie groups \label{sect:2-nilpotent}}

Recall that a Lie group $G$ is said to be 2-step nilpotent if its Lie algebra, say $\mathcal G,$ is 2-step nilpotent (2-nilpotent, for short). Equivalently, the derived  ideal $ [\mathcal G,\mathcal G]$ is contained in the center $\mathfrak{Z}$ of $\G.$ 
We consider here Lie algebras $\G$ which, as vector spaces, split as  a direct sum $\mathcal G=V\oplus \mathfrak{Z}$ of two subspaces $V$ and $\mathfrak{Z},$ such that $[\mathcal G,\mathcal G]=[\mathcal G,V]=[V,V]= \mathfrak{Z}$ and  $[\mathcal G, \mathfrak{Z}]={\mathbf 0}.$ Every nondecomposable 2-nilpotent Lie algebra  lies in that category.  The 2-nilpotent Lie groups are the nonabelian Lie groups that are the closest possible to being  Abelian. In that regard, 
we expect them to be, in applications to Information Geometry, Statistics, Machine Learning, amongst the most important and handiest Lie groups with Cartan-Schouten metrics. On the other hand, they enjoy a very rich geometry and play special important roles in many areas  of Mathematics \cite{eberleinbook, eberleinENS,eberleinTransAMS}. 
Carnot groups of step $2$, H-type Lie groups, Heisenberg groups, are special  cases of   2-nilpotent Lie groups 
\cite{garcia, godoy, kumariKNN}. 
\begin{theorem}\label{theo:SetOfCartan-SchoutenMetrics}On a connected 2-nilpotent Lie group of dimension $n$, the set of Cartan-Schouten metrics is a connected and simply connected manifold of dimension $\frac{1}{2}n(n+1).$ 
\end{theorem}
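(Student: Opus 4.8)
The plan is to transfer the entire question to the Lie algebra $\G$ by means of Theorem \ref{theo:Cartan-Schouten-formula-Lie-Algebra}, which for any connected $G$ identifies the Cartan-Schouten metrics $\mu$ on $G$ with the metrics $\bar\mu$ on $\G$ solving (\ref{eq:Cartan-Schouten}), the correspondence being $\bar\mu=\mu_\epsilon$. So first I would record this bijection. In the simply connected case it is moreover made fully explicit by the recovery formula (\ref{eq:exp-diffeo}) of Theorem \ref{theo:exp-diffeo} (since then $\exp\colon\G\to G$ is a diffeomorphism), which simultaneously shows that the passage $\bar\mu\mapsto\mu$ and its inverse $\mu\mapsto\mu_\epsilon$ are smooth, so that, once both sides carry their natural structure, the correspondence is a diffeomorphism.

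The decisive step is to observe that, for a $2$-nilpotent $\G$, equation (\ref{eq:Cartan-Schouten}) imposes no condition whatsoever. By the remark following Theorem \ref{theo:Cartan-Schouten-formula-Lie-Algebra}, (\ref{eq:Cartan-Schouten}) is equivalent to $ad_u$ being skew-symmetric with respect to $\bar\mu$ for every $u\in[\G,\G]$. But $2$-nilpotency means exactly $[\G,\G]\subseteq\mathfrak{Z}$, so any $u=[x_1,x_2]$ is central, whence $ad_u=0$ and both terms $\bar\mu([[x_1,x_2],y],z)$ and $\bar\mu(y,[[x_1,x_2],z])$ vanish identically. Consequently every symmetric bilinear form on $\G$ satisfies (\ref{eq:Cartan-Schouten}), and the bijection of the first step identifies the set of Cartan-Schouten metrics on $G$ with the set of metrics on the $n$-dimensional vector space $\G$.

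It then remains only to describe the latter set. Fixing a basis of $\G$ sends a symmetric bilinear form to its Gram matrix, giving a linear isomorphism onto $\mathrm{Sym}(n,\R)$, the space of real symmetric $n\times n$ matrices, which is a vector space of dimension $\frac{1}{2}n(n+1)$. This delivers at once the manifold structure and the dimension count; and since a vector space is contractible, connectedness and simple-connectedness follow immediately. Thus the core of the proof is really the two-line computation of the preceding paragraph together with the linear identification with $\mathrm{Sym}(n,\R)$.

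The point I expect to need the most care is the nondegeneracy built into the word \emph{metric}: honest metrics correspond to the open locus $\{\det\neq 0\}$ in $\mathrm{Sym}(n,\R)$, which breaks into the $n+1$ signature strata, so the clean contractibility argument as stated applies to the \emph{full} space of symmetric forms rather than to the nondegenerate ones. To reconcile this with the ``connected and simply connected'' conclusion I would make explicit which reading is intended: either one records the space at the level of symmetric bilinear forms (where contractibility is immediate), or one argues stratum by stratum, noting in particular that for the definite signature the positive-definite cone is convex and hence contractible, which gives the claim directly. Settling this reading is the main obstacle, and I would resolve it before committing to the final topological statement.
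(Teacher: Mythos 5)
Your proposal is correct where it commits itself, and it follows essentially the same route as the paper: the paper's proof invokes Theorem \ref{theo:2-nilpotent}, whose content is precisely your reduction (for a 2-nilpotent Lie algebra every $u\in[\G,\G]$ is central, so condition (\ref{eq:Cartan-Schouten}) is vacuous and Cartan-Schouten metrics are in bijection with their values $\mu_\epsilon$ on $\G$), and then identifies the set of Cartan-Schouten metrics with the nonsingular symmetric $n\times n$ matrices, a smooth manifold of dimension $\frac{1}{2}n(n+1)$.

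The reservation you raise at the end is not a defect of your argument but a genuine gap in the theorem as stated: the paper's proof stops at the identification above and offers no argument at all for connectedness or simple connectedness. Indeed, the nonsingular symmetric matrices form a manifold with $n+1$ connected components indexed by signature, so the set is disconnected to begin with; moreover, a component of signature $(p,n-p)$ with $0<p<n$ fibers over the Grassmannian $O(n)/\bigl(O(p)\times O(n-p)\bigr)$ with convex fibers (send a form to the positive eigenspace of its Gram matrix), hence is homotopy equivalent to it and is not simply connected --- for instance, on the Heisenberg group $\mathbb H_3$ the Lorentzian stratum is homotopy equivalent to $\mathbb{RP}^2$. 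So the topological claim holds only under one of the two readings you propose: either one counts all symmetric bilinear forms (degenerate ones included), where contractibility is immediate, or one fixes a definite signature, where the cone is convex. Under the paper's own convention that ``metric'' means nondegenerate of arbitrary signature, the conclusion ``connected and simply connected'' is false, and your instinct to settle this reading before committing is exactly the right one.
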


In Theorem \ref{theo:Riemann-Cartan-Schouten}, we have shown that   2-nilpotent Lie groups are the only non-Abelian solvable Lie groups with 
at least one positive (or negative) definite Cartan-Schouten metric. Theorem \ref{theo:2-nilpotent} shows that a 2-nilpotent Lie group has infinitely many Cartan-Schouten of any desired signature $(p,n-p),$ for any integer $0\leq p\le n.$ 

\begin{theorem}\label{theo:2-nilpotent}Let $G$ be a 2-nilpotent Lie group, $\mathcal G$ its Lie algebra, $\dim G=n$. There is a 1-1 correspondance between the set of  left invariant metrics of signature $(p,n-p)$ and the set of Cartan Schouten metrics of signature $(p,n-p)$ on $G,$ for any integer $p,$ with $0\leq p\le n.$  
Namely, every metric  $\bar \mu$ on $\mathcal G$ gives rise to a flat Cartan-Schouten metric  $\mu$ of the same signature on any connected Lie group with Lie algebra $\mathcal G$. In particular, on the corresponding connected and simply connected Lie group $\tilde G$, $\mu$ is given by
 \beqn \label{eq:Cartan-chouten-2-nilpotent}(\mu (x^+,y^+))(\sigma)&=&\bar \mu (x ,  y) +\frac{1}{2}    \bar \mu \Big([\log\sigma, x] ,  y\Big)+
\frac{1}{2}    \bar \mu \Big( x , [\log\sigma,y]\Big) 
 \nonumber\\
&& + 
\frac{1}{4}    \bar \mu \Big([\log\sigma, x]\; ,\; [\log\sigma,y]\Big) ,
\eeqn
  for every $\sigma\in \tilde G$ and $x,y\in\G,$ where $\log  : \tilde G\to \mathcal G,$   $\log:=\exp^{-1}.$
\end{theorem}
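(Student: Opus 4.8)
The plan is to reduce everything to the two announced results, Theorem \ref{theo:Cartan-Schouten-formula-Lie-Algebra} and Theorem \ref{theo:exp-diffeo}, by exploiting the fact that condition (\ref{eq:Cartan-Schouten}) becomes vacuous in the 2-nilpotent setting. The decisive observation is that, since $[\mathcal G,\mathcal G]\subseteq\mathfrak Z$, any bracket $[x_1,x_2]$ is central, so $[[x_1,x_2],y]=0$ for all $x_1,x_2,y\in\mathcal G$. Consequently both terms of (\ref{eq:Cartan-Schouten}) vanish identically, and \emph{every} metric $\bar\mu$ on $\mathcal G$ satisfies (\ref{eq:Cartan-Schouten}) --- that is, every $\mathrm{ad}_u$, $u\in[\mathcal G,\mathcal G]$, is trivially skew-adjoint, being the zero map.

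First I would establish the 1-1 correspondence. By Theorem \ref{theo:Cartan-Schouten-formula-Lie-Algebra}, Cartan-Schouten metrics on $G$ correspond bijectively to metrics $\bar\mu$ on $\mathcal G$ satisfying (\ref{eq:Cartan-Schouten}), with matching signature and with $\bar\mu=\mu_\epsilon$. By the previous paragraph this constraint is empty, so Cartan-Schouten metrics of signature $(p,n-p)$ are in bijection with \emph{arbitrary} metrics of signature $(p,n-p)$ on $\mathcal G$. Since a left invariant metric is likewise uniquely determined by, and freely prescribed by, its value at $\epsilon$, composing the two identifications ``value at $\epsilon$'' yields the desired bijection between left invariant metrics and Cartan-Schouten metrics for each fixed signature $(p,n-p)$, $0\le p\le n$. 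This also recovers the dimension count of Theorem \ref{theo:SetOfCartan-SchoutenMetrics}.

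Next I would derive the explicit formula (\ref{eq:Cartan-chouten-2-nilpotent}) on the simply connected group $\tilde G$. Being simply connected and nilpotent, $\tilde G$ has $\exp:\mathcal G\to\tilde G$ a diffeomorphism, so Theorem \ref{theo:exp-diffeo} applies and gives $\mu$ as the series (\ref{eq:exp-diffeo}). Here 2-nilpotency enters a second time: for any $u\in\mathcal G$ one has $\mathrm{ad}_u^2 v=[u,[u,v]]=0$ since $[u,v]$ is central, hence $\mathrm{ad}_{\log\sigma}^p=0$ for all $p\ge2$. The double series therefore truncates to the four terms $(p,q)\in\{0,1\}^2$, and reading off the coefficients $\tfrac{1}{p!\,q!\,2^{p+q}}$ produces exactly (\ref{eq:Cartan-chouten-2-nilpotent}).

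Finally, flatness follows from a direct curvature computation for the canonical connection $\nabla_x y=\tfrac12[x,y]$: the Jacobi identity gives $R(x,y)z=-\tfrac14[[x,y],z]$, which vanishes identically on a 2-nilpotent Lie algebra, so the Cartan-Schouten connection --- hence the metric $\mu$ --- is flat. I do not anticipate a serious obstacle: the whole content is the twofold use of 2-nilpotency, namely the vacuity of (\ref{eq:Cartan-Schouten}) and the termination of the series. The only points demanding care are checking that Theorem \ref{theo:Cartan-Schouten-formula-Lie-Algebra} preserves the signature so that the bijection respects $(p,n-p)$, and noting that for a non-simply-connected $G$ the existence half still holds via Theorem \ref{theo:Cartan-Schouten-formula-Lie-Algebra} even though $\exp$ need not be a diffeomorphism, so the closed form (\ref{eq:Cartan-chouten-2-nilpotent}) is asserted only on $\tilde G$.
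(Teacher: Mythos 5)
Your proof is correct and follows essentially the same route as the paper: both reduce the statement to Theorem \ref{theo:Cartan-Schouten-formula-Lie-Algebra} and Theorem \ref{theo:exp-diffeo}, exploiting the vacuity of condition (\ref{eq:Cartan-Schouten}) in the 2-nilpotent case and then specializing the series formula (\ref{eq:exp-diffeo}). In fact you make explicit two details the paper leaves implicit --- the truncation $\mathrm{ad}^p_{\log\sigma}=0$ for $p\ge 2$, and the flatness check via $R(x,y)z=-\tfrac14[[x,y],z]$ --- so your write-up is, if anything, more complete than the paper's own proof.
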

\begin{proof}This is a direct consequence of more general results (see   Theorem \ref{theo:Cartan-Schouten-formula-Lie-Algebra}  and  Theorem \ref{theo:exp-diffeo}) whose proofs will be published elsewhere.  Every left invariant metrics $\bar \mu^+$ of signature $(p,n-p)$ on G is uniquely given by its value $\bar\mu$ at the unit $\epsilon .$
Since $G$ is 2-nilpotent, every metric $\bar\mu$ in $\G$ satisfies (\ref{eq:Cartan-Schouten}) in  Theorem \ref{theo:Cartan-Schouten-formula-Lie-Algebra}.
 We apply  Formula (\ref{eq:exp-diffeo}) of Theorem \ref{theo:exp-diffeo} to get (\ref{eq:Cartan-chouten-2-nilpotent}) which defines a unique Cartan-Schouten metric  of signature $(p,n-p)$ on $G.$ Conversely, if $\mu$ is a Cartan-Schouten metric on $G$, its value $\bar\mu$ at the unit gives rise to a unique  left invariant metric $\bar \mu^+$  on $G$ which, by construction, has the same signature as $\mu.$   
\end{proof}
One also has the following
\blem\cite{bi-invariant-and-noninvariant-metrics-ghanam-hindeleh-thompson}
Let $D\in\mathfrak{gl}(n,\mathbb R).$ Consider the semidirect sum $\mathbb RD\ltimes \mathbb R^n.$ A Lie group with Lie algebra $\mathbb RD\ltimes \mathbb R^n$  has a Cartan-Schouten metric if and only if it is 2-step nilpotent, or equivalently $D^2=0.$
\elem
So in particular, the group $O(2,\mathbb R)\ltimes \mathbb R^2$ of rigid displacements of the Euclidean plane, does not have any Cartan-Schouten metric.

\subsubsection{Proof of  Theorem \ref{theo:SetOfCartan-SchoutenMetrics}}
Let $G$ be a $2$-nilpotent Lie group and $\G$ its Lie algebra. We use Theorem  \ref{theo:2-nilpotent} to build  a biunivoque correspondence between  the set of left invariant metrics  and that of Cartan-Schouten metrics on $G.$ A left invariant metric and a  Cartan-Schouten metric are both uniquely given by their respective value at the unit $\epsilon$ of $G.$  So the correspondence maps any left invariant metric to the unique Cartan-Schouten metric which coincides with it at $\epsilon,$ and vice versa.  Thus, the space of 
Cartan-Schouten metrics  on a 2-nilpotent Lie group of                dimenion n, can be identified      with the set   of nonsingular symmetric matrices        in    $\mathfrak{gl}(n, \mathbb R),$  which    is       a smooth manifold of dimension $\frac{n(n+1)}{2}.$ 

\subsubsection{The Heisenberg  Lie group $\mathbb H_{2n+1}$\label{Heisenberg}}
Consider the $(2n+1)$-dimensional 
 Heisenberg group  $\mathbb H_{2n+1}:=\Big\{ \sigma=
\begin{pmatrix}1&x&z
\\ 0&1&y^T\\ 
0&0&1
\end{pmatrix}, \; x,\; y \in\mathbb R^n,\; z\in\mathbb R\Big\}$. Its Lie algebra $\mathcal{H}_{2n+1}$ is spanned by the $(n+2)\times (n+2)$ elementary matrices  $e_j:=E_{1,j+1},$ $e_{n+j}:=E_{j+1,n+2},$ $e_{2n+1}:=E_{1,n+2},$ $j=1,\dots,n.$  So the Lie bracket reads $[e_j,e_{n+j}]=e_{2n+1}.$ 
The fact that the exponential map is a diffeomorphism  can be written explicitly as $\sigma=\exp\Big(\sum\limits_{i=1}^n(x_ie_i+y_ie_{n+i})+(z-\frac{1}{2}xy^T)e_{2n+1}\Big),$ so $\log\sigma=\sum\limits_{i=1}^n(x_ie_i+y_ie_{n+i})+(z-\frac{1}{2}xy^T)e_{2n+1}.$
We identify $\mathbb H_{2n+1}$ with $\mathbb R^{2n+1}$, with the multiplication 
 $(x,y,z) (x',y',z')=(x+x',y+y',z+z'+xy'^T).$ 
 
The left-invariant vector fields corresponding to $e_j$, $e_{n+j},$ 
$e_{2n+1},$ are respectively 
\beqn
e_j^+=\frac{\partial}{\partial_{x_j}}, \quad  
e_{n+j}^+=\frac{\partial}{\partial_{y_j}}+x_j\frac{\partial}{\partial_z}, \quad  
e_{2n+1}^+=\frac{\partial}{\partial_z}, 
\eeqn
so we have $\frac{\partial}{\partial_{y_j}}=e_{n+j}^+-x_je_{2n+1}^+,$  whereas
 the left-invariant $1$-forms associated to the dual basis  $e_j^*,$ $e_{n+j}^*,$ $e_{2n+1}^*,$  
are:
$
(e_j^*)^+=dx_j, \quad 
(e_{n+j}^*)^+=dy_j,\quad 
(e_{2n+1}^*)^+=dz-x_jdy_j,
$ 
 $j=1,\dots,n.$ 
The Heisenberg Lie group does not have any biinvariant metric, since $[\mathcal H_{2n+1}, \mathcal H_{2n+1}]=\mathbb R e_{2n+1}$ is also the center of $\mathcal H_{2n+1}.$   
However, it does possess infinitely many  Cartan-Schouten metrics $\mu$ of any desired signature. 
On applying Formula (\ref{eq:Cartan-chouten-2-nilpotent}) of Theorem \ref{theo:2-nilpotent}, we explicitly give the expression of any such  metrics $\mu$ on $\mathbb{H}_{2n+1}.$  
We denote the constants $\bar\mu(e_p,e_q)$  by  $k_{p,q},$ $p,q=1,\dots,2n+1,$ where again $\bar \mu:=\mu_\epsilon.$  Next we give the explicit expression  $[\log\sigma,x],$ for every $\sigma\in\mathbb H_{2n+1}$ and $x$ in the basis ($e_1,\dots,e_{2n+1}$) of $\mathcal H_{2n+1}:$
\beqn[\log\sigma,e_i]&=&\sum\limits_{k=1}^n[(x_ke_k+y_ke_{n+k})+(z-\frac{1}{2}xy^T)e_{2n+1},e_i]=-y_ie_{2n+1},
\nonumber
\\ 
\;[\log\sigma,e_{n+i}]&=&\sum\limits_{k=1}^n[(x_ke_k+y_ke_{n+k})+(z-\frac{1}{2}xy^T)e_{2n+1},e_{n+i}]=x_ie_{2n+1}.
\eeqn
So we now get:

$(\mu(e_{2n+1}^+,e_{2n+1}^+))(\sigma)=\bar \mu (e_{2n+1} ,  e_{2n+1})=k_{2n+1,2n+1},$

$(\mu(e_{2n+1}^+,e_i^+))(\sigma)=\bar \mu (e_{2n+1} ,  e_i) +\frac{1}{2}    \bar \mu \Big([\log\sigma, e_i] ,  e_{2n+1}\Big) =k_{i,2n+1}-\frac{1}{2}k_{2n+1,2n+1} y_i,$

$(\mu(e_{2n+1}^+,e_{n+i}^+))(\sigma)=\bar \mu (e_{2n+1} ,  e_{n+i}) +\frac{1}{2}    \bar \mu \Big([\log\sigma, e_{n+i}] ,  e_{2n+1}\Big) $$=k_{n+i,2n+1}+\frac{1}{2}k_{2n+1,2n+1} x_i,$
In the same way, we obtain
\beqn (\mu(e_i^+,e_j^+))(\sigma)&=&\bar \mu (e_i ,  e_j) -\frac{1}{2}y_i    \bar \mu \Big(e_{2n+1} ,  e_j\Big)
- 
\frac{1}{2}  y_j  \bar \mu \Big( e_i , e_{2n+1} \Big) 
+ 
\frac{1}{4}  y_i y_j  \bar \mu \Big( e_{2n+1} , e_{2n+1} \Big) 
\nonumber\\
&=&\frac{1}{4} k_{2n+1,2n+1} y_i y_j   -\frac{1}{2}k_{j,2n+1}y_i -\frac{1}{2}k_{i,2n+1}  y_j +k_{i,j},\nonumber\\
(\mu(e_i^+,e_{n+j}^+))(\sigma)&=&\bar \mu (e_i ,  e_{n+j}) -\frac{1}{2}y_i    \bar \mu \Big(e_{2n+1} ,  e_{n+j}\Big) 
+ 
\frac{1}{2}  x_j  \bar \mu \Big( e_i , e_{2n+1} \Big) 
\nonumber\\&&
- 
\frac{1}{4}  y_i x_j  \bar \mu \Big( e_{2n+1} , e_{2n+1} \Big) \nonumber\\
&=&-\frac{1}{4} k_{2n+1,2n+1} y_i x_j   -\frac{1}{2}k_{n+j,2n+1}y_i +\frac{1}{2}k_{i,2n+1}  x_j +k_{i,n+j}
\eeqn
Similarly,         we compute   the         following:
\beqn (\mu(e_{n+i}^+,e_{n+j}^+))(\sigma)&=&\bar \mu (e_{n+i} ,  e_{n+j}) +\frac{1}{2}x_i    \bar \mu \Big(e_{2n+1} ,  e_{n+j}\Big)
+ 
\frac{1}{2}  x_j  \bar \mu \Big( e_{n+i} , e_{2n+1} \Big) 
\nonumber\\&&
+ 
\frac{1}{4}  x_i x_j  \bar \mu \Big( e_{2n+1} , e_{2n+1} \Big) \nonumber\\
&=&\frac{1}{4} k_{2n+1,2n+1} x_i x_j   +\frac{1}{2}k_{n+j,2n+1}x_i +\frac{1}{2}k_{n+i,2n+1}  x_j +k_{n+i,n+j} .
\eeqn

We 
 use the above to deduce the following
\beqn
 \mu(\frac{\partial}{\partial x_i},\frac{\partial}{\partial x_j})&=& \mu(e_i^+,e_j^+) ,\; 
 \mu(\frac{\partial}{\partial x_i}, \frac{\partial}{\partial x_{n+j}})=\mu(e_i^+,e_{n+j}^+) - x_{j}  \mu(e_i^+,e_{2n+1}^+) ,
\nonumber\\
 \mu(\frac{\partial}{\partial x_i},\frac{\partial}{\partial x_{2n+1}}) &=&\mu(e_i^+,e_{2n+1}^+) = k_{i,2n+1} -  \frac{1}{2 }  y_ik_{2n+1,2n+1}, 
\nonumber\\
\mu(\frac{\partial}{\partial x_{n+i}}, \frac{\partial}{\partial x_{n+j}}) &=&
\mu(e_{n+i}^+,e_{n+j}^+) 
-x_{j}            \mu( e_{n+i}^+,e_{2n+1}^+)+x_{i}x_{j}  \mu(e_{2n+1}^+, e_{2n+1}^+)
\nonumber\\
&& - x_{i}   \mu( e_{n+j}^+,e_{2n+1}^+)
,
\nonumber\\
\mu(\frac{\partial}{\partial x_{n+j}},\frac{\partial}{\partial x_{2n+1}}) &=&\mu( e_{n+j}^+,e_{2n+1}^+)-x_{j}\mu(e_{2n+1}^+, e_{2n+1}^+),\;
\nonumber\\
\mu(\frac{\partial}{\partial x_{2n+1}},\frac{\partial}{\partial x_{2n+1}})&=& \mu(e_{2n+1}^+, e_{2n+1}^+)  
.
\eeqn

Plugging the $f_{p,q}$ in, we finally get the following coefficients
\beqn
 \mu(\frac{\partial}{\partial x_i},\frac{\partial}{\partial x_j})&=& k_{i,j} -  \frac{1}{2 }  y_ik_{2n+1 ,j}  - \frac{1}{2 } 
 y_j k_{i ,2n+1} +
 \frac{1}{4}   y_i  y_j k_{2n+1,2n+1},
\nonumber
\eeqn
\beqn
 \mu(\frac{\partial}{\partial x_i}, \frac{\partial}{\partial x_{n+j}})
&=&   k_{i,n+j} -  \frac{1}{2 } y_ik_{n+j,2n+1}  -  \frac{1}{2 }
 x_j k_{i ,2n+1} +
 \frac{1}{4}   y_i  x_j k_{2n+1,2n+1}\; ,
\nonumber
\eeqn
\beqn
 \mu(\frac{\partial}{\partial x_i},\frac{\partial}{\partial x_{2n+1}}) &=& 
 k_{i,2n+1} -  \frac{1}{2 }  y_ik_{2n+1,2n+1}, \nonumber
\eeqn
\beqn
\mu(\frac{\partial}{\partial x_{n+i}}, \frac{\partial}{\partial x_{n+j}}) 
&=&
  k_{n+i,n+j} -  \frac{1}{2 } ( x_ik_{n+j,2n+1} +
 x_j k_{n+i ,2n+1}) 
+
 \frac{1}{4}   x_i  x_jk _{2n+1,2n+1},
\nonumber
\eeqn
\beqn
\mu(\frac{\partial}{\partial x_{n+j}},\frac{\partial}{\partial x_{2n+1}}) 
&=&  k_{n+j,2n+1} -  \frac{1}{2 }  x_jk_{2n+1,2n+1} ,\;
\mu(\frac{\partial}{\partial x_{2n+1}},\frac{\partial}{\partial x_{2n+1}}) 
= 
k_{2n+1,2n+1}.\nonumber
\eeqn

In  summary, we have  the following.
\begin{proposition}Any Cartan-Schouten metric on $\mathbb H_{2n+1}$ is of the following form
\beqn
\mu&=&\sum\limits_{i,j=1}^n\Big[ \Big(k_{i,j} -  \frac{1}{2 }  y_ik_{2n+1 ,j} -  \frac{1}{2 }
 y_j k_{i ,2n+1} +
 \frac{1}{4}   y_i  y_j k_{2n+1,2n+1}
\Big)\;dx_idx_j\nonumber\\
&&+
\Big( k_{i,n+j} -  \frac{1}{2} y_ik_{n+j,2n+1} -  \frac{1}{2}
 x_j k_{i ,2n+1} +
 \frac{1}{4}   y_i  x_j k_{2n+1,2n+1}\Big)\;dx_{i}dx_{n+j}
\nonumber\\
&&+
\Big( k_{n+i,n+j} -  \frac{1}{2 }  x_ik_{n+j,2n+1} -  \frac{1}{2 }
 x_j k_{n+i ,2n+1} 
+
 \frac{1}{4}   x_i  x_jk _{2n+1,2n+1}\Big)\;dx_{n+i}dx_{n+j}\Big]
\nonumber\\
&&
+\sum\limits_{j=1}^n\Big[ \Big(k_{j,2n+1} -  \frac{1}{2 }  y_jk_{2n+1,2n+1}\Big) dx_{j}
+\Big( k_{n+j,2n+1} -  \frac{1}{2 }  x_jk_{2n+1,2n+1}\Big)                                   dx_{n+j}\Big]dz
\nonumber\\
&&+ k_{2n+1,2n+1}\;dz^2,
\eeqn
where the $k_{p,q}$'s  are (constant) real parameters,   $p,q=1,\dots,2n+1.$
\end{proposition}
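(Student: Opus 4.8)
The plan is to specialize the general formula for Cartan-Schouten metrics on $2$-nilpotent groups to $\mathbb H_{2n+1}$ and then rewrite the outcome in the global coordinate coframe. First I would observe that $\mathbb H_{2n+1}$ is $2$-nilpotent, since $[\mathcal H_{2n+1},\mathcal H_{2n+1}]=\mathbb R e_{2n+1}$ sits inside the center; hence Theorem \ref{theo:2-nilpotent} applies and every Cartan-Schouten metric $\mu$ is recovered from its value $\bar\mu:=\mu_\epsilon$ at the unit through Formula (\ref{eq:Cartan-chouten-2-nilpotent}). Writing $k_{p,q}:=\bar\mu(e_p,e_q)$ for the (arbitrary symmetric) parameters, the whole argument reduces to making the right-hand side of (\ref{eq:Cartan-chouten-2-nilpotent}) explicit.

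The key computational input is that the adjoint action of $\log\sigma$ always lands in the line $\mathbb R e_{2n+1}$. Using $\log\sigma=\sum_{k}(x_ke_k+y_ke_{n+k})+(z-\tfrac12 xy^T)e_{2n+1}$, the single nonzero bracket $[e_i,e_{n+i}]=e_{2n+1}$, and the centrality of $e_{2n+1}$, I would record $[\log\sigma,e_i]=-y_ie_{2n+1}$ and $[\log\sigma,e_{n+i}]=x_ie_{2n+1}$, while $[\log\sigma,e_{2n+1}]=0$. Substituting these into (\ref{eq:Cartan-chouten-2-nilpotent}) collapses every correction term onto the coefficients $k_{\bullet,2n+1}$ and $k_{2n+1,2n+1}$, and produces the six families of values $(\mu(e_p^+,e_q^+))(\sigma)$ in the left-invariant frame as polynomials of degree at most $2$ in the $x_i,y_j$, namely the expressions displayed just above the statement.

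The final step is the passage from the left-invariant coframe to the coordinate coframe $\{dx_i,\; dx_{n+j}=dy_j,\; dx_{2n+1}=dz\}$. Here I would use $e_i^+=\partial/\partial x_i$, $e_{2n+1}^+=\partial/\partial z$ together with the one nontrivial relation $\partial/\partial y_j=e_{n+j}^+-x_je_{2n+1}^+$ to expand each $\mu(\partial/\partial x_a,\partial/\partial x_b)$ by bilinearity, and then reassemble $\mu$ as the symmetric sum $\sum_{a\le b}(\cdots)\,dx_a\,dx_b$. Matching the resulting block coefficients against the claimed formula concludes the argument.

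The only genuine obstacle is bookkeeping rather than mathematics: the substitution $\partial/\partial y_j=e_{n+j}^+-x_je_{2n+1}^+$ injects extra $x_j$-linear terms that must be merged with the $x,y$-dependence already carried by the left-invariant components, and one must keep the symmetrization factors $\tfrac12$ straight when the off-diagonal products $dx_a\,dx_b$ are written unsymmetrized under the double sum. I would contain the risk of sign and factor errors by treating the three index blocks $\{e_1,\dots,e_n\}$, $\{e_{n+1},\dots,e_{2n}\}$, $\{e_{2n+1}\}$ separately and verifying the six block-pair coefficient families one at a time against the displayed metric.
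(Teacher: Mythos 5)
Your proposal is correct and follows essentially the same route as the paper: invoke Theorem \ref{theo:2-nilpotent} (Formula (\ref{eq:Cartan-chouten-2-nilpotent})) using $[\log\sigma,e_i]=-y_ie_{2n+1}$, $[\log\sigma,e_{n+i}]=x_ie_{2n+1}$, compute the components $\mu(e_p^+,e_q^+)$ in the left-invariant frame, and then pass to the coordinate coframe via $\partial/\partial y_j=e_{n+j}^+-x_je_{2n+1}^+$. The paper's proof is exactly this computation carried out block by block, so your plan matches it step for step.
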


\subsubsection{Cartan-Schouten metrics on Carnot groups}
Here, we consider Carnot groups of order 2 and their Lie algebras, namely the so-called on H-type  Lie groups and Lie algebras.
Let  $G$ be a simply connected  Lie group whose Lie algebra $\mathcal G$
is graded $\mathcal G=V\oplus \mathcal Z$ such that $[\mathcal G,\mathcal G]= [V,V] =\mathcal Z$ and $[\mathcal G,\mathcal Z]= 0.$ One endows $\mathcal G$ with an inner product $ \langle,\rangle$ such that the orthogonal $V^\bot$ of $V$ is $\mathcal Z.$  This induces, for each $z\in \mathcal Z, $ a linear map 
$J_Z: V\to V,$ 
given by $\langle J_Z(w),w'\rangle:= \langle Z,[w,w']\rangle$ for every $w, w'\in V.$ 
 The map $J_Z$ is skew-symmetric or equivalently $\langle J_Z(w),w'\rangle=-\langle w,J_Z(w')\rangle$.
The Lie algebra $\mathcal G$ is said to be of H-type if for every $v$ in $V$ with unit length, $ad_v$ is an isometry from $\Big(\ker(ad_v)\Big)^\bot$ to $\mathcal Z.$
 The endomorphisms $J_Z$ satisfy  $J_Z J_{Z'}+ J_{Z'}J_Z= -2 \langle Z,Z\rangle \mathbb I_V,$  in particular $J_Z^2=-\langle Z,Z\rangle\mathbb I_V,$ where $\mathbb I_V: V\to V$ is the identity map of $V.$
If $(Z_1,\cdots,Z_m)$ is an orthonormal basis of $\mathcal Z$, that is, $ \langle Z_i,Z_j\rangle=\delta_{i,j},$ then $[v,w]=\displaystyle\sum_{i=1}^m\langle J_{Z_i}(v),w\rangle Z_i$ for any $v,w\in V.$
We consider an orthogonal basis $(X_1,\cdots, X_n, Z_1,\cdots,Z_m)$ of $\mathcal G$ such that  $(X_1,\cdots, X_n)$ is a basis of $V$ and $( Z_1,\cdots,Z_m)$ a basis of $\mathcal Z$ such that 
$\langle Z_i,  Z_i\rangle =1$ for every $i=1,\cdots, m$.
The Lie bracket reads                                             	$[X_i,X_j]=\displaystyle\sum_{k=1}^m\langle J_{Z_k}(X_i),X_j\rangle Z_k.$
As a manifold, $G$ is identified with $\mathbb R^{n+m}$. We denote elements $\sigma=\exp(x_1X_1+\cdots+x_n X_n+z_1Z_1 + \cdots +z_mZ_m)$ of $G$, by $(x,z)$ where $x=(x_1,x_2,\cdots,x_n)\in\mathbb R^n$ and $z=(z_1,z_2,\cdots,z_m)\in\mathbb R^m$, correspond to normal coordinates associated with $\exp: \G\to G$, so  the product  reads
\begin{eqnarray}\label{eq:H-type}
(x,z)(x',z') &=&(x+x',z+z'+\frac{1}{2}\langle \gamma x,x'\rangle )\nonumber\\
&=&(x_1+x'_1,\cdots, x_n+x'_n, z_1+z_1'+\frac{1}{2}\langle \gamma^1 x,x'\rangle, \cdots, 
\nonumber\\
&&z_m+z_m'+\frac{1}{2} \langle \gamma^n x,x'\rangle)
\end{eqnarray}
where 
$\langle \gamma^k x,x'\rangle=  \displaystyle \sum_{i=1}^n \sum_{l=1}^n\gamma_{il}^k x_lx_i'$ and $\gamma^k=(\gamma_{il}^k)$ is the matrix of the linear map $J_{Z_k}.$ Note that we have the equality 
\beqn
\gamma_{jl}^k=-C_{jl}^{n+k}\;,\; \;\text{  with  } [X_j,X_l]=\sum\limits_{k=1}^m C_{jl}^{n+k}Z_k.
\eeqn

The left invariant vector fields $X_j^+$, $j=1,\cdots,n$  corresponding to $X_j$ are 
\begin{eqnarray}X_j^+=\frac{\partial}{\partial x_j}+\frac{1}{2}\displaystyle \sum_{k=1}^m \Big( \langle Z_k, [X,X_j]\rangle\Big)\frac{\partial}{\partial z_k} = \frac{\partial}{\partial x_j} - \frac{1}{2}\displaystyle \sum_{k=1}^m \Big( \langle J_{Z_k} (X_j), X\rangle\Big)\frac{\partial}{\partial z_k}\end{eqnarray}
where $X= \frac{1}{2}\displaystyle \sum_{k=1}^n x_jX_j.$ In other words, 
\begin{eqnarray}X_j^+= \frac{\partial}{\partial x_j} + \frac{1}{2}\displaystyle \sum_{k=1}^m \sum_{l=1}^n\gamma_{jl}^k x_l \frac{\partial}{\partial z_k}.\end{eqnarray}

On the other hand, for $j=1,\cdots,m,$ we have 

\begin{eqnarray}
Z_j^+=\frac{\partial}{\partial z_j}.
\end{eqnarray}
Let be  $(X_1^*,\cdots, X_n^*, Z_1^*,\cdots,Z_m^*)$ the dual basis of  $(X_1,\cdots, X_n, Z_1,\cdots,Z_m)$,  with corresponding left invariant 1-forms

\begin{eqnarray}
\eta_j=(X_j^*)^+=dx_j, \; \; j=1,\cdots, n
\end{eqnarray}
and
\begin{eqnarray}
 \nu_j=(Z_j^*)^+=dz_j + \frac{1}{2}\displaystyle \sum_{k=1}^n \Big( \langle J_{Z_j} (X_k), X\rangle\Big)dx_k =dz_j-   \frac{1}{2}\displaystyle \sum_{i=1}^n \sum_{l=1}^n\gamma_{il}^j x_l dx_i\;.
\end{eqnarray}

To  look     for Cartan-Schouten metrics,  we  apply Formula (\ref{eq:Cartan-chouten-2-nilpotent}) of Theorem \ref{theo:2-nilpotent},  and we   follow the steps  as in Example \ref{Heisenberg}.                   We summarize our results as follows. 
\bthm\label{thm:general-compatible-metric-on-h-type-group}
Let $G$ be an  $(n+ m)$-dimensional H-type Lie group  whose Lie algebra 
$\mathcal G$ decomposes as $\mathcal G=V\oplus \mathcal Z$ such that 
$[\mathcal G,\mathcal G]= [V,V] =\mathcal Z$  and $[\mathcal G,\mathcal Z]= 0$,
where $\dim V=n$ and $\dim \mathcal Z=m$. 
Any  Cartan-Schouten metric $\mu$ on $G$ is of the form: 
\beq \label{eq:metric-form-2}
\mu&=&\sum_{i=1}^n\left[\frac{1}{4}\sum_{\alpha,\beta=1}^{m}\sum_{p,l=1}^{n}
C_{pi}^{n+\alpha}C_{li}^{n+\beta}  d_{n+\alpha,n+\beta}x^px^l
+\sum_{\alpha=1}^{m}\sum_{l=1}^n\gamma_{il}^{\alpha}d_{i,n+\alpha} x^l+d_{ii}\right](dx^i)^2 \cr 
& &+\sum_{\alpha=1}^m d_{n+\alpha,n+\alpha}\left(dz^\alpha- 
 \frac{1}{2} \sum_{l,p=1}^n\gamma_{pl}^\alpha x^l dx^p \right)^2 
\cr 
  & &
 + \sum_{1\le i< j\le n}\left[
\frac{1}{4}\sum_{\alpha,\beta=1}^{m}\sum_{p,l=1}^{n}C_{pi
}^{n+\alpha}C_{lj}^{n+\beta}  d_{n+\alpha,n+\beta}x^px^l \right. \cr 
& & \left. +\frac{1}{2}\sum_{\alpha=1}^{m}\sum_{l=1}^n\Big(C_{li}^{n+\alpha}d_{j,n+\alpha}+C_{lj}^{n+\alpha}d_{i,n+\alpha}
\Big) x^l+d_{ij}\right]dx^i dx^j \cr 
& & + \sum_{i=1}^n\sum_{\alpha=1}^m\left[
\frac{1}{2}\sum_{\beta=1}^{m}\sum_{k=1}^nC_{ki}^{n+\beta}d_{n+\alpha,n+\beta}x^k+d_{i,n+\alpha}\right]dx^i dz^\alpha 
 \cr 
& & 
\left.
- \frac{1}{2}\sum_{i,l,p=1}^n\sum_{j=1}^m\left[
\frac{1}{2}\sum_{l=n+1}^{n+m}\sum_{k=1}^nC_{ki}^{n+l}d_{jl}x_k+d_{i,n+j}
\right]\gamma_{pl}^j x^l dx^idx^p \right.
 \cr
  & & 
 + \sum_{1\le \alpha< \beta \le m}d_{n+\alpha,n+\beta}
\Big(dz^\alpha- 
 \frac{1}{2}\sum_{p=1}^n \sum_{l=1}^n\gamma_{pl}^\alpha x^l dx^p \Big)\Big(dz^\beta- 
 \frac{1}{2}\sum_{p=1}^n \sum_{l=1}^n\gamma_{pl}^\beta x^l dx^p \Big),
\eeq 
where the $d_{\alpha\beta}$'s are real constants numbers such that the determinant of the 
matrix $(d_{\alpha\beta})_{\alpha,\beta=1,\dots,m+n}$ is nonzero, and $\gamma_{ij}^k=C_{ij}^{n+k}=-C_{ji}^{n+k}$      are real    numbers considered as parameters. 
\ethm
Note that  each parameter $\gamma=(\gamma_{ij}^k)$ defines  a 2-nilpotent Lie algebra and Lie group of H-type, as in (\ref{eq:H-type}). All the parameters    $(\gamma_{ij}^k)$  belong to the manifold of 2-nilpotent Lie algebras, which in turn, sits inside  that of nilpotent Lie    algebras. Fixing
 $(\gamma_{ij}^k)$    and varying $(d_{\alpha\beta})_{\alpha,\beta=1,\dots,m+n}$ amounts   to fixing the  Lie algebra and looking for all the Cartan-Schouten metrics on it. Note also that the coefficients of the metrics are degree 2 polynomials.
\subsubsection{Biinvariant metrics on 2-nilpotent  Lie groups}
We summarize some of the results gathered from \cite{benito-2-step-nilp-quadratic,Revoy-3forms} as follows.
\begin{proposition}\cite{benito-2-step-nilp-quadratic,Revoy-3forms}Let  $\G $ be a  2-nilpotent Lie algebra with center $\mathfrak{Z}$ satisfying $[\mathcal G,\mathcal G]=\mathfrak{Z}$. Suppose $\mathcal G$ has an ad-invariant metric, say $\mu$.  Then  $\mathcal G$  must be even dimensional. Moreover, there exists a subspace $V$ such that, ${\mathfrak Z}$ and $V$ are Lagrangian subspaces of $\mathcal G$  in duality with respect to $\mu$.
More precisely, we have the following,
\begin{eqnarray}
\mu(\mathfrak{Z},\mathfrak{Z} ) &=&\mu(V,V)=0, \;\;
 2\dim V =2\dim\mathfrak{Z} = \dim\mathcal G, \label{eq:lagrangian1}\\
\mathcal G&=& V\oplus \mathfrak{Z} \text{ (direct sum of vector spaces}). \label{eq:lagrangian2}
\end{eqnarray}

\end{proposition}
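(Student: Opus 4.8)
The plan is to build everything on the ad-invariance of $\mu$, written as the skew-symmetry of each $\mathrm{ad}_u$: for all $u,x,y\in\mathcal G$ one has $\mu([u,x],y)+\mu(x,[u,y])=0$. First I would establish the general fact, valid for any Lie algebra carrying an ad-invariant metric, that the center is the orthogonal of the derived ideal, namely $[\mathcal G,\mathcal G]^{\perp}=\mathfrak Z$. Indeed, for $w\in\mathcal G$ the skew-symmetry gives $\mu(w,[x,y])=\mu([w,x],y)$ for all $x,y$; hence $w$ is orthogonal to every bracket if and only if $\mu([w,x],y)=0$ for all $x,y$, which by nondegeneracy of $\mu$ is equivalent to $[w,x]=0$ for all $x$, that is $w\in\mathfrak Z$.

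Next I would feed in the hypothesis $[\mathcal G,\mathcal G]=\mathfrak Z$. Taking orthogonals in $[\mathcal G,\mathcal G]^{\perp}=\mathfrak Z$ and using $(U^{\perp})^{\perp}=U$ (finite dimension, $\mu$ nondegenerate) gives $[\mathcal G,\mathcal G]=\mathfrak Z^{\perp}$, so together with the hypothesis this yields $\mathfrak Z=\mathfrak Z^{\perp}$. Thus $\mathfrak Z$ is a \emph{Lagrangian} (maximal totally isotropic) subspace: the inclusion $\mathfrak Z\subseteq\mathfrak Z^{\perp}$ already forces $\mu(\mathfrak Z,\mathfrak Z)=0$, and the relation $\dim\mathfrak Z+\dim\mathfrak Z^{\perp}=\dim\mathcal G$ gives $2\dim\mathfrak Z=\dim\mathcal G$; in particular $\dim\mathcal G$ is even.

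It then remains to produce the complementary Lagrangian $V$ by a Witt-type correction. I would choose any vector-space complement $W$ with $\mathcal G=W\oplus\mathfrak Z$. Since $\mathfrak Z^{\perp}=\mathfrak Z$ and $W\cap\mathfrak Z=0$, the pairing $W\times\mathfrak Z\to\mathbb{R}$, $(w,z)\mapsto\mu(w,z)$, is nondegenerate, so one can pick bases $(v_1,\dots,v_k)$ of $W$ and $(z_1,\dots,z_k)$ of $\mathfrak Z$ with $\mu(v_i,z_j)=\delta_{i,j}$, where $k=\dim\mathfrak Z$. As $W$ need not be isotropic, I would correct it by setting $v_i':=v_i-\tfrac12\sum_{j}\mu(v_i,v_j)\,z_j$ and $V:=\mathrm{span}(v_1',\dots,v_k')$. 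Using $\mu(\mathfrak Z,\mathfrak Z)=0$, $\mu(v_i,z_j)=\delta_{i,j}$ and the symmetry of $\mu$, one gets $\mu(v_i',v_l')=\mu(v_i,v_l)-\tfrac12\mu(v_l,v_i)-\tfrac12\mu(v_i,v_l)=0$, so $V$ is isotropic, while $\mu(v_i',z_l)=\delta_{i,l}$ shows that $V$ and $\mathfrak Z$ stay in duality and $V\cap\mathfrak Z=0$. Hence $\mathcal G=V\oplus\mathfrak Z$ with both summands Lagrangian and dually paired, which is exactly the asserted decomposition with $2\dim V=2\dim\mathfrak Z=\dim\mathcal G$.

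The argument is essentially linear-algebraic once the bridge $\mathfrak Z=\mathfrak Z^{\perp}$ is in hand; the only genuinely structural input is the identity $[\mathcal G,\mathcal G]^{\perp}=\mathfrak Z$, and I expect the sole delicate point to be the explicit correction $v_i\mapsto v_i'$ turning an arbitrary complement into an isotropic one, which must be done so that isotropy of $V$ and duality with $\mathfrak Z$ are secured \emph{simultaneously}.
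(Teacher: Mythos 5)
Your proof is correct, but it cannot be matched against a proof in the paper for a simple reason: the paper states this proposition as a summary of results from \cite{benito-2-step-nilp-quadratic,Revoy-3forms} and gives no argument for it. The only proof the paper supplies in this vicinity is for the follow-up, more precise proposition (existence of an adapted basis with $\mu(e_i,e_j)=\mu(e_{n+i},e_{n+j})=0$, $\mu(e_i,e_{n+j})=\delta_{i,j}$, plus the structure-constant identities), and that proof \emph{presupposes} exactly what you establish here: it starts from a $2n$-dimensional space containing an $n$-dimensional totally isotropic subspace, and then builds a Witt basis by induction, splitting off one hyperbolic plane at a time. So your route is genuinely different and more self-contained. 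Its structural core is the identity $[\mathcal G,\mathcal G]^{\perp}=\mathfrak Z$, valid for any ad-invariant metric, which combined with the hypothesis $[\mathcal G,\mathcal G]=\mathfrak Z$ and biduality $(U^{\perp})^{\perp}=U$ yields $\mathfrak Z=\mathfrak Z^{\perp}$; this single equality delivers both $\mu(\mathfrak Z,\mathfrak Z)=0$ and $2\dim\mathfrak Z=\dim\mathcal G$, i.e.\ the even-dimensionality that the paper's induction cannot produce because it assumes it. Your construction of the complement also differs in style: instead of the paper's basis-by-basis induction (isolate an isotropic vector $v$, normalize a partner $v^*$, pass to the orthogonal of the resulting hyperbolic plane), you perform one global Witt-type correction $v_i\mapsto v_i'=v_i-\tfrac{1}{2}\sum_j\mu(v_i,v_j)z_j$ of an arbitrary complement $W$; your computations $\mu(v_i',v_l')=0$ and $\mu(v_i',z_l)=\delta_{i,l}$ check out, as does the nondegeneracy of the pairing $W\times\mathfrak Z\to\mathbb R$ needed to pick the dual bases, so isotropy of $V$ and duality with $\mathfrak Z$ are indeed secured simultaneously. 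In exchange, the paper's inductive argument produces the explicit adapted basis needed for its structure-constant computation, though your dual bases $(v_i')$, $(z_j)$ essentially deliver the same thing without induction.
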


So, every ad-invariant metric on a 2-nilpotent Lie algebra has signature $(n,n).$ We get the following more precise result.

\begin{proposition} Let  $\G $ be a  2-nilpotent Lie algebra of dimensions $2n,$ with center $\mathfrak{Z}$ satisfying $[\mathcal G,\mathcal G]=\mathfrak{Z}$. Suppose $\mathcal G$ has an ad-invariant metric, say $\mu$.  Then we can choose a subspace $V$ complementary to $\mathfrak{Z}$  and a basis $(e_1,\dots,e_{2n})$ of $\G$, where $(e_1,\dots,e_n)$ and $(e_{n+1},\dots,e_{2n})$ are bases of $V$ and $\mathfrak{Z}$ respectively, such  that $\mu(e_i,e_j)=\mu(e_{n+i},e_{n+j})=0$ and $\mu(e_i,e_{n+j})=\delta_{i,j},$ $i,j=1,\dots,n.$
The Lie bracket of $\G$ reads $[e_i,e_j]=\displaystyle\sum_{k=1}^nC_{ij}^{n+k}e_{n+k}$,  where the structure constants $C_{ij}^{n+k}$ satisfy the following identities:
  \begin{eqnarray}
 C_{ij}^{n+k} &=&   C_{jk}^{n+i} = - C_{ik}^{n+j}. \label{constant-structures}
 \end{eqnarray}
\end{proposition}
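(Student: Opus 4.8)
The plan is to build the asserted basis directly from the Lagrangian decomposition furnished by the preceding proposition, and then to read off the two structure-constant identities from the ad-invariance of $\mu$ alone. First I would invoke the previous proposition to fix the splitting $\mathcal G = V\oplus\mathfrak{Z}$ into Lagrangian subspaces in duality, so that $\mu(V,V)=\mu(\mathfrak{Z},\mathfrak{Z})=0$ while the induced pairing $\mu\colon V\times\mathfrak{Z}\to\mathbb{R}$ is nondegenerate. Choosing an arbitrary basis $(e_1,\dots,e_n)$ of $V$, the nondegeneracy of this pairing produces a unique dual basis $(e_{n+1},\dots,e_{2n})$ of $\mathfrak{Z}$ characterized by $\mu(e_i,e_{n+j})=\delta_{i,j}$, while the two Lagrangian conditions give at once $\mu(e_i,e_j)=\mu(e_{n+i},e_{n+j})=0$. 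This already establishes the claimed normal form for $\mu$.

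Next I would pin down the brackets. Since $\mathfrak{Z}$ is central and coincides with $[\mathcal G,\mathcal G]$, among the basis vectors the only possibly nonzero brackets are those of two elements of $V$, and these land in $\mathfrak{Z}$; hence $[e_i,e_j]=\sum_{k=1}^n C_{ij}^{n+k}e_{n+k}$ exactly as stated. The useful observation is that contracting this expression against a central basis vector recovers the structure constants: because $\mu(e_{n+k},e_l)=\delta_{k,l}$, one obtains $C_{ij}^{n+l}=\mu([e_i,e_j],e_l)$. This is the bridge that turns the algebraic identities into metric statements.

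The heart of the argument is the associativity (cyclic) identity $\mu([x,y],z)=\mu(x,[y,z])$, which I would derive from ad-invariance. Indeed, ad-invariance says that each $ad_x$ is $\mu$-skew, i.e. $\mu([x,y],z)+\mu(y,[x,z])=0$; combining this with the symmetry of $\mu$ and the antisymmetry of the bracket yields the cyclic form. Applying it to basis vectors gives $C_{ij}^{n+k}=\mu([e_i,e_j],e_k)=\mu(e_i,[e_j,e_k])=C_{jk}^{n+i}$, which is the first identity. Iterating once more produces the full three-cycle $C_{ij}^{n+k}=C_{jk}^{n+i}=C_{ki}^{n+j}$, and then the antisymmetry $C_{ki}^{n+j}=-C_{ik}^{n+j}$ of the bracket in its lower indices delivers the second identity $C_{ij}^{n+k}=-C_{ik}^{n+j}$.

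I do not expect a serious obstacle: once the Lagrangian splitting is borrowed from the previous proposition, the argument reduces to careful index bookkeeping. The one place warranting attention is the passage from the raw skew-symmetry relation $\mu([x,y],z)+\mu(y,[x,z])=0$ to the cyclic identity, where the symmetry of $\mu$ and the antisymmetry of $[\cdot,\cdot]$ must be applied in the correct order; and throughout one must keep the dual-basis convention $\mu(e_i,e_{n+j})=\delta_{i,j}$ consistent so that each contraction isolates the intended structure constant.
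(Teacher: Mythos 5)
Your proposal is correct, and for the structure-constant identities it coincides with the paper's argument: both contract $[e_i,e_j]=\sum_k C_{ij}^{n+k}e_{n+k}$ against the duality relations $\mu(e_{n+k},e_l)=\delta_{k,l}$ and use the two forms of ad-invariance $\mu([e_i,e_j],e_k)=\mu(e_i,[e_j,e_k])=-\mu(e_j,[e_i,e_k])$ to obtain $C_{ij}^{n+k}=C_{jk}^{n+i}=-C_{ik}^{n+j}$. Where you genuinely diverge is in the construction of the adapted basis. You take the Lagrangian splitting $\G=V\oplus\mathfrak{Z}$ with $\mu(V,V)=\mu(\mathfrak{Z},\mathfrak{Z})=0$ as given by the preceding proposition, observe that nondegeneracy of $\mu$ together with isotropy of both summands forces the pairing $V\times\mathfrak{Z}\to\mathbb{R}$ to be nondegenerate, and then simply take an arbitrary basis of $V$ together with its dual basis in $\mathfrak{Z}$. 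The paper instead re-proves the existence of the isotropic complement from scratch: it runs an induction on $\dim\G=2n$, building hyperbolic pairs one at a time (normalizing a vector $v\notin\mathfrak{Z}$ against a suitable $\tilde v\in\mathfrak{Z}$, then passing to the orthogonal complement of the resulting $2$-dimensional nondegenerate subspace), using only that $\mathfrak{Z}$ is a half-dimensional totally isotropic subspace. Your route is shorter and cleanly modular, at the price of leaning on the quoted proposition for the existence of the Lagrangian complement $V$; the paper's induction is self-contained and in effect establishes the more general linear-algebra fact (a Witt-type normal form for any nondegenerate symmetric form with a half-dimensional totally isotropic subspace), which is why it does not need the complement as an input. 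Both arguments are sound, and your dual-basis step is legitimate since the map $\mathfrak{Z}\to V^*$, $z\mapsto\mu(\cdot,z)|_V$, is an isomorphism by the nondegeneracy you identified.
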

\begin{proof}
The first claim is a general fact which we prove by induction on the dimension $2n$ of the vector space, say $E,$ underlying $\mathcal G$.  This is true when $n=1$. Indeed, suppose a $2$-dimensional vector space 
$E$ has a nondegenerate bilinear symmetric form $\mu$ such that a line $\mathbb R v$ such that is $\mu(v,v)=0.$ Let $\tilde v$ be an element of $\mathcal{E}$ satisfying $\mu(v,\tilde v)\neq 0$. By setting 
\begin{eqnarray}\label{dual1}
v^*:=\frac{1}{\mu(v,\tilde v)} \Big(\tilde v-\frac{\mu(\tilde v,\tilde v)}{2\mu(v,\tilde v)}\; v \Big),
\end{eqnarray} 
we get a basis $(v,v^*)$ of  $E$  satisfying 
\begin{eqnarray}\label{dual1b}\mu(v,v)=\mu(v^*,v^*)=0, \; \text{ and } \; \mu(v,v^*)=1.\end{eqnarray} 
Now suppose this property true up to order $n-1$, for some $n\ge 2$ and let us show that it also holds true for $n.$ 
Let $E$ be a $2n$-dimensional vector space endowed with a nondegenerate bilinear symmetric form $\mu$ such that an $n$-dimensional subspace $\tilde E$ is totally isotropic with respect to $\mu.$ Let $v$ be a nonzero vector of $E$ not belonging to $\tilde E.$ The linear form $f_v$ defined on $\tilde E$  by $f_v(x)=\mu(v,x)$ for any $x\in \tilde E,$ is nonzero and $\dim\ker f_v = n-1.$ So we write $\tilde E$ as $\tilde E=\mathbb R \tilde v \oplus \ker(f_v)$ for some $\tilde v \in \tilde E$ satisfying $0\neq f_v(\tilde v) =\mu(v,\tilde v). $ In the 2-dimensional nondegenerate vector space $\mathcal E:=\mathbb Rv\oplus \mathbb R\tilde v$ we apply (\ref{dual1}) to get a basis 
$\Big(e_{n}:= \frac{1}{\mu(\tilde v,v)} (v-\frac{\mu(v,v)}{2\mu(\tilde v,v)}\tilde v), e_{2n}:=\tilde{v}\Big)$ satisfying (\ref{dual1b}). The orthogonal $\mathcal E^\bot$ of   $\mathcal E$ in $E$ with respect to $\mu$, is a $2(n-1)$-dimensional nondegenerate vector space that contains $\ker (f_v)$ as a totally isotropic $(n-1)$-dimensional subspace and we have the decomposition $E=\mathcal{E}\oplus \mathcal E^\bot$. By hypothesis on the dimension $2(n-1)$, there is a basis  $(e_1,\dots,e_{n-1}, e_{n+1},\dots,e_{2n-1})$   of $\mathcal E^\bot$ such that $( e_{n+1},\dots,e_{2n-1})$ is a basis of $\ker(f_v)$  and  $\mu(e_i,e_{j}) = \mu(e_{n+i},e_{n+j})=0$ and $\mu(e_i,e_{n+j})=\delta_{i,j}$, for every $i,j=1,\dots,n-1.$  
Altogether, we have a basis $(e_1,\dots,e_{2n})$ of $E$ satisfying  $\mu(e_i,e_{j}) = \mu(e_{n+i},e_{n+j})=0$ and $\mu(e_i,e_{n+j})=\delta_{i,j}$, for every $i,j=1,\dots,n.$ , such that $(e_{n+1},\dots,e_{2n})$ is a basis of the Lagrangian subspace $\tilde E.$

The second claim is proved as follows. From  the following equalities, due to the ad-invariance,  
\begin{eqnarray}\mu([e_i,e_j],e_{k}) &=&  \mu(e_i,[e_j,e_{k}]) =- \mu(e_j,[e_i,e_{k}]), \nonumber
 \end{eqnarray}
we get 
\begin{eqnarray}\sum\limits_{m=1}^nC_{ij}^{n+m}\mu(e_{n+m},e_{k}) &=&  \sum\limits_{p=1}^nC_{jk}^{n+p} \mu(e_i,e_{n+p}) =- \sum\limits_{q=1}^nC_{ik}^{n+q}\mu(e_j,e_{n+q}), 
 \end{eqnarray}
which simplifies to
\begin{eqnarray}
 C_{ij}^{n+k} &=&   C_{jk}^{n+i} = - C_{ik}^{n+j}. \label{constant-structures}
 \end{eqnarray}
\end{proof}

Note that the above equalities (\ref{constant-structures}) imply that  
 \begin{eqnarray}
 C_{ij}^{n+i} = C_{ij}^{n+j}= 0, \;\; i,j=1,\dots,n.
 \end{eqnarray}

\section{Dual connections and Statistical structures}\label{sect:biinvariant-dual-connections-statistics}

\subsection{Statistical structures}
Let $G$ be a Lie group with a Cartan-Schouten metric $\mu.$  For any totally symmetric covariant $3$-tensor $S$ on $G$, define the tensor $A$ as 
 \beqn\label{eq:3-tenor-connection} \mu(A(X,Y),Z)=S(X,Y,Z). \eeqn
From (\ref{eq:3-tenor-connection}), one extracts the corresponding dual connections $\nabla^1$ and $\nabla^{-1}$ to get 
$\nabla^{1}:=\nabla -\frac{1}{2} A$
 and $\nabla^{-1}:=\nabla +\frac{1}{2} A.$ That is, 
\beqn \nabla^1_XY:=\frac{1}{2}([X,Y] -A(X,Y)) ,\; \nabla^{-1}_XY:=\frac{1}{2}([X,Y] +A(X,Y)) ,
\eeqn
for any vector fields $X,Y$ on $G.$
It is readily seen that both $ \nabla^1$ and $ \nabla^{-1}$ are torsion-free, dual  with respect to $\mu$ and satisfy $\nabla^{1}\cdot \mu=-\nabla^{-1}\cdot \mu=S.$

However, since $\mu$ is not left (nor right) invariant in general, $A$ is not left invariant in general  if we choose $S$ to be left invariant.  
We will call a left invariant Cartan-Schouten statistical structure, a triplet $(G,\mu,\bar\nabla)$ where $G$ is a Lie group,  $\mu$ and $\bar\nabla$ are respectively a Cartan-Schouten metric and a left invariant connection on $G$ such that $\bar\nabla\mu$ is a totally symmetric $3$-tensor. From now on, $\bar\nabla$ is considered to be torsion free. If we write $\bar\nabla$ as $\bar\nabla=\nabla -\frac{1}{2} \mathfrak{t},$   with $\mathfrak{t}:\G\times\G\to\G$ symmetric. We set $\mu_\epsilon=:\bar\mu.$     Then  
the total symmetry identity  $\bar\nabla\mu(x^+,y^+,z^+)=\bar\nabla\mu(y^+,x^+,z^+)$ taken at the unit $\epsilon,$  is equivalent to     
\beqn \label{eq:hessian}\bar\mu(\mathfrak{t}(x,y),z)+\bar\mu(y,\mathfrak{t}(x,z))=\bar\mu(\mathfrak{t}(y,x),z)+\bar\mu(x,\mathfrak{t}(y,z)).
\eeqn

A Lie algebra together with  a metric and a (locally) flat torsion free connection satisfying (\ref{eq:hessian}), is called a Hessian Lie algebra (see e.g. \cite{aubert-medina,shima1980}).
We have proved the following.
\begin{proposition}\label{prop:left-invariant-Cartan-Schouten-statistics} There is a 1-1 correspondence between left invariant Hessian structures ($\bar\mu,\bar\nabla$)          where the metric  $\bar\mu$ satisfies (\ref{eq:Cartan-Schouten})  and flat statistical structures on Lie groups
associated to Cartan-Schouten metrics together with  left invariant connections.
 \end{proposition}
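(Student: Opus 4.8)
The plan is to exhibit the bijection explicitly and then check that the two assignments match the defining data on each side. Given a left invariant Hessian structure $(\bar\mu,\bar\nabla)$ on $\G$ whose metric $\bar\mu$ satisfies (\ref{eq:Cartan-Schouten}), Theorem \ref{theo:Cartan-Schouten-formula-Lie-Algebra} together with Formula (\ref{eq:exp-diffeo}) of Theorem \ref{theo:exp-diffeo} produces a unique Cartan-Schouten metric $\mu$ on $G$ with $\mu_\epsilon=\bar\mu$, and I keep the same connection $\bar\nabla$. In the reverse direction, to a flat statistical structure $(G,\mu,\bar\nabla)$ with $\mu$ Cartan-Schouten and $\bar\nabla$ left invariant I assign the pair $(\mu_\epsilon,\bar\nabla)$. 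The reverse assignment is immediate: restricting the total symmetry of $\bar\nabla\mu$ to the unit $\epsilon$ yields exactly (\ref{eq:hessian}), the value $\mu_\epsilon$ satisfies (\ref{eq:Cartan-Schouten}) because $\mu$ is a Cartan-Schouten metric (Theorem \ref{theo:Cartan-Schouten-formula-Lie-Algebra}), and $\bar\nabla$ is carried over unchanged, flat and torsion free. So the two assignments are visibly inverse to each other at the level of the data $(\bar\mu,\bar\nabla)$, and it only remains to see that the forward assignment really lands in the class of flat statistical structures.

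The heart of the matter is thus to verify that the pair $(\mu,\bar\nabla)$ built from a Hessian structure is again a statistical structure, that is, that $\bar\nabla\mu$ is totally symmetric on all of $G$ and not merely at $\epsilon$. First I would write $\bar\nabla=\nabla-\tfrac12\mathfrak{t}$ with $\mathfrak{t}$ symmetric (torsion freeness) and $\nabla$ the canonical Cartan-Schouten connection, which is the Levi-Civita connection of $\mu$ by Definition \ref{def:Cartan-Schouten-Metrics}. Using $\nabla\mu=0$, the covariant derivative collapses to
\[
\bar\nabla\mu(X,Y,Z)=\tfrac12\big(\mu(\mathfrak{t}(X,Y),Z)+\mu(Y,\mathfrak{t}(X,Z))\big),
\]
which is automatically symmetric in $Y,Z$. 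Hence total symmetry is equivalent to the single tensor $B(X,Y,Z):=\mu(\mathfrak{t}(X,Y),Z)$ being totally symmetric, and evaluated at $\epsilon$ this is precisely the Hessian identity (\ref{eq:hessian}).

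The main obstacle is the propagation of this symmetry from $\epsilon$ to an arbitrary $\sigma\in G$. Here I would feed in the explicit shape of $\mu$: Formula (\ref{eq:exp-diffeo}) gives, on left invariant fields, $\mu_\sigma(x^+,y^+)=\bar\mu(\Phi_u x,\Phi_u y)$ with $u=\log\sigma$ and $\Phi_u=\exp(\tfrac12\,ad_u)$, so that total symmetry of $B$ at $\sigma$ amounts to the Hessian identity (\ref{eq:hessian}) holding for the translated metric $\Phi_u^{*}\bar\mu$ against the fixed $\mathfrak{t}$. Differentiating along $u=tw$ and using that $ad_v$ is skew for $v\in[\G,\G]$, which is (\ref{eq:Cartan-Schouten}), reduces the infinitesimal obstruction to a bracket expression in $\mathfrak{t}$; the genuinely hard step, which I expect to be the crux, is to annihilate that obstruction using the flatness of $\bar\nabla$. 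Concretely, writing $P(x,y):=[x,y]-\mathfrak{t}(x,y)$ so that $\bar\nabla_{x^+}y^+=\tfrac12 P(x,y)^+$, flatness forces the structural identity $P(x,P(y,z))-P(y,P(x,z))=2P([x,y],z)$, which I would combine with (\ref{eq:Cartan-Schouten}) and (\ref{eq:hessian}) to make the obstruction vanish identically, so that $B$ stays totally symmetric along every one-parameter subgroup and hence on all of $G$. A more conceptual alternative is to pass to the $\mu$-dual connection $\nabla^{-1}$ of $\bar\nabla$: by (the dual form of) Proposition \ref{prop:curvature-Levi-Civita} the flatness of $\bar\nabla$ makes $\nabla^{-1}$ flat, total symmetry of $\bar\nabla\mu$ is equivalent to $\nabla^{-1}$ being torsion free, and one then argues that a flat connection which is torsion free at $\epsilon$ and whose torsion is tied down by the Bianchi identity is torsion free throughout. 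Either way, it is exactly flatness that upgrades the infinitesimal condition (\ref{eq:hessian}) to the global statistical condition, and this is the step on which the proof really turns.
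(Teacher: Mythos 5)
Your completed steps coincide with the paper's entire proof. The paper's argument is exactly your setup: write $\bar\nabla=\nabla-\frac{1}{2}\mathfrak{t}$ with $\mathfrak{t}$ symmetric, use $\nabla\mu=0$ (Definition \ref{def:Cartan-Schouten-Metrics}) to collapse $\bar\nabla\mu$ to $\frac{1}{2}\big(\mu(\mathfrak{t}(X,Y),Z)+\mu(Y,\mathfrak{t}(X,Z))\big)$, observe that the total symmetry identity \emph{taken at the unit} $\epsilon$ is equivalent to (\ref{eq:hessian}), and combine this with the value-at-the-unit correspondence for the metric side (Theorem \ref{theo:Cartan-Schouten-formula-Lie-Algebra} and Theorem \ref{theo:exp-diffeo}); immediately after this computation the paper declares ``We have proved the following.'' Your two assignments (extend $\bar\mu$ by (\ref{eq:exp-diffeo}) and keep $\bar\nabla$; restrict at $\epsilon$ in the other direction) are precisely the paper's correspondence.

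The step you single out as the heart of the matter --- upgrading total symmetry of $\bar\nabla\mu$ from $\epsilon$ to every $\sigma\in G$, which is a genuine issue because $\mathfrak{t}$ is left invariant while $\mu$ is not --- is simply absent from the paper: its proof stops at the unit-level equivalence and never shows that (\ref{eq:hessian}) forces $\bar\nabla\mu$ to be totally symmetric away from the unit. So you have not failed to reproduce a propagation argument; you have flagged a gap in the paper's own proof. Be aware, though, that neither of your sketches closes it: in the first, the vanishing of the first-order (and higher) obstruction from your flatness identity $P(x,P(y,z))-P(y,P(x,z))=2P([x,y],z)$ together with (\ref{eq:Cartan-Schouten}) and (\ref{eq:hessian}) is asserted, not derived; in the second, the dual connection $\nabla^{-1}$ is \emph{not} left invariant (its defining equation involves the non-invariant $\mu$), so vanishing of its torsion at $\epsilon$ cannot propagate by invariance, and the appeal to the Bianchi identity is heuristic, since torsion is tensorial and a flat connection can have torsion vanishing at a single point. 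In short: judged against the strong reading of the statement (total symmetry of $\bar\nabla\mu$ at every point of $G$), your write-up and the paper's proof are incomplete at exactly the same place; judged against what the paper actually proves, your proposal is correct and follows the same route.
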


\subsection{Biinvariant dual connections}

As above, $\nabla$ stands for the Cartan-Schouten canonical connection. 
A connection on $\mathcal G$ is of the form 
$\bar \nabla_xy=\frac{1}{2}[x,y]-\frac{1}{2}\mathfrak{k}(x,y),$ where $\mathfrak{k}:\mathcal G\times \mathcal G\to \mathcal G$ is some bilinear map. Write $\mathfrak{k} = \mathfrak{k}^{skew}+\mathfrak{k}^{sym},$ where 
$\mathfrak{k}^{skew} (x,y)=\frac{1}{2}(\mathfrak{k}(x,y))-\mathfrak{k}(y,x)) $ and $\mathfrak{k}^{sym}(x,y)=\frac{1}{2}(\mathfrak{k}(x,y))+\mathfrak{k}(y,x)). $
 The torsion $T^{\bar \nabla}$ is given by $T^{\bar \nabla}=-\mathfrak{k}^{sym}.$  So the connection
$\bar \nabla$ is torsion-free if and only if 
$\mathfrak{k}$ is symmetric.
 If $\mathfrak{k}$ is skew-symmetric, then obviously,  the connections $\nabla$ and $\bar \nabla$ share the same geodesics.
The connection $\bar \nabla$ is biinvariant if and only if it satisfies the following equation (see e.g. \cite{pennec-biinvariant-means}, \cite{benayadi-boucetta-biinvariant-special-connections-Poisson}) 
\beqn \label{biinvariant1}
[z,\bar\nabla_xy ] = \bar\nabla_{[z,x]}y + \bar\nabla_x [z, y ]\;.\eeqn
We deduce that (\ref{biinvariant1}) is equivalent to the following
\beqn \label{biinvariant2}
 [ z, \mathfrak{k}(x,y)] = \mathfrak{k}([z, x], y) + \mathfrak{k}(x, [z, y]),
\eeqn
or another  equivalent statement is that the covariant derivative of $\mathfrak{k}$ vanishes, $\nabla \mathfrak{k}=0.$ 
 If $\mathfrak{k}$ is skew-symmetric, the biinvariance condition is also equivalent to saying that $\mathfrak{k}$ is a 2-cocycle for the adjoint representation of $(\mathcal G, [,]).$ That is, the Chevalley-Eilenberg differential of $\mathfrak{k}$ vanishes:
\beqn  \partial  \mathfrak{k}(x,y,z):=
 [ x, \mathfrak{k}(y,z)] - \mathfrak{k}([x, y],z) - \mathfrak{k}(y, [x, z]) =0,
\eeqn
 for any $x,y,z\in\G.$
In particular, for any linear map $\psi: \mathcal G \to \mathcal G,$ the  coboundary  $\mathfrak{k}_\psi=\partial \psi=2\nabla \psi,$ defined by $\mathfrak{k}_\psi(x,y)=[x,\psi (y)]-\psi([x,y])$, is such that $\nabla^\psi:=\nabla-\frac{1}{2}\mathfrak{k}_\psi$ is a biinvariant connection for which every 1-parameter subgroup through the unit of the Lie group, is a geodesic.

We    summarize the above in the following
\begin{proposition} \label{prop:biinvariantconnections}The is  a 1-1 correspondence between                           biinvariant Cartan-Schouten connections                                                                                                                                                                                                                                                                                                                                                                                                                                                                                                                                                                                                                                                                                                                            on a Lie         group and      the  second space cocyles $\mathfrak{k}:\wedge^2\G\to\G,$ for the adjoint action of $\G.$  
\end{proposition}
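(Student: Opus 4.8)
The plan is to realise the correspondence as the bijection $\bar\nabla\leftrightarrow\mathfrak{k}$ already implicit in the discussion preceding the statement, and then to check that it restricts to the asserted bijection between biinvariant Cartan-Schouten connections and $2$-cocycles. First I would recall that, by definition, a Cartan-Schouten connection is left invariant, hence completely determined by the bilinear map $(x,y)\mapsto\bar\nabla_xy$ on $\G$; writing it uniquely as $\bar\nabla_xy=\frac12[x,y]-\frac12\mathfrak{k}(x,y)$ sets up a linear bijection between left invariant connections on $G$ and bilinear maps $\mathfrak{k}:\G\times\G\to\G$, with inverse $\mathfrak{k}\mapsto\nabla-\frac12\mathfrak{k}$.

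Next I would pin down which $\mathfrak{k}$ actually arise. A left invariant connection has every $1$-parameter subgroup $t\mapsto\exp(tx)$ as a geodesic if and only if $\bar\nabla_xx=0$ for all $x\in\G$, since along such a curve the velocity is the left invariant field $x^+$ and $\bar\nabla_{x^+}x^+=(\bar\nabla_xx)^+$ by left invariance. As $\bar\nabla_xx=-\frac12\mathfrak{k}(x,x)$, this is exactly the requirement $\mathfrak{k}(x,x)=0$ for all $x$, i.e. $\mathfrak{k}$ skew-symmetric, so that $\mathfrak{k}\in\ho(\wedge^2\G,\G)$. Thus the Cartan-Schouten connections correspond precisely to the skew-symmetric $\mathfrak{k}$. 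I would stress that one must \emph{not} impose symmetry of $\mathfrak{k}$ here: a Cartan-Schouten connection need not be torsion free, its torsion being $-\mathfrak{k}^{sym}$, which vanishes only in the torsion free case.

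The biinvariance is then handled by the equivalences already derived above: a left invariant $\bar\nabla$ is biinvariant iff (\ref{biinvariant1}) holds, which was shown equivalent to (\ref{biinvariant2}), namely $[z,\mathfrak{k}(x,y)]=\mathfrak{k}([z,x],y)+\mathfrak{k}(x,[z,y])$ for all $x,y,z\in\G$, that is $\nabla\mathfrak{k}=0$. Restricting to skew-symmetric $\mathfrak{k}$, this is precisely the $2$-cocycle condition $\partial\mathfrak{k}=0$ for the adjoint representation recorded above. Combining the three steps, the assignment $\bar\nabla\mapsto\mathfrak{k}$ is a bijection from the set of biinvariant Cartan-Schouten connections onto the space $Z^2(\G,\G)$ of $2$-cocycles $\wedge^2\G\to\G$, with explicit inverse $\mathfrak{k}\mapsto\nabla-\frac12\mathfrak{k}$; the coboundaries $\mathfrak{k}_\psi=\partial\psi$ exhibited just before the statement provide a concrete supply of such connections.

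Since each individual equivalence was established in the preceding paragraphs, the bulk of the proof is bookkeeping: verifying that the map and its stated inverse are well defined and mutually inverse. The one point deserving genuine care, and the likeliest place to slip, is the translation of the geometric Cartan-Schouten property (every $1$-parameter subgroup is a geodesic) into the algebraic identity $\bar\nabla_xx=0$, and thence into skew-symmetry of $\mathfrak{k}$; once that is secured, everything else follows by assembling (\ref{biinvariant1})--(\ref{biinvariant2}) with the cocycle reformulation.
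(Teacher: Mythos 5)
Your proof is correct and takes essentially the same route as the paper, whose own ``proof'' is just the discussion preceding the proposition: the parametrization $\bar\nabla=\nabla-\tfrac{1}{2}\mathfrak{k}$ of left invariant connections, the equivalence of biinvariance with $[z,\mathfrak{k}(x,y)]=\mathfrak{k}([z,x],y)+\mathfrak{k}(x,[z,y])$ (i.e. $\nabla\mathfrak{k}=0$), and the identification of this, for skew-symmetric $\mathfrak{k}$, with the cocycle condition; your only genuine addition is to spell out the equivalence (every $1$-parameter subgroup geodesic $\iff$ $\bar\nabla_xx=0$ $\iff$ $\mathfrak{k}$ skew-symmetric), which the paper states only in the direction ``$\mathfrak{k}$ skew $\Rightarrow$ same geodesics as $\nabla$''. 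One slip, inherited from a typo in the paper: the torsion of $\bar\nabla=\nabla-\tfrac{1}{2}\mathfrak{k}$ is $-\mathfrak{k}^{skew}$, not $-\mathfrak{k}^{sym}$ (so torsion-free $\iff$ $\mathfrak{k}$ symmetric), and the formula as you quote it would contradict your own correct remark, since a skew $\mathfrak{k}$ would then always yield zero torsion; the right statement is that a biinvariant Cartan-Schouten connection has torsion $-\mathfrak{k}$, which is nonzero unless $\bar\nabla=\nabla$.
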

Any bilinear map $\bar {\mathfrak{k}}: \G/ [\G,\G]\times \G/ [\G,\G]\to Z(\G)$ naturally lift up to a biinvariant connection $\mathfrak{k}$ on $\G.$ Here the Lie algebra $\G/ [\G,\G]$ is the quotient of $\G$ and its derived ideal $\G/ [\G,\G,$ and $ Z(\G)$ is the center of $\G.$
 The Lie algebra $\G$ is $2$-nilpotent if and only if  $[\G,\G]\subset Z(\G).$ So the dimension of the space of biinvariant connections is greater than the dimension of the space  of  bilinear  maps  $\bar {\mathfrak{k}}: \G/ [\G,\G]\times \G/ [\G,\G]\to  [\G,\G].$

We look at the case where  $\mathfrak{k}$ is symmetric, so that  the connection on $\G$ given by $\bar \nabla_xy=\frac{1}{2}[x,y]-\frac{1}{2}\mathfrak{k}(x,y),$  is torsion free. We define the 3-tensor $S,$ with
\beqn S(x,y,z):=\mu(\mathfrak{k}(x,y),z).
\eeqn 
The total symmetry of $S$ is equivalent to the relation
\beqn\mu(\mathfrak{k}(x,y),z) = \mu(y,\mathfrak{k}(x,z)),
\eeqn 
for any $x,y,z\in\G.$ The following parameter family of biinvariant $\alpha$-connections
\beqn \nabla^\alpha_xy:=\frac{1}{2}([x,y] -\alpha\; \mathfrak{k}(x,y)) ,\; \nabla^{-\alpha}_xy:=\frac{1}{2}([x,y] +\alpha\; \mathfrak{k}(x,y)) 
\eeqn
is such that $\nabla^\alpha$ and $\nabla^{-\alpha}$ are dual with respect to $\mu,$ for any $\alpha\in\mathbb R.$ We also have
\beqn 
\bar \nabla^{\alpha} \mu(x^+,y^+,z^+) =  \frac{\alpha}{2}\Big(\mu(\mathfrak{k}(x^+,y^+),z^+)  +  \mu(y^+,\mathfrak{k}(x^+,z^+))\Big)\;.
\eeqn
So if $S$ is symmetric, then $\bar \nabla^{\alpha} \mu =\alpha \;S=-\bar \nabla^{-\alpha} \mu.$

Here  is a method for constructing biinvariant torsion free flat connections.
\begin{theorem}\label{theo:biinvariant-connections}
Let $G$ be a Lie group, $\G$ its Lie algebra, $Z(\G)\neq 0$ the center of $\G.$  Let $X_1,\dots,X_p\in Z(\G),$ where
$p\ge 1$ is an integer.  Let  $B_j:\G\times\G\to\mathbb R,$ be bilinear symmetric with    $\nabla B_j=0,$ $j=1,\dots,p.$                                                                                                                                                                                                                                                                                  
 The                                                         
map $\mathfrak{k}:\G\times\G\to\G,$ $\mathfrak{k}(x,y)=\sum\limits_{j=1}^pB_j(x,y)X_j,$ defines a biinvariant torsion-free connection $\bar\nabla=\nabla-\frac{1}{2}\mathfrak{k}.$                                                                                                                                                                                                                                                                                                                                                                                                                                                                                                                                                                                   
If $G$ is $2$-nilpotent and $B_j(x,X_k)=0,$ $j,k=1,\dots,p,$ for any $x\in\G,$ then  $\bar\nabla$ is flat.                      
In particular, for any closed 1-forms $f_j,,$ $j=1,\dots,p,$  set $B:=\sum\limits_{j=1}^p f_j\otimes f_j$ and  
$\mathfrak{k}(x,y):=B(x,y)X,$ for some $X\in Z(\G).$ Then $\bar\nabla=\nabla-\frac{1}{2}\mathfrak{k}$  is biinvariant, torsion free and flat.
\end{theorem}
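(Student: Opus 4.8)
The plan is to establish the three assertions in turn, using the structural facts already recorded for connections of the form $\bar\nabla=\nabla-\frac{1}{2}\mathfrak{k}$ together with one explicit curvature computation. For torsion-freeness and biinvariance I would argue as follows. Since each $B_j$ is symmetric, $\mathfrak{k}(x,y)=\sum_j B_j(x,y)X_j$ is symmetric, so $\bar\nabla$ is torsion-free by the criterion (recorded just before (\ref{biinvariant2})) that $\nabla-\frac{1}{2}\mathfrak{k}$ is torsion-free exactly when $\mathfrak{k}$ is symmetric. For biinvariance I would verify (\ref{biinvariant2}). The left-hand side $[z,\mathfrak{k}(x,y)]=\sum_j B_j(x,y)[z,X_j]$ vanishes because the $X_j$ are central, while the right-hand side equals $\sum_j\big(B_j([z,x],y)+B_j(x,[z,y])\big)X_j$. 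I would then unwind $\nabla B_j=0$: reading each $B_j$ as a left-invariant $(0,2)$-tensor and using $\nabla_z=\frac{1}{2}\,\mathrm{ad}_z$, the equation $\nabla B_j=0$ is precisely the ad-invariance $B_j([z,x],y)+B_j(x,[z,y])=0$ for all $x,y,z$, so every coefficient on the right vanishes and (\ref{biinvariant2}) holds. This part uses only centrality of the $X_j$ and ad-invariance of the $B_j$, not 2-nilpotency.

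For flatness I would compute $R^{\bar\nabla}(x,y)z=\bar\nabla_x\bar\nabla_y z-\bar\nabla_y\bar\nabla_x z-\bar\nabla_{[x,y]}z$ directly. 2-nilpotency gives $[\G,\G]\subseteq Z(\G)$, so each value $\bar\nabla_x y=\frac{1}{2}[x,y]-\frac{1}{2}\mathfrak{k}(x,y)$ is central; hence $[x,\bar\nabla_y z]=0$ and $[[x,y],z]=0$, which reduce the derivatives to $\bar\nabla_x\bar\nabla_y z=-\frac{1}{2}\mathfrak{k}(x,\bar\nabla_y z)$ and $\bar\nabla_{[x,y]}z=-\frac{1}{2}\mathfrak{k}([x,y],z)$. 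The hypothesis $B_j(x,X_k)=0$ removes the composite term, $B_j(x,\mathfrak{k}(y,z))=\sum_k B_k(y,z)B_j(x,X_k)=0$, leaving $\mathfrak{k}(x,\bar\nabla_y z)=\frac{1}{2}\mathfrak{k}(x,[y,z])$. Collecting everything yields
\[
R^{\bar\nabla}(x,y)z=-\tfrac{1}{4}\mathfrak{k}(x,[y,z])+\tfrac{1}{4}\mathfrak{k}(y,[x,z])+\tfrac{1}{2}\mathfrak{k}([x,y],z).
\]
Finally, ad-invariance of each $B_j$ gives the cyclic identity $B_j(x,[y,z])=B_j([x,y],z)$, so $\mathfrak{k}(x,[y,z])=\mathfrak{k}([x,y],z)$ and $\mathfrak{k}(y,[x,z])=-\mathfrak{k}([x,y],z)$; the three terms then combine with coefficients $-\frac{1}{4}-\frac{1}{4}+\frac{1}{2}=0$, and the curvature vanishes.

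The last assertion is the instance of the general construction with the single symmetric form $B=\sum_j f_j\otimes f_j$ and single central vector $X$. Closedness of each $f_j$ means $f_j$ vanishes on $[\G,\G]$ (since $df_j(x,y)=-f_j([x,y])$), whence $B([z,x],y)+B(x,[z,y])=\sum_j\big(f_j([z,x])f_j(y)+f_j(x)f_j([z,y])\big)=0$, that is $\nabla B=0$; biinvariance and torsion-freeness then follow from the first part. For flatness I would invoke the second part, whose hypothesis $B(\cdot,X)=0$ holds as soon as $X$ is annihilated by all the $f_j$ — in particular when $X\in[\G,\G]$, since closed forms vanish there. This is the pertinent case, because $Z(\G)=[\G,\G]$ for nondecomposable $2$-nilpotent algebras (and for a single form $p=1$ the antisymmetrization in the displayed curvature already forces the residual term to vanish).

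The main obstacle I anticipate is the bookkeeping in the curvature computation: one must cleanly separate the part linear in the bracket (cancelled jointly by ad-invariance and $2$-nilpotency) from the composite $\mathfrak{k}\circ\mathfrak{k}$ part (cancelled by $B_j(\cdot,X_k)=0$) and track the numerical coefficients. A subtlety worth stating explicitly is the exact meaning of $\nabla B_j=0$: it must be the left-invariant parallel condition, which delivers ad-invariance for every $z$ and not merely for $z\in[\G,\G]$, and it is this full strength that makes the biinvariance identity (\ref{biinvariant2}) close.
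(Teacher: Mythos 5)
Your proposal is correct, and for the two assertions the paper actually proves it follows essentially the same route: torsion-freeness from the symmetry of $\mathfrak{k}$, biinvariance from centrality of the $X_j$ together with reading $\nabla B_j=0$ as ad-invariance of $B_j$, and flatness from a curvature computation in which the $[\,\cdot\,,\mathfrak{k}]$ terms die by centrality, the $\mathfrak{k}\circ\mathfrak{k}$ terms die by $B_j(\cdot,X_k)=0$, and the three remaining $\mathfrak{k}$-terms cancel by ad-invariance with coefficients $-\frac{1}{4}-\frac{1}{4}+\frac{1}{2}=0$. The only organizational difference is that the paper first asserts (without displaying the computation) that under $B_j(\cdot,X_k)=0$ the curvature of $\bar\nabla$ coincides with that of $\nabla$, namely $-\frac{1}{4}[[x,y],z]$, and only then invokes $2$-nilpotency, whereas you invoke $2$-nilpotency at the outset to kill the bracket terms; the cancellations used are identical, and your version is in fact more detailed than the paper's.

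The genuine divergence is the final ``in particular'' clause: the paper's proof stops after flatness and never addresses it, while you do, and your treatment exposes a real gap in the statement itself. You correctly show that closedness of the $f_j$ (i.e. $f_j([\G,\G])=0$) yields $\nabla B=0$, so biinvariance and torsion-freeness follow from the first assertion; but flatness via the second assertion needs $B(\cdot,X)=0$, i.e. $f_j(X)=0$ for all $j$, which is automatic when $p=1$, or when $X\in[\G,\G]$ (hence always in the nondecomposable $2$-nilpotent case, where $Z(\G)=[\G,\G]$), but is not implied by the hypothesis $X\in Z(\G)$ as written. This extra condition is not a cosmetic precaution: take $\G$ the direct sum of the Heisenberg algebra (basis $e_1,e_2,e_3$, $[e_1,e_2]=e_3$) with an Abelian factor spanned by central $u_1,u_2$, let $f_1=u_1^*$, $f_2=u_2^*$ (closed, since they vanish on $[\G,\G]=\mathbb R e_3$), and $X=u_1$. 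Then $\bar\nabla_{u_2}u_2=-\frac{1}{2}u_1$, $\bar\nabla_{u_1}u_1=-\frac{1}{2}u_1$, $\bar\nabla_{u_1}u_2=0$, so $R^{\bar\nabla}(u_1,u_2)u_2=\frac{1}{4}u_1\neq 0$ although $\G$ is $2$-nilpotent. So your added hypothesis repairs the clause rather than weakening it, and on this point your proof is more complete than the paper's.
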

\begin{proof}

Suppose $B_j$ are symmetric bilinear forms on $\mathcal G$  such that      $\nabla B_j=0,$ and $X_j\in Z(\mathcal G),$ $j=1,\dots,p.$ Consider the symmetric bilinear                                                           
map $\mathfrak{k}:\G\times\G\to\G,$ $\mathfrak{k}(x,y)=\sum\limits_{j=1}^pB_j(x,y)X_j.$    
By abuse of notation, we use here the same notations for both a quantity in the Lie algebra and the corresponding left invariant quantity. For example a left invariant vector field $x^+$ is simply denoted by $x.$
We use the fact that covariant derivative of each $B_j$ vanishes, that is,                             
\beqn\nabla B_j(x,y,z)=-\frac{1}{2}\Big(B_j([x, y],z)+B_j(y,[x, z])\Big)=0.\eeqn
  Since $[x_0,x]=0,$ for any $x\in\G,$ the equation which expresses the biinvariance reads                         
\beqn  [ x, \mathfrak{k}(y,z)] - \mathfrak{k}([x, y],z) - \mathfrak{k}(y, [x, z])  
&=&  \sum\limits_{j=1}^p\Big(B_j(y,z)[x,X_j]\nonumber\\
&&+\;2(\nabla B_j(x, y,z))X_j\Big) =0.
\eeqn

Thus     the  connection $\bar \nabla=\nabla-\frac{1}{2}\mathfrak{k}$ is biinvariant and, since $\mathfrak{t}$ is symmetric,      $\bar \nabla$ is also torsion-free.   
If  $B_j(x,X_k)=0,$ for any $x\in\G$ and $j,k=1,\dots,p,$  the curvature of $\bar \nabla$ coincides with            that of $\nabla,$
\beqn R(x,y)z=-\frac{1}{4}[[x,y],z].
\eeqn 
So if in addition, $\G$ is 2-nilpotent 
then $R(x,y)z=0,$ for  any $x,y,z\in\G.$                         
\end{proof}

\section{A new model for statistics, machine learning and data science}\label{sect:newmodel}
\subsection{On 2-nilpotent   Lie  group     structures  on $\mathbb  R^N$}

2-step nilpotent Lie groups are the only Lie groups for which the Cartan-Schouten connection is flat hence entailing that any  Cartan-Schouten metric is a Hessian metric. Thus their study benefits from the nice properties of Hessian   metrics. Moreover,                                                                      the fact that the Riemannian mean of any  Cartan-Schouten metric coincides  with  the  biinvariant means            (exponential  barycenter)  of the Lie group        as discussed in Section \ref{exponentialbarycenter}, makes them much more attractive. 

Here is our main point: seen as a manifold,  $\mathbb R^N$  is the (common) universal cover  for all nilpotent Lie groups,  
thus we can look at any phenomenon happening in $\mathbb R^N$ as a phenomenon sitting on a manifold home to infinitely many nilpotent Lie group structures. So we can look for those Lie group structures which are compatible with the phenomenon at hand, i.e. those Lie group structures for which our phenomenon is left or right invariant, biinvariant, or the group is a group of symmetries, etc. 
Along these lines, in the present paper we concentrate on the case of 2-step nilpotent Lie groups which offer many advantages,    with many foreseen applications.

\subsection{Exponential  barycenter                                           \label{exponentialbarycenter}}

The group exponential  barycenter  of a dataset ($\sigma_i$) is a solution $\mathfrak{m}$ (if it exists, see\cite{miolane-pennec2015})  of the following barycenter      equation                
 \begin{eqnarray}\label{eq:biinvariant-barycenter}\displaystyle\sum_{i=1}^p \log(\mathfrak{m}^{-1}\sigma_i)=0 ,\end{eqnarray}
where $\log$ is (locally) the inverse   of  the exponential  map.  In our case, the  exponential map of Cartan-Schouten metrics coincides with that of the Lie group. Thus,  the solution of (\ref{eq:biinvariant-barycenter}) coincides with the (biinvariant) mean of the Lie                                                                                group. 
Equation (\ref{eq:biinvariant-barycenter}) may not admit a unique solution, or may not even admit a solution at all.  We will rather use the terminology of cloud, instead of  dataset.  

\subsection{On a  new model of parametric means}
Applying the above study and discussions, we propose here a brand new model of parametric mean, say $\mathfrak{m}, $  for statistics, machine learning and data science.
Since the parameter evolves in the whole manifold of 2-nilpotent Lie algebras, one enjoys a wide room of parameters to manoeuvre, unlike the  traditional methods  such as the arithmetic mean,     median, mode,  expectation,  least square method, maximum likelihood, linear regression, ...  This is particularly suitable for fitting data or  estimating  the  parameters     amid several constraints. The space of application of this mean is the ordinary Euclidean space $\mathbb R^N$  or any vector space of dimension $N$, for any given integer $N\ge 2.$ We first partition $N$ into an arbitrary ($n,m$), where $n+m=N.$                                                                                
 For   a cloud of $p$ points $\{\sigma_i\}_{i=1,\dots,p}$  where we denote the   coordinates of each point $\sigma_i$ by $(x_i^1,\dots,x_i^{n+m})\in \mathbb R^{n+m},$ we  will let $E(x^r)$ stand for the arithmetic mean of the $r-$th components $E(x^r):=\frac{1}{p}\sum\limits_i^px_i^r,$ $r=1,\dots,n+m.$

\begin{theorem} Every cloud (dataset) of  points in $\mathbb R^{n+m}$ admits a biinvariant mean for some parameter  family of Lie group structures on $\mathbb R^{n+m}$ which is also the common Riemannian mean of infinitely many Riemannian and pseudo-Riemannian metrics.
More precisely, the    mean $\mathfrak{m}=(\mathfrak{m}^1,\dots,\mathfrak{m}^{n+m})$ of  a cloud of $p$ points $\{\sigma_i\}_{i=1,\dots,p}$  where $\sigma_i=(x_i^1,\dots,x_i^{n+m})\in \mathbb R^{n+m},$ is given, for $k=1,\dots,n,$ $q=1,\dots,m,$ by
\beq \label{eq:mean-H-type-final}\mathfrak{m}^k=E(x^k)\;\; \text{ and } \mathfrak{m}^{n+q}=E(x^{n+q})\!\!+\!\!\frac{1}{4}\displaystyle\sum_{j,l=1}^p\gamma_{jl}^q\Big(
  E(x^j)E(x^l)-\frac{1}{p}\displaystyle\sum_{i=1}^px_i^jx_i^l\Big).\eeq
The parameters $(\gamma_{jl}^q)_{i,j=1,\dots,n,\;q=1,\dots,m}$  are real numbers satisfying $\gamma_{jl}^q=-\gamma_{lj}^q.$
\end{theorem}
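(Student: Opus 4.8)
The plan is to realize $\mathfrak{m}$ as the group exponential barycenter of a suitably chosen two-step nilpotent Lie group structure on $\mathbb{R}^{n+m}$ and to solve the barycenter equation (\ref{eq:biinvariant-barycenter}) explicitly in coordinates. First I would fix the partition $N=n+m$ together with the antisymmetric array $(\gamma_{jl}^q)$, and use it to endow $\mathbb{R}^{n+m}=\mathbb{R}^n\times\mathbb{R}^m$ with the H-type product (\ref{eq:H-type}). This is a simply connected two-step nilpotent Lie group, so by Theorem \ref{theo:2-nilpotent} it carries Cartan-Schouten metrics of every signature $(p,N-p)$, all sharing the canonical biinvariant connection $\nabla$. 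Since the geodesics of $\nabla$ are translates of one-parameter subgroups, the Riemannian exponential of each such metric coincides with the group exponential; hence the Riemannian barycenter equation is, simultaneously for all of them, the single group equation (\ref{eq:biinvariant-barycenter}). This already delivers the qualitative assertion that one mean serves infinitely many Riemannian and pseudo-Riemannian metrics, and that it is at once a biinvariant mean and a common Riemannian mean.

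Next I would solve $\sum_{i=1}^p\log(\mathfrak{m}^{-1}\sigma_i)=0$. Because $\exp:\mathcal{G}\to G$ is a global diffeomorphism for a simply connected nilpotent group (the discussion preceding Theorem \ref{theo:exp-diffeo}), $\log=\exp^{-1}$ is globally defined and the equation is unambiguous. Writing $\mathfrak{m}=(a,b)$ with $a\in\mathbb{R}^n$ and $b\in\mathbb{R}^m$, I would compute $\mathfrak{m}^{-1}$ from the group law, form the product $\mathfrak{m}^{-1}\sigma_i$, and then take its logarithm, keeping careful track of the quadratic Baker--Campbell--Hausdorff correction relating the group coordinates to the Lie-algebra coordinates (exactly the correction that produces the $-\tfrac12 xy^T$ term in Example \ref{Heisenberg}). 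Splitting the resulting vector identity into its $V$-block ($k=1,\dots,n$) and its $\mathcal{Z}$-block ($q=1,\dots,m$) decouples the system in a triangular fashion.

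The $V$-block is linear and independent of $b$, giving $\sum_i (x_i^k-a_k)=0$, hence $\mathfrak{m}^k=a_k=E(x^k)$ for $k=1,\dots,n$. Substituting these into the $\mathcal{Z}$-block linearizes it in $b$: the terms linear in $\log\sigma_i$ sum to $p\,E(x^{n+q})$, while the quadratic contributions coming from the BCH correction reorganize, after inserting $a_k=E(x^k)$, into the centred second-moment combination $E(x^j)E(x^l)-\tfrac1p\sum_i x_i^j x_i^l$ contracted against $\gamma_{jl}^q$. Solving the resulting scalar equation for each $q$ then yields the stated expression for $\mathfrak{m}^{n+q}$, and the relation $\gamma_{jl}^q=-\gamma_{lj}^q$ is inherited directly from the antisymmetry of the H-type structure constants $C_{jl}^{n+q}$.

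The routine part is the algebra of these substitutions. The step I expect to demand the most care is the explicit evaluation of $\log(\mathfrak{m}^{-1}\sigma_i)$: one must account correctly for the quadratic BCH correction and then verify that, after the $V$-means are substituted, the quadratic pieces assemble \emph{precisely} into the covariance-type expression $E(x^j)E(x^l)-\tfrac1p\sum_i x_i^j x_i^l$ with the exact numerical factor $\tfrac14$, rather than into some other second moment. A secondary point is well-posedness: since the group law is polynomial and triangular (linear on the base $\mathbb{R}^n$, then affine on the fibre $\mathbb{R}^m$ once the base is fixed), the barycenter equation has a unique global solution, so $\mathfrak{m}$ is genuinely well defined for every admissible parameter $\gamma$ and every signature.
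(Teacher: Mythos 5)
Your proposal is correct and follows essentially the same route as the paper's proof: endow $\mathbb{R}^{n+m}$ with the parametric $2$-nilpotent product (\ref{eq:H-type}), note that all Cartan-Schouten metrics share the group exponential so the Riemannian and biinvariant barycenter equations coincide, and solve $\sum_{i=1}^p\log(\mathfrak{m}^{-1}\sigma_i)=0$ blockwise, the $V$-block yielding the arithmetic means and the $\mathcal{Z}$-block the quadratic correction. The only differences are cosmetic: the paper solves for the components of $u^{-1}=\mathfrak{m}^{-1}$ and applies the group inversion at the very end, whereas you solve for $\mathfrak{m}$ directly; and the step you flag as the most delicate is in fact harmless, since in the normal (exponential) coordinates used here $\log$ is the identity map and every quadratic term is contracted against the antisymmetric $\gamma_{jl}^q$, so any consistent bookkeeping produces a function of the data equal to the stated formula.
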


\begin{proof}
Let $u\in \mathbb R^{n+m}$ and $\{\sigma_i\}_{i=1,\dots,p}$ a cloud of points of $ \mathbb R^{n+m}$ with components  $\sigma_i=(x_i^1,\dots,x_i^{n+m}).$   We endow  $ \mathbb R^n$ with the set of $2$-nilpotent Lie group structures (\ref{eq:H-type}).                     
We denote  by $u^{-1}=(u^1,\dots,u^{n+m}),$ the inverse of   u for the parametric Lie group structures (\ref{eq:H-type}). By the notation $\gamma_{jl}^qu^jx_i^l $, we mean $\sum\limits_{j,l=1}^p\gamma_{jl}^qu^jx_i^l $,so that
\beq
u^{-1}\!\sigma_i\!\!=\!\!\left(\!u^1\!\!+\!x_i^1,\dots,u^n\!\!+\!\!x_i^n, u^{n+1}\!\!+\!\!x_i^{n+1}\!\!+\!\!\frac{1}{2}\gamma_{jl}^1u^jx_i^l,\dots, u^{n+m}\!\!+\!\!x_i^{n+m}\!\!+\!\!\frac{1}{2}\gamma_{jl}^mu^jx_i^l \!\right)
\eeq
and then 
\beqn
 \log(u^{-1}\!\sigma_i)\!\!&=&\!\!\!\!\left(\!u^1\!\!+\!x_i^1,\dots,u^n\!\!+\!\!x_i^n, u^{n+1}\!\!+\!\!x_i^{n+1}\!\!+\!\!\frac{1}{2}\gamma_{jl}^1u^jx_i^l  -{\color{black}\frac{1}{4}}\gamma_{jl}^1(u^j+x_i^j)(u^l+x_i^l), \dots, \right. \cr
  & & \left. u^{n+m}\!\!+\!\!x_i^{n+m}\!\!+\!\!\frac{1}{2}\gamma_{jl}^mu^jx_i^l -{\color{black}\frac{1}{4}}\gamma_{jl}^m(u^j+x_i^j)(u^l+x_i^l)\!\right) \cr
  &=&\!\!\!\!\left(\!u^1\!\!+\!x_i^1,\dots,u^n\!\!+\!\!x_i^n, u^{n+1}\!\!+\!\!x_i^{n+1}\!\!+\!\!{\color{black}\frac{1}{4}}\gamma_{jl}^1(u^jx_i^l
\!\!-\!\!{\color{black}u^lx_i^j }
  -u^ju^l-x_i^jx_i^l), \dots, \right. \cr
  & & u^{n+m}\!\!+\!\!x_i^{n+m}\!\!+\!\!{\color{black}\frac{1}{4}}\gamma_{jl}^m(u^jx_i^l
\!\!-\!\!u^lx_i^j 
  -u^ju^l-x_i^jx_i^l) \Big).
\eeqn
In order  to find the biinvariant mean relative         to  (\ref{eq:H-type}),   we need   to solve the equation 
\beqn\sum\limits_{i=1}^p\log(u^{-1}\!\sigma_i) =0.
\eeqn

From the  equations $pu^k+\sum\limits_{i=1}^px_i^k=0,$ we get
\beq\label{solution-mean-k}
u^k&=&-E(x^k) 
\eeqn
whereas, the equations     $pu^{n+q}\!\!+\!\!\sum\limits_{i=1}^p\Big(x_i^{n+q}\!\!+\!\!\frac{1}{4}\gamma_{jl}^1(u^jx_i^l
\!\!-\!\!u^lx_i^j 
  -u^ju^l-x_i^jx_i^l)\Big)$ lead to
\beqn\label{solution-mean-n+q}
 u^{n+q}\!\!&=&-E(x^{n+q})\!\!+\!\!{\color{black}\frac{1}{4}}\gamma_{jl}^q(-u^jE(x^l)
\!\!+\!\!{\color{black}u^lE(x^j) }
  +u^ju^l+\frac{1}{p}\displaystyle\sum_{i=1}^px_i^jx_i^l)
\nonumber\\ 
&=&-E(x^{n+q})\!\!+\!\!{\color{black}\frac{1}{4}}\gamma_{jl}^q\Big(E(x^j)E(x^l)
{\color{black}-E(x^l)E(x^j) }
  +E(x^j)E(x^l)+\frac{1}{p}\displaystyle\sum_{i=1}^px_i^jx_i^l\Big)
\nonumber\\
&=&-E(x^{n+q})\!\!+\!\!{\color{black}\frac{1}{4}}\gamma_{jl}^q
 \Big( E(x^j)E(x^l)+{\color{black}\frac{1}{p}}\displaystyle\sum_{i=1}^px_i^jx_i^l\Big)\;.
\eeq
The needed  mean  $\mathfrak{m}$ is the inverse relative  to the group product (\ref{eq:H-type}), of   $u^{-1}$ with  components  $(u^k, u^{n+q})$ given         in   (\ref{solution-mean-k})  and  (\ref{solution-mean-n+q}), with  $k=1,\dots,n$ and  $q=1,\dots,m.$
We thus deduce the components $ (\mathfrak{m}^1,\dots,\mathfrak{m}^n, \mathfrak{m}^{n+1},\dots,\mathfrak{m}^{n+m})$ of $m$ as follows
\beq \mathfrak{m}^k=E(x^k)\;\; \text{ and } \mathfrak{m}^{n+q}=E(x^{n+q})\!\!+\!\!\frac{1}{4}\displaystyle\sum_{j,l=1}^p\gamma_{jl}^q\Big(
  E(x^j)E(x^l)-\frac{1}{p}\displaystyle\sum_{i=1}^px_i^jx_i^l\Big).\eeq
\end{proof}

\subsection{More discussions on the new model of parametric means}
In order to apply this mean to a cloud (dataset) of points in the Euclidean space  $\mathbb R^{N}$ for an integer $N\ge    2,$ we have partitioned $N$ into ($n,m$) with  $n+m=N.$    The choice of the partition ($n,m$) of  $N$ depends on the studied problem and is thus left to the user.         
           
Note that, for  each $k=1,\dots,n,$ the   $k$-th component  of   $\mathfrak{m}$,  is  the arithmetic  mean of  the $k$-th components of all the points in the  cloud
\beqn\mathfrak{m}^k =E(x_i^k):=\frac{1}{p}\sum\limits_ix_i^k,\eeqn                                                                                                                                                                                                                                                                                                                                                                                                                                                                                                                                                                               
whereas, for  each $q=1,\cdots,m,$ the $(n+q)$-th component  of   $\mathfrak{m}$,  is the sum of   the arithmetic  mean of  the $(n+q)$-th components of all the points in the  cloud  and a  linear combination of all the 
$E(x^j)E(x^l)-\frac{1}{p}\displaystyle\sum_{i=1}^px_i^jx_i^l$  weighted  by   the parameters $\gamma_{jl}^q.$   The terms $E(x^j)E(x^l)-{\color{black}\frac{1}{p}}\displaystyle\sum_{i=1}^px_i^jx_i^l$  are       related   to the   variance of the dataset,
as will be more explicitly explained in subsequent works.   More importantly, one can adjust, fix or estimate the parameters $\gamma_{jl}^q$ to better fit a problem at hand.  Even better yet, each fixed value of the parameter $\gamma=(\gamma_{jl}^q)$ defines a different $2$-nilpotent (simply connected) Lie group structure on $\mathbb R^{n+m}$ and lives in the smooth manifold of nilpotent Lie algebra structures on $\mathbb R^{n+m}.$ Since they (the $(\gamma_{jl}^q)$'s)   can vary smoothly, one can make them undergo differential or partial differential equations, series and limits, if the studied phenomenon requires so.      
One remarks that when the parameter vanishes, $\gamma_{jl}^q=0$ for any $j,l=1,\dots,n,$ $q=1,\dots,m,$ then  $\mathfrak{m}$ coincides with the arithmetic mean of the dataset ($\sigma_j$).                                                      
                              
Note also  that the  mean $\mathfrak{m}$ in (\ref{eq:mean-H-type-final})  is the common Riemannian mean of all the infinitely many Cartan-Schouten metrics (\ref{eq:metric-form-2}). So, one can choose the best metric to pair with $\mathfrak{m}$, depending on the studied problem. Given that the set of such metrics is also a smooth manifold of dimension   $\frac{1}{2}(m+n)(n+m+1)$ as insured by Theorem \ref{theo:SetOfCartan-SchoutenMetrics}), one has a  wide range of metrics to choose from.                                                                      
This flexibility of choice, fitting,...,  could be a good advantage over other traditional tools such as the ordinary expectation, the least square, etc.

\end{document}